\newtheorem{theorem}{Theorem}[section]
\newtheorem{lemma}[theorem]{Lemma}
\newtheorem{proposition}[theorem]{Proposition}
\newtheorem{definition}[theorem]{Definition}
\newtheorem{rem}[theorem]{Remark}
\newcommand{\eop}{\nopagebreak\hspace*{\fill}$\Box$\smallskip}
\newcommand{\N}{\Bbb N}
\newcommand{\R}{\Bbb R}
\newcommand{\C}{\Bbb C}
\newcommand{\wpr}{{w^\prime}}
\newcommand{\wprpr}{w^{\prime\prime}}
\newcommand{\twpr}{{\tilde w^\prime}}
\newcommand{\twprpr}{{\tilde w^{\prime\prime}}}
\DeclareMathOperator*{\argmin}{arg\,min}
\newcommand{\diff}{{\rm diff}}
\def\grad{\nabla}
\def\eps{\varepsilon}
\def\XXint#1#2#3{{\setbox0=\hbox{$#1{#2#3}{\int}$}
     \vcenter{\hbox{$#2#3$}}\kern-.5\wd0}}
\newcommand{\sym}{{\rm sym}}
\newcommand{\ZZZ}{\color{black}}
\newcommand{\MMM}{\color{black}} 
\newcommand{\NNN}{\color{black}} 
\newcommand{\BBB}{\color{black}} 
\newcommand{\EEE}{\color{black}} 
\newcommand{\RRR}{\color{black}}
 \newcommand{\PPP}{\color{black}} 
\newcommand{\second}{\color{black}}
\numberwithin{equation}{section}
\title{Derivation of a one-dimensional von K\'{a}rm\'{a}n theory for  viscoelastic ribbons}
\author{Manuel Friedrich}
\author{Lennart Machill}
\subjclass[2010]{74D10, 35A15, 35Q74, 49J45}
 \keywords{Viscoelasticity, metric gradient flows,  dimension reduction, $\Gamma$-convergence, dissipative distance, curves of maximal slope, minimizing movements.}
\address[Manuel Friedrich]{Applied Mathematics,  
Universit\"{a}t M\"{u}nster, Einsteinstr. 62, D-48149 M\"{u}nster, Germany}
\email{manuel.friedrich@uni-muenster.de}
\address[Lennart Machill]{Applied Mathematics,  
Universit\"{a}t M\"{u}nster, Einsteinstr. 62, D-48149 M\"{u}nster, Germany}
\email{lennart.machill@uni-muenster.de}
\begin{document}

\maketitle

\begin{abstract}
We consider a two-dimensional model of viscoelastic von K\'arm\'an plates in the Kelvin's-Voigt's rheology derived from a    three-dimensional model at a finite-strain setting in \cite{MFMKDimension}. As the width of the plate goes to zero, we perform a dimension-reduction  from 2D to 1D and identify an effective one-dimensional model for a viscoelastic ribbon comprising stretching, \EEE bending, \EEE and twisting both in the elastic and the viscous stress. Our arguments rely on the abstract theory of gradient flows in metric spaces by Sandier and Serfaty \cite{S1}  and complement the  $\Gamma$-convergence analysis of elastic von K\'{a}rm\'{a}n \MMM ribbons \EEE in \cite{Freddi2018}.  Besides convergence of the gradient flows, we also show convergence of associated time-discrete approximations, and we provide a corresponding commutativity result. 

\end{abstract}

\section{Introduction}

The derivation of effective theories for thin structures such as  plates, rods, or ribbons is a classical problem in continuum mechanics. Despite the long history of the subject with contributions already by Euler, Kirchhoff, and von K\'arm\'an (see \cite{Antmann:04, ciarlet2} for surveys), rigorous results  relating lower-dimensional theories  to three-dimensional elasticity have only been obtained comparably  recently. They were triggered by the use of variational methods, particularly by  $\Gamma$-convergence \cite{DalMaso:93} together with  quantitative  rigidity estimates \cite{FrieseckeJamesMueller:02}. In this present work, we are interested in effective descriptions for viscoelastic ribbons, i.e., bodies with three different length scales: a length $l$ which is much larger than the width $\eps$ which, in turn, is much larger than the thickness $h$. This difference in the characteristic dimensions  allows to model the material effectively as a one-dimensional continuum \cite{fried}.

In elasticity theory, the study of such models dates back to works by {\sc Sadowsky} \cite{hinz} and {\sc Wunderlich} \cite{Todres} who proposed and formally justified one-dimensional energies from a two-dimensional Kirchhoff plate model, corresponding  to $h=0$ and to the limiting passage $\eps \to 0$. Recently, {\sc Freddi, Hornung, Mora,  and Paroni} \cite{Mora3} gave  a rigorous justification of a corrected model and addressed also related effective descriptions derived from von K\'arm\'an plate models \cite{Freddi2018}. More generally, a hierarchy of  one-dimensional models has been derived from three-dimensional nonlinear elasticity by considering the simultaneous limit $h \ll \eps \to 0$ for appropriate rates $h/\eps$ \cite{Freddi2012, Freddi2013}. On the contrary, the assumption $h \sim \eps$ leads to different effective rod models identified in \cite{ABP, Mora, Mora2}. \EEE

 The present work is devoted to a similar scenario of deriving one-dimensional theories for \emph{viscoelastic} von K\'{a}rm\'{a}n ribbons. Let us start by considering a nonlinear  three-dimensional model  of a viscoelastic material with reference configuration $\Omega_{\eps,h} = (-\frac{l}{2}, \frac{l}{2}) \times (-\frac{\eps}{2}, \frac{\eps}{2}) \times (-\frac{h}{2}, \frac{h}{2}) $ at finite strains  in Kelvin's-Voigt's \EEE rheology, i.e.,   a spring and a damper coupled in parallel. Neglecting inertia, the nonlinear system of equations takes the form 
\begin{align}\label{eq:viscoel}
- \text{div} \bigg( \partial_F W(\grad y) + \partial_{\dot{F}} R(\grad y, \partial_t \grad y) \bigg) = f \text{ in } [0,T] \times \Omega_{\eps,h},
\end{align}
where $[0,T]$ is the process time interval and  $y\colon [0,T] \times \Omega_{\eps,h} \to \R^3$ denotes a deformation with gradient $\grad y$. The tensors $\partial_F W$ and $\partial_{\dot{F}} R$ correspond to the \emph{elastic stress} and the \emph{viscous stress}, respectively, and are related to a  stored energy density  $W$ as well as a  (pseudo-\EEE)potential $R$  of dissipative forces. (Here, $F$ and  $\dot{F}$ are placeholders for $\nabla y$ and $\partial_t \nabla y$, respectively.) Eventually, $f$ describes an external force.  In contrast to the rapidly developed static theory of nonlinear elasticity, due to the physically relevant assumptions on frame indifference for both $W$ and $R$ (see \cite{Antmann, Ball:77}), existence of solutions remains a challenging problem and results are scarce. We refer, e.g., to \cite{Lewick}  for  local in-time existence or to \cite{demoulini} for the existence of  measure-valued solutions. To date, weak solutions in a general finite strain setting \cite{MFMK, MielkeRoubicek} can only be guaranteed by using the concept of  second-grade nonsimple materials  where   the elastic properties additionally depend on the second gradient of the deformation \cite{Toupin:62, Toupin:64}. At the same time, let us mention \EEE that a main justification of the model investigated in \cite{MFMK} lies in the observation that,  in the small strain limit,  the problem leads to the standard system of linear viscoelasticity without second gradient.

Recently in \cite{MFMKDimension}, \BBB starting from a version of \eqref{eq:viscoel} for nonsimple materials, \EEE  a dimension \EEE reduction \EEE  has been performed to derive a von K\'arm\'an-like viscoelastic plate model on a two-dimensional plate $S_\eps = (-\frac{l}{2},\frac{l}{2}) \times  (-\frac{\eps}{2},\frac{\eps}{2})$. \BBB This complements the static $\Gamma$-convergence analysis for elastic materials \cite{hierarchy}, which \EEE has justified \EEE the von K\'arm\'an plate equations proposed more than 100 years ago \cite{VonKarman}. \EEE In contrast to previous works on viscoelastic plates \cite{Bock1, Bock2, Bock3, Park}, where the starting point is already a plate model, \cite{MFMKDimension} constitutes a rigorous derivation by proving that solutions to \eqref{eq:viscoel} converge in a suitable sense to effective two-dimensional equations. More precisely, there are  in-plane
\EEE displacements \EEE  $u\colon \EEE [0,T] \times \EEE S_\eps \to \R^2$  and out-of-plane displacements $v\colon \EEE [0,T] \times \EEE S_\eps \to \R$ such that
\begin{align}\label{eq: equation-simp-intro2}
\begin{cases} 
g^{2D} =  \EEE  - \EEE {\rm div}\Big(\C_W^2\big( e(u)  + \frac{1}{2} \nabla v \otimes \nabla v  \big) +  \C_R^2 \big( e(\partial_t u) +   \nabla \partial_t v    \otimes  \nabla v \big) \Big)  , &\vspace{0.1cm} \\ 
f^{2D} = -{\rm div}\Big(\Big(\C_W^2\big( e(u)  + \frac{1}{2} \nabla v \otimes \nabla v  \big)  +  \C_R^2 \big( e(\partial_t u) +   \nabla \partial_t v   \otimes \nabla v \big) \Big) \nabla v \Big)  &\vspace{0.1cm}\\
 \quad \quad   \ \ \   + \tfrac{1}{12}  {\rm div} \, {\rm div}\Big( \C_W^2 \nabla^2 v + \C_R^2 \nabla^2 \partial_t v \Big)    
\end{cases}
\end{align}
in $[0,T] \EEE \times S_\eps$, where $e(u) :=\frac{1}{2}( \grad u+ \grad u^T)$ denotes the linear strain tensor, and $\C_W^2$ as well as $\C_R^2$  are tensors of elasticity and viscosity coefficients, respectively, derived suitably from $W$ and $R$. In addition to the vertical force $f^{2D}$, we also consider a horizontal force $g^{2D}$ that was not included in \cite{MFMKDimension} for  simplicity. \EEE
 The first equation features a \emph{membrane term} both in the elastic and the viscous stress, and the second equation contains also a \emph{bending term}. The problem is closely related to the   \emph{von K\'arm\'an functional}  given by (neglecting the forces) \EEE
\begin{align*}
{\phi}_{\eps}(u,v) := \frac{1}{2} \int_{S_\eps} Q_W^2\Big( e(u) + \frac{1}{2} \nabla v \otimes \nabla v \Big)\, {\rm d}x + \frac{1}{24}\int_{S_\eps}Q_W^2(\nabla^2 v)\, {\rm d}x,
\end{align*}
where  $Q_W^2(F) = F: \ZZZ \C_W^2 \EEE F$ for $F \in \R^{2\times 2}$. Indeed, \eqref{eq: equation-simp-intro2} can  proved to be a (metric) gradient flow for ${\phi}_{\eps}$ for a metric suitably related to $\C_W^2$, see again \cite{MFMKDimension}.  This approach was additionally exploited in \cite{MFMKJV} for numerical approximation of the system \eqref{eq: equation-simp-intro2} via a minimizing movement scheme. We also refer to \cite{AMM} for a related dynamical problem considering inertial terms but no viscosity.

In order  to describe the one-dimensional effective behavior of  thin viscoelastic ribbons, the goal of the present paper is to perform another dimension reduction by letting the width  $\eps$  in  \eqref{eq: equation-simp-intro2} go to zero. In a purely static setting, this problem has been addressed in \cite{Freddi2018} by identifying the $\Gamma$-limit of the sequence $\frac{1}{\eps}\phi_\eps$ in terms of the \BBB non-convex \EEE functional
\begin{align}\label{def:enegeryphi-intro}
\phi_0(\xi_1,\xi_2,w,\theta):= &\frac{1}{2} \int_{-l/2}^{l/2} \ZZZ Q_W^0 \EEE \Big(\xi_1' + \frac{\vert w^{\prime}\vert^2}{2}\Big) \,  {\rm d}x_1 + \frac{1}{24} \int_{-l/2}^{l/2} \big( \ZZZ Q_W^0 \EEE (\xi_2'') + Q_W^1(w^{\prime\prime}, \theta^\prime) \big) \, {\rm d}x_1 
\end{align}
comprising stretching, bending, and twisting, where \ZZZ $Q_W^0$ and $Q_W^1$ are quadratic forms suitably related to $Q_W^2$. \EEE More precisely, the in-plane displacement $u$ can be related to an axial displacement $\xi_1$ and an orthogonal in-plane displacement $\xi_2$. In contrast, the out-of-plane displacement $v$ generates an out-of-plane displacement $w$, and the derivative of $v$ in the direction orthogonal to the axis leads to a  twist function $\theta$.
 
In the framework of viscoelastic ribbons, we relate the nonlinear equations \eqref{eq: equation-simp-intro2} to the following equations for viscoelastic ribbons
\begin{align} \label{eq: equation-simp-intro1}
\begin{cases}
g_1^{1D} = -  \mfrac{\rm d}{{\rm d}x_1} \bigg(C_W^0\Big(\xi_1^\prime+\frac{\vert w^\prime\vert^2}{2}\Big) + C_R^0(\partial_t \xi_1^\prime+w^\prime \partial_t w^\prime )   \bigg) , &\vspace{0.1cm}\\
g_2^{1D} = \mfrac{1}{12}   \mfrac{{\rm d}^2}{{\rm d}x_1^2}  \Big(C_W^0 \xi_2^{\prime\prime} +C_R^0 \partial_t \xi_2^{\prime\prime}   \Big) ,  &\vspace{0.1cm}\\
f^{1D}= - \mfrac{{\rm d}}{{\rm d}x_1} \bigg(\Big(C_W^0\Big(\xi_1^{\prime} + \frac{\vert \wpr \vert^2}{2}\Big) + C_R^0(\partial_t \xi_1^{\prime} + w^\prime \partial_t w^\prime) \Big) w^\prime \bigg)  &\vspace{0.1cm}\\
\ \ \ \ \ \ \ \ \ \ \ +
\frac{1}{24} \mfrac{{\rm d}^2}{{\rm d}x_1^2}  \Big(\partial_1 Q_W^1(w^{\prime\prime},\theta^\prime)+\partial_1Q_R^1(\partial_t\wprpr,\partial_t \theta^\prime )\Big), &\vspace{0.1cm}\\
0 =  \mfrac{{\rm d}}{{\rm d}x_1} \Big(\partial_2 Q_W^1(w^{\prime\prime},\theta^\prime)+\partial_2 Q_R^1(\partial_t \wprpr,\partial_t \theta^\prime)\Big)
\end{cases}
\end{align}
in $[0,T] \EEE  \times (-\frac{l}{2},\frac{l}{2})$, \RRR where the \EEE constants $C_W^0>0$ and $C_R^0>0$, and the quadratic form $Q_R^1$ are again related to $Q_W^2$ and $Q_R^2$, respectively. \RRR Moreover, \EEE $f^{1D}$, $g^{1D}$ are forces derived from $f^{2D}$, $g^{2D}$. \RRR Note that the equations are again given in divergence form. \EEE  More precisely, we prove  existence \BBB of \EEE solutions to \eqref{eq: equation-simp-intro1} and make the dimension reduction rigorous, i.e., we show that solutions to  \eqref{eq: equation-simp-intro2}  converge to solutions of \eqref{eq: equation-simp-intro1} in a specific sense. The solutions have to be understood in a weak sense, see \eqref{eq:weak1d} for the exact definition.  The same property holds for time-discrete approximations, and we provide a corresponding commutativity result, see Figure \ref{diagram}. 

Heuristically, \eqref{eq: equation-simp-intro1} can be understood as the effective equation of a thin-walled beam with reference configuration $\Omega_{\eps,h}$ governed by \eqref{eq:viscoel}, when we first let $h \to 0$ and then $\eps \to 0$. In a forthcoming work, we will make this intuition rigorous by studying simultaneous limits $h \ll \eps \to 0$, \EEE complementing the purely elastic analysis in \cite{Freddi2013}. Let us mention that in \cite{Freddi2018} also other energy regimes have been  considered, leading to a ``linearized'' von K\'arm\'an or a  ``constrained'' von K\'arm\'an  energy. Whereas the former case is completely covered by our analysis, the latter is \ZZZ subtler \EEE due to the nonlinear constraint $\det(\nabla^2 v) = 0$ in the two-dimensional setting.

Our general strategy is to treat the systems \eqref{eq: equation-simp-intro2} and \eqref{eq: equation-simp-intro1} in the abstract setting of metric gradient flows \cite{AGS} for the energies $\phi_\eps$ and $\phi_0$, respectively, where the underlying metric is given by a \emph{dissipation distance} suitably related to  $\C_R^2$, \EEE  $\partial_1 Q_R^1$, $\partial_2 Q_R^1$, \EEE and $C_R^0$.  (We also refer to \cite{MOS} for a thorough explanation to this approach.)  We follow the approach of \emph{evolutionary $\Gamma$-convergence} devised in  \cite{Mielke, Ortner, S1,S2}.  In using this theory, the challenge lies in proving
that the additional conditions needed to ensure convergence of gradient flows are satisfied.

More specifically, to use the  abstract convergence result, lower semicontinuity of the energies, the metrics, and the local slopes is needed. The estimate for the energies and metrics essentially follows from \cite{Freddi2018}, \ZZZ see \EEE Theorems \ref{th: Gamma} and \ref{th: lscD} below.  The lower semicontinuity of the local slopes, however, is more technical and the core of our argument. We briefly present the main idea in the one-dimensional setting.  The \emph{local slope}  of  $\phi_0$ in a metric space with metric $\mathcal{D}_0$ is defined by
$$|\partial \phi_0|_{\mathcal{D}_0}(z): = \limsup_{\hat z \to z} \frac{(\phi_0(z) - \phi_0(\hat z))^+}{\mathcal{D}_0(z,\hat z)}.$$
(To simplify the notation here, we use a single variable $z$ in place of $(\xi_1,\xi_2,w,\theta)$, \ZZZ see \EEE \eqref{def:enegeryphi-intro}.) We use a finer representation of the local slope, based on generalized convexity properties, to show that the local slope coincides with the \emph{global slope}, see \cite[Definition 1.2.4]{AGS}, up to some lower order terms. Essentially, this shows 
$|\partial \phi_0|_{\mathcal{D}_0}(z) \approx \frac{(\phi_0(z) - \phi_0(\tilde{z}))^+}{\mathcal{D}_0(z,\tilde{z})}$ \EEE for some $\tilde{z}$. \EEE Consider a sequence  $(z_n)_{n \in \N}$   converging to a limit $z$.  The main step consists in constructing a \emph{mutual recovery sequence} $(\tilde{z}_n)_{n \in \N}$ such that $\phi_0(z_n) - \phi_0(\tilde{z}_n) \to \phi_0(z) - \phi_0(\tilde{z})$ and $\mathcal{D}_0(z_n, \tilde z_n) \to \mathcal{D}_0(z, \tilde z)$, \BBB see Lemma \ref{lem:mutualrecovery}. \EEE The strategy to prove the lower semicontinuity of local slopes along the sequence $\phi_\eps$ is similar, but the construction of mutual recovery sequences (see Lemma \ref{lem:mutual}) is \ZZZ subtler \EEE as suitable compatibility conditions between the elastic energy and the viscous dissipation are needed. We consider different conditions in that direction, ranging from materials with small Poisson ratio to vanishing dissipation potentials in the direction of the width $\eps$,  see Subsection~\ref{sec: compatibility} for details. \EEE  Let us emphasize that mutual recovery sequences are also crucial to perform the limiting passage on the time-discrete level, see Theorem~\ref{thm:gammaofscheme}.

The plan of the paper is as follows. In Section 2, we introduce the one- and two-dimensional models and state our main results.  The main goal is to prove the existence of solutions to the one-dimensional model, which is based on gradient flows in metric spaces  \cite{AGS}. In particular, Theorem~\ref{thm:curveofmaximalslope:energyidentity}(i) provides the existence of \EEE solutions \EEE to the one-dimensional gradient flow by relying on the theory of generalized minimizing movements. Moreover, Theorem~\ref{thm:curveofmaximalslope:energyidentity}(ii) identifies solutions of the gradient flow as weak solutions of the one-dimensional system of PDEs \EEE \eqref{eq: equation-simp-intro1}. \EEE Finally, Theorem~\ref{thm:relation2d1d} \EEE addresses \EEE the relation to the two-dimensional system. Besides the convergence of two-dimensional solutions to the one-dimensional solutions, we also get analogous results \EEE for the semidiscretized problems. In particular, \EEE convergences for vanishing time step and vanishing width of the \EEE plate commute, see Figure~\ref{diagram}. \EEE Section 3 is devoted to definitions concerning the theory of gradient flows in metric spaces. In particular, we recall the approximation scheme and also collect the necessary existence results for \emph{curves of maximal slopes} \cite{AGS, DGMT}. \EEE While Section 4 collects separately the main  properties of the two- and one-dimensional systems,  Section 5 addresses the relation between the two systems. Finally, proofs of the main results can be found in Section 6, and several technical proofs are postponed to the  Appendix \ref{sec:Appendix}.

\BBB \subsection*{Notation} \EEE
 In what follows, we use standard notation for Lebesgue spaces,  $L^p(\Omega)$, which are measurable maps on $\Omega\subset\R^d$, $d=1,2$, \EEE integrable with the $p$-th power (if $1\le p<+\infty$) or essentially bounded (if $p=+\infty$).    Sobolev spaces, i.e., $W^{k,p}(\Omega)$ denote the linear spaces of  maps  which, together with their weak derivatives up to the order $k\in\N$, belong to $L^p(\Omega)$.  Further,  $W^{k,p}_0(\Omega)$ contains maps from $W^{k,p}(\Omega)$ having zero boundary conditions (in the sense of traces).   Moreover, for a function $ v \in W^{k,p}(\Omega)$, \EEE the set $W^{k,p}_{v}(\Omega)$ contains maps from $W^{k,p}(\Omega)$ attaining $v$ at the boundary  \EEE  (in the sense of traces) up to the \EEE $(k-1)$-th \EEE order. To emphasize the target space $E$, we write $W^{k,p}(\Omega;E)$. If $E = \R$, we write $W^{k,p}(\Omega)$ as usual. \EEE  We refer to \cite{AdamsFournier:05} for more details on Sobolev spaces and their duals.  If the integration variable is clear from the context, we \EEE usually \EEE drop ${\rm d}x$ at the end of integrals. \EEE

\section{The model and main results}\label{sec:Model}

\subsection{The two-dimensional setting}\label{sec: two-d}

In this subsection we describe the two-dimensional von K\' arm\' an plate model. Fixing an interval $I = (-\frac{l}{2}, \frac{l}{2})$, the set $S_\eps:= I \times (-\frac{\eps}{2}, \frac{\eps}{2})$ represents a two-dimensional reference configuration of a two-dimensional plate, where $l>0$ denotes the length and $\eps>0$ the width. For $u \in W^{1,2}(S_\eps;\R^2)$ and $v \in W^{2,2}(S_\eps;\R)$ we define the \emph{von K\'arm\'an} functional by \EEE
\begin{align}\label{def:vK2D}
{\phi}_\eps(u,v) := \frac{1}{\eps}\int_{S_\eps} \frac{1}{2}Q_W^2\Big( e(u) + \frac{1}{2} \nabla v \otimes \nabla v \Big) + \frac{1}{\eps}\int_{S_\eps} \frac{1}{24}Q_W^2(\nabla^2 v) - \frac{1}{\eps} \int_{S_\eps} \big( f^{2D}_\eps v + g^{2D}_\eps \cdot \EEE u \big) \EEE,
\end{align}
where $e(u):= \frac{1}{2} (\nabla u + \nabla u^T)$ denotes the linear strain tensor and $\otimes$ the \EEE Euclidean \EEE tensor product. \EEE Moreover, \EEE $Q_W^2$ denotes a quadratic form on $\R^{2 \times 2}$, related to the \BBB tensor \EEE $\C_W^2$  in \eqref{eq: equation-simp-intro2}  via the mapping \BBB $A \mapsto Q_W^2(A) = A^T\C_W^2 A$. \EEE The quadratic form is derived from  a frame-indifferent energy density $W$. \EEE Therefore, it \ZZZ only depends \EEE on the symmetric part of a matrix  $A \in \R^{2\times 2}$, i.e., on $\frac{1}{2} (A+A^T)$, \EEE and \EEE is positive definite on $\R^{2 \times 2}_{\rm sym}$. \EEE  This functional describes the energy of \BBB a two-dimensional plate deformed in three-dimensional \EEE space, where $u$ and $v$ correspond to in-plane and out-of-plane displacements, respectively. The energy comprises both a \emph{membrane term} depending on $u$ and $v$, as well as a \emph{bending term},   only depending \EEE on $v$. \EEE Eventually, \EEE the functions $f^{2D}_\eps \in L^2(S_\eps)$ and \BBB $g^{2D}_\eps \in L^2(S_\eps;\R^2)$ \EEE correspond to a vertical force density and a horizontal force density, respectively.

We also introduce a \emph{global dissipation distance} $\mathcal{D}_\eps((u,v),(\tilde u, \tilde v))$ between two displacements \EEE $(u,v)$, $(\tilde u , \tilde v) \in W^{1,2}(S_\eps; \R^2) \times W^{2,2}(S_\eps)$  whose square is given by
\begin{align}\label{eq: metriceps}
\mathcal{D}_\eps((u,v),(\tilde u, \tilde v))^2 = \frac{1}{\eps} \int_{S_\eps} Q_R^2\Big(e(\tilde u)- e(u) + \frac{1}{2} \grad \tilde v \otimes \grad \tilde v - \frac{1}{2} \grad v \otimes \grad v \Big)  +  \frac{1}{12\eps} \int_{S_\eps} Q^2_R\big(\grad^2 \tilde v - \grad^2 v \big).
\end{align}
Here, $Q_R^2$ is a quadratic form on $\R^{2 \times 2}$, positive definite on $\R^{2 \times 2}_{\rm sym}$, which corresponds to the tensor \EEE $\C_R^2$  \EEE in \eqref{eq: equation-simp-intro2} and is derived from a \EEE nonlinear \EEE viscous dissipation potential $R$, \EEE see \eqref{eq:viscoel}. \EEE

As shown in \cite{MFMKDimension, MFMKJV}, the metric gradient flow of $\phi_\eps$ with respect to the metric $\mathcal{D}_\eps$, for a given initial datum $(u_0,v_0) \in W^{1,2}(S_\eps; \R^2) \times W^{2,2}(S_\eps)$, corresponds to the \emph{2D equations for viscoelastic  von K\' arm\' an plates} \EEE
\begin{align}\label{eq: equation-simp}
\begin{cases} \EEE - \EEE {\rm div}\Big(\C^2_W\big( e(u)  + \frac{1}{2} \nabla v \otimes \nabla v  \big) +  \C^2_R \big( e(\partial_t u) +   \nabla \partial_t v   \odot \nabla v \big) \Big)   = g^{2D}_\eps, &\vspace{0.1cm} \\ 
-{\rm div}\Big(\Big(\C^2_W\big( e(u)  + \frac{1}{2} \nabla v \otimes \nabla v  \big)  +  \C^2_R \big( e(\partial_t u) +   \nabla \partial_t v   \odot \nabla v \big) \Big) \nabla v \Big)  &\vspace{0.1cm}\\
\quad\quad\quad\quad\quad\quad\quad \quad \ \, \quad \ \  \quad  + \tfrac{1}{12}  {\rm div} \, {\rm div}\Big( \C^2_W \nabla^2 v + \C^2_R \nabla^2 \partial_t v \Big)   =   f^{2D}_\eps  &   \text{in } [0,\infty) \times S_\eps, \\
u(0,\cdot) = u_0, \  v(0,\cdot) = v_0 &   \text{in } S_\eps, \\
\end{cases}
\end{align}
 where $\odot$ is the symmetrized tensor product and $\rm div$ denotes the distributional divergence. The existence of solutions to \eqref{eq: equation-simp} complemented with Dirichlet boundary conditions on $\partial S_\eps$  has been addressed in \cite{MFMKDimension, MFMKJV}. For performing the dimension reduction, we instead only impose boundary conditions on the lateral boundaries $\partial I \times (-\tfrac{\eps}{2}, \tfrac{\eps}{2})$. More precisely, given functions $\hat u_1 \in W^{1,2}(I)$, $\hat u_2 \in W^{2,2}(I)$, and $\hat v \in W^{2,2}(I)$, we \EEE define the space of admissible functions by
\begin{align}\label{admissiblefunctions2Dboundary}
{\mathscr{S}}^{2D}_\eps := \Big\{&(u,v) \in W^{1,2} (S_\eps; \R^2 ) \times W^{2,2} \BBB(S_\eps)\EEE: \quad  \\
&u(x) = (\hat u_1(x_1) - x_2 \hat u_2'(x_1), \tfrac{1}{\eps}\hat u_2(x_1)),\   v(x) = \hat v(x_1), \  \partial_1 v(x) = \partial_1 \hat v(x_1) \text{ on } \partial I \times (-\tfrac{\eps}{2}, \tfrac{\eps}{2})
\Big\}. \nonumber 
\end{align}
Note that the proof in \cite{MFMKDimension, MFMKJV} can still be performed under \eqref{admissiblefunctions2Dboundary} up to minor adjustments, but the equations \eqref{eq: equation-simp} need to be complemented with zero Neumann boundary conditions on the top and the bottom of $S_\eps$. As we will see below, however, they do not affect the effective 1D model. The structure of the conditions on $u$ \EEE is \EEE related to the space of Bernoulli-Navier functions, see \eqref{def:BN} below. 

We say that $(u,v) \in W^{1,2}([0,\infty);\BBB \mathscr{S}^{2D}_\eps) \EEE$ is a \emph{weak solution} of \eqref{eq: equation-simp}  if $u(0,\cdot) = u_0$, $v(0,\cdot) = v_0$ and for a.e.\ $t \ge 0$ we have \NNN
\begin{subequations}\label{eq: weak equation}
	\begin{align}
	& \int_{S_\eps} \Big(\C^2_W\big( e(u)  + \tfrac{1}{2} \nabla v \otimes \nabla v  \big)  +  \C^2_R \big( e(\partial_t u ) +  \nabla \partial_t v   \odot  \nabla v \big) \Big) : \nabla \phi_u    = \int_{S_\eps} g^{2D}_\eps \EEE \cdot \EEE \phi_u,\label{eq: weak equation1} \\
	& \int_{S_\eps} \Big(\C^2_W\big( e(u)  + \tfrac{1}{2} \nabla v \otimes \nabla v  \big)  +  \C^2_R \big( e(\partial_t u ) +  \nabla \partial_t v  \odot\nabla v \big) \Big) : \big(\nabla v  \odot  \nabla \phi_v  \big) \notag \\
	& \quad\quad\quad\quad\quad\quad\quad \ \  \ \  \, \,\quad  + \frac{1}{12}  \int_{S_\eps} \Big(\C^2_W \nabla^2 v + \C^2_R \nabla^2 \partial_t v \Big) : \nabla^2 \phi_v = \int_{S_\eps} f^{2D}_\eps\phi_v \label{eq: weak equation2} 
	\end{align}
\end{subequations} 
for all $(\phi_u, \phi_v) \in \mathscr{S}^0_\eps$, where $\mathscr{S}^0_\eps$ is defined as in \eqref{admissiblefunctions2Dboundary} for $\hat{u}_1 = \hat{u}_2 = \hat{v} = 0$. Note that \eqref{eq: weak equation1} corresponds \second to two scalar equations and \eqref{eq: weak equation2} corresponds to one \second  scalar \rm \EEE equation, respectively.

\smallskip

\subsection{Compactness and limiting variables}\label{topologycompactness} For the limiting passage, \EEE it is more convenient to work on a fixed domain that does not depend on $\eps$. To this end, \EEE we introduce the set $S:= I \times (-\frac{1}{2},\frac{1}{2})$ and define the scaled versions $y \in W^{1,2}(S;\R^2)$ and $w \in \EEE W^{2,2}(S)\EEE$ by
\begin{align}
y_1(x_1,x_2):= u_1(x_1, \eps x_2),\quad y_2(x_1,x_2):= \eps u_2(x_1,\eps x_2), \quad w(x_1,x_2):= v(x_1, \eps x_2), \label{scaledfunctions}
\end{align}
and the scaled differential operators \BBB $E^\eps$, $\grad_\eps$, $\grad^2_\eps$ by \EEE
\begin{align*}
E^\eps y:= \begin{pmatrix}
\partial_1 y_1 & \frac{1}{2\eps}(\partial_1 y_2+ \partial_2 y_1) \\ 
\frac{1}{2\eps}(\partial_2 y_1 + \partial_1 y_2) & \frac{1}{\eps^2}\partial_2 y_2 \\ 
\end{pmatrix},
\end{align*}
\begin{align}
\grad_\eps w := \Big( \partial_1 w, \frac{1}{\eps} \partial_2 w \Big) \quad \text{ and }\quad  \grad^2_\eps w := \begin{pmatrix}
\partial_{11}^2 w & \frac{1}{\eps} \partial_{12}^2 w \\
\frac{1}{\eps} \partial_{21}^2 w & \frac{1}{\eps^2} \partial_{22}^2 w
\end{pmatrix}. \label{scaledoperators}
\end{align}
Thus, by the chain rule we have $$E^\eps y(x) =  e(u) (x_1,\eps x_2),\quad \grad_\eps w (x) = \grad v(x_1,\eps x_2), \quad \grad^2_\eps w (x) = \grad^2 v(x_1,\eps x_2).$$
Similarly, we define the scaled forces $\hat{f}^{2D}_{\eps}\colon S \to \R$ and $\hat{g}^{2D}_{\eps} \colon S \to \R^2$ by
$\hat{f}^{2D}_\eps(x_1,x_2) = f^{2D}_\eps(x_1, \eps x_2)$ and $\hat{g}^{2D}_\eps(x_1,x_2) = g^{2D}_\eps(x_1, \eps x_2)$
and assume that the scaled versions satisfy
\begin{align}\label{eq: forces}
\hat{f}^{2D}_\eps \rightharpoonup f^{1D},\qquad 	\hat{g}^{2D}_{\eps,1} \rightharpoonup g_1^{1D}, \qquad \frac{1}{\eps} \hat{g}^{2D}_{\eps,2} \rightharpoonup g_2^{1D} \qquad \text{weakly in } L^2(S)
\end{align}
for functions $f^{1D} \in L^2(I)$ and $g^{1D} \in L^2(I;\R^2)$. Here and in the following, with a slight abuse of notation, we regard all functions defined on $I$ as functions on $S$ which do not \BBB depend \EEE explicitly on the variable $x_2$.

Starting from the variables $y$ and $w$ in dimension two, we now \EEE introduce \EEE corresponding limiting variables in the one-dimensional setting. \BBB  We will identify the limit of the in-plane displacements with two-dimensional Bernoulli-Navier functions \EEE defined by

%\begin{align}
% BN_\lr(S,\R^2):= &\{ g \in W_\lr^{1,2}(S, \R^2) : (Eg)_{12} = (Eg)_{21} = (Eg)_{22} = 0 \} \label{def:BN} \\
%= &\{ g \in W_\lr^{1,2}(S, \R^2) : \exists \xi_1 \in W_\lr^{1,2}(I) \text{ and } \xi_2 \in W_\lr^{1,2}(I) \cap W^{2,2}(I)  \text{ such that}  \nonumber\\
% &\quad g_1(x) = \xi_1(x_1) - x_2 \xi_2^\prime (x_1),\quad  g_2(x) = \xi_2(x_1)\}, \nonumber
%\end{align}
%where the second characterization can be derived as in \cite{???}.

\begin{align*}
BN_{ (\hat u_1,\hat u_2)}(S;\R^2):= &\Big\{ y \in W^{1,2}(S; \R^2): e(y)_{12} = e(y)_{22} = 0 ,  \quad y_1 = \hat u_1 - x_2 \hat u_2', \quad y_2 = \hat u_2  \ \text{ on } \Gamma \Big\},
\end{align*}
where for shorthand, we set $\Gamma:=\partial I \times (-\tfrac{1}{2}, \tfrac{1}{2})$. Compared to \cite{Freddi2018}, this function space is different since in our analysis we consider functions with boundary values instead of functions with vanishing mean. By arguing analogously to \cite[Theorem 4.1]{ABJMV}, the space of Bernoulli-Navier functions can be identified with \EEE  functions defined \EEE on \EEE   $I$, namely
\begin{align}\label{def:BN}
BN_{(\hat u_1,\hat u_2)}(S;\R^2)= & \Big\{ y \in W^{1,2}(S; \R^2) : \exists \,   \xi_1 \in W_{\hat u_1}^{1,2}(I), \, \exists \,  \xi_2 \in  W_{\hat u_2}^{2,2}(I) \, \text{  such that}  \nonumber\\
& \ \  \quad \quad \quad \quad \quad \quad \quad   y_1(x) = \xi_1(x_1) - x_2 \xi_2^\prime (x_1), \ \  y_2(x) = \xi_2(x_1) \Big\}. 
\end{align}
Here, it is worth noting that the second component has a higher regularity and that $\xi_2' = \hat{u}_2'$ on $\partial I$ as $ \xi_1 - x_2 \xi_2^\prime = \hat u_1 - x_2 \hat u_2'$ on $\partial I$ for a.e.\ $x_2 \in (-\frac{1}{2},\frac{1}{2})$. We recall the scalings \eqref{scaledfunctions} and \eqref{scaledoperators}\ZZZ, and denote by ${\mathscr{S}}^{2D}_{\eps,M} := \{(u,v) \in \BBB \mathscr{S}^{2D}_\eps \EEE \colon \, {\phi}_\eps(u,v) \leq M \}$ the sublevel sets of the energy.\EEE 

\begin{proposition}[Compactness]\label{thm:compactness} Let $(u_\eps,v_\eps)_\eps$ be a sequence such that $(u_\eps,v_\eps) \in \mathscr{S}^{2D}_{\eps,M}$ for $M>0$. Let $(y_\eps,w_\eps)_\eps$ be the scaled sequence in the sense of \eqref{scaledfunctions}. 
Then, up to a subsequence, there exists a vertical displacement $w \in W_{\hat v}^{2,2}(I)$, a twist function $\theta \in W_{0}^{1,2}(I)$, and a horizontal displacement $y \in BN_{(\hat u_1,\hat u_2)}(S;\R^2)$ such that
\begin{align*}
&w_\eps \rightharpoonup w \text{ in } W^{2,2}(S), \quad \grad_\eps w_\eps \rightharpoonup (w',\theta) \text{ in } W^{1,2}(S;\R^2),\\
&\grad_\eps^2 w_\eps \rightharpoonup \begin{pmatrix}
w'' & \theta' \\
\theta' & \gamma\\
\end{pmatrix} \text{ in } L^2(S; \R^{2\times 2}_{{\rm sym}})
\end{align*}
for a suitable $\gamma \in L^2(S)$ and
\begin{align*}
y_\eps \rightharpoonup y \text{ in } W^{1,2}(S;\R^2) \quad \text{ and } \quad E^\eps y_\eps \rightharpoonup E \text{ in } L^2(S;\R^{2 \times 2}_{\rm sym})
\end{align*}
for a suitable $E\in L^2(S;\R^{2\times 2}_{\rm sym})$ such that $E_{11} = \partial_1 y_1$.
\end{proposition}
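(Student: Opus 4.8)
The plan is to extract, one family of variables at a time, weak limits from the uniform energy bound $\phi_\eps(u_\eps,v_\eps)\le M$ and then to identify the structure of those limits. First I would rewrite everything in terms of the scaled quantities: by the change of variables \eqref{scaledfunctions}--\eqref{scaledoperators} and the positive definiteness of $Q_W^2$ on $\R^{2\times 2}_{\rm sym}$, the bound $\phi_\eps(u_\eps,v_\eps)\le M$ controls $\frac12\int_S Q_W^2(E^\eps y_\eps+\frac12\grad_\eps w_\eps\otimes\grad_\eps w_\eps)$ and $\frac{1}{24}\int_S Q_W^2(\grad^2_\eps w_\eps)$, plus the force terms, which are harmless thanks to \eqref{eq: forces} and Young's inequality. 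Hence $\|\grad^2_\eps w_\eps\|_{L^2(S)}$ is bounded, which in particular bounds $\|\partial^2_{11}w_\eps\|_{L^2}$, $\frac1\eps\|\partial^2_{12}w_\eps\|_{L^2}$, $\frac1{\eps^2}\|\partial^2_{22}w_\eps\|_{L^2}$. Combined with the boundary conditions encoded in $W^{2,2}_{\hat v}$ (via the definition of $\mathscr S^{2D}_\eps$ and a Poincaré inequality), $w_\eps$ is bounded in $W^{2,2}(S)$, so up to a subsequence $w_\eps\rightharpoonup w$ in $W^{2,2}(S)$; since $\frac1\eps\partial_2 w_\eps$ is bounded while $\partial_2 w_\eps\to\partial_2 w$ we get $\partial_2 w=0$, i.e.\ $w=w(x_1)\in W^{2,2}_{\hat v}(I)$.

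Next I would treat the scaled gradient $\grad_\eps w_\eps=(\partial_1 w_\eps,\frac1\eps\partial_2 w_\eps)$. Its first component converges weakly in $W^{1,2}(S)$ to $w'$. Its second component $\frac1\eps\partial_2 w_\eps$ is bounded in $W^{1,2}(S)$ because $\partial_1(\frac1\eps\partial_2 w_\eps)=\frac1\eps\partial^2_{12}w_\eps$ and $\partial_2(\frac1\eps\partial_2 w_\eps)=\frac1{\eps^2}\partial^2_{22}w_\eps$ are both bounded in $L^2(S)$; hence it converges weakly to some $\theta$ with $\partial_2\theta=0$, so $\theta\in W^{1,2}(I)$, and the boundary condition $\partial_1 v=\partial_1\hat v$ on $\partial I\times(-\tfrac\eps2,\tfrac\eps2)$ translates (after scaling) into $\frac1\eps\partial_2 w_\eps=0$ on $\Gamma$, giving $\theta\in W^{1,2}_0(I)$. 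Passing to the limit in $\grad^2_\eps w_\eps$: the $(1,1)$ entry converges to $w''$, the $(1,2)$ and $(2,1)$ entries, being $\frac1\eps\partial^2_{12}w_\eps=\partial_1(\frac1\eps\partial_2 w_\eps)\rightharpoonup\theta'$, and the $(2,2)$ entry converges to some $\gamma\in L^2(S)$; this yields the stated matrix limit in $L^2(S;\R^{2\times2}_{\rm sym})$.

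For the in-plane part, the subtlety is that the energy controls only $E^\eps y_\eps+\frac12\grad_\eps w_\eps\otimes\grad_\eps w_\eps$, not $E^\eps y_\eps$ itself; but $\grad_\eps w_\eps$ is now bounded in $L^2$ (indeed in $W^{1,2}$), hence $\grad_\eps w_\eps\otimes\grad_\eps w_\eps$ is bounded in $L^1$ — and in fact in $L^2$ by Sobolev embedding of $W^{1,2}(S)\hookrightarrow L^4(S)$ in two dimensions — so $E^\eps y_\eps$ is bounded in $L^2(S;\R^{2\times2}_{\rm sym})$. From the definition of $E^\eps$ this bounds $\|\partial_1 y_{\eps,1}\|_{L^2}$, $\frac1\eps\|\partial_1 y_{\eps,2}+\partial_2 y_{\eps,1}\|_{L^2}$, $\frac1{\eps^2}\|\partial_2 y_{\eps,2}\|_{L^2}$; together with the boundary conditions this is exactly the situation of a Bernoulli--Navier compactness lemma. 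Invoking the identification \eqref{def:BN} (i.e.\ arguing as in \cite[Theorem 4.1]{ABJMV}, or directly: $\partial_2 y_{\eps,2}\to0$ and $\partial_1 y_{\eps,2}+\partial_2 y_{\eps,1}\to0$ force the limit $y$ to satisfy $e(y)_{22}=e(y)_{12}=0$ a.e.), I get $y_\eps\rightharpoonup y\in BN_{(\hat u_1,\hat u_2)}(S;\R^2)$ in $W^{1,2}$ and $E^\eps y_\eps\rightharpoonup E$ in $L^2(S;\R^{2\times2}_{\rm sym})$ for some $E$, whose $(1,1)$ entry is $\lim\partial_1 y_{\eps,1}=\partial_1 y_1$ by weak continuity of $\partial_1$.

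The main obstacle is the in-plane estimate: one must first establish the $L^2$-bound on $\grad_\eps w_\eps\otimes\grad_\eps w_\eps$ (which needs the two-dimensional Sobolev embedding applied to $\grad_\eps w_\eps\in W^{1,2}(S)$, already obtained above) in order to decouple $E^\eps y_\eps$ from the membrane term, and then to correctly handle the boundary conditions in the Bernoulli--Navier identification, since our boundary data differ from the zero-mean normalization of \cite{Freddi2018}. Everything else is a routine combination of weak compactness in Hilbert spaces, the Rellich--Kondrachov theorem, Poincaré inequalities adapted to the boundary conditions, and weak lower semicontinuity; the only care needed is to keep track of which powers of $\eps$ survive in the limit and to extract the subsequence once and for all (diagonal argument) so that all stated convergences hold simultaneously.
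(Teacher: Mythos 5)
Your proposal is correct and takes essentially the same route as the paper, which omits the proof and refers to \cite[Lemma 2.1]{Freddi2018}, adapted---exactly as you do---via Poincar\'e/Korn-type inequalities for functions with prescribed traces, the Sobolev embedding to decouple $E^\eps y_\eps$ from the membrane term, and a check that the limits inherit the boundary conditions. One harmless slip: $\partial_2\big(\tfrac1\eps\partial_2 w_\eps\big)=\tfrac1\eps\partial^2_{22}w_\eps$, not $\tfrac1{\eps^2}\partial^2_{22}w_\eps$, but the required $L^2$-bound follows anyway from the bound on $\tfrac1{\eps^2}\partial^2_{22}w_\eps$.
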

 The proof is omitted as it closely follows the lines of \cite[Lemma 2.1]{Freddi2018} where functions with vanishing mean have been considered. In fact, the only difference lies in using suitable versions of Poincaré's and Korn's  inequality for functions with given trace, and in checking that the limits satisfy the boundary conditions. To see the latter, it suffices to observe that $y_\eps(x) = (\hat u_1(x_1) - x_2 \hat u_2'(x_1), \EEE \hat{u}_2 \EEE (x_1) )$, $w_\eps(x) = \hat v(x_1)$,  and $\frac{1}{\eps}\partial_2 w_\eps =0$ on $\Gamma$, \EEE see \eqref{admissiblefunctions2Dboundary} and \eqref{scaledfunctions}. \EEE Here, our choice of the boundary values in \eqref{admissiblefunctions2Dboundary} becomes apparent since it guarantees that the limit of the in-plane \BBB displacements can \EEE be identified with functions in \eqref{def:BN}. We also refer to \cite{lecumberry}, where clamped boundary conditions in a related context are \EEE considered. \EEE Later we  will see that the compactness result also holds in the time-dependent setting along solutions to \eqref{eq: weak equation}. \EEE

\subsection{Effective quadratic forms and compatibility conditions}\label{sec: compatibility} As a preparation for the formulation of the one-dimensional model, we introduce effective quadratic forms related to $Q^2_S$, $S=W,R$, introduced in Subsection \ref{sec: two-d}.  Recall that \EEE  $Q_W^2$ and $Q_R^2$ are defined on $\R^{2 \times 2 }$ which depend only on symmetric matrices and can thus be identified   with functions defined on $\R^3$ via $(q_{11},q_{12},q_{22}) \simeq {\footnotesize \begin{pmatrix}
q_{11} & q_{12} \\ 
q_{12} & q_{22}\end{pmatrix}}$. For the sake of readability, we use both types of notation in the sequel. We define reduced quadratic forms by minimizing of the second and third entry. More precisely, we let
\begin{align}
&Q_S^1(q_{11}, q_{12}) := \min\limits_{\alpha \in \R}  Q_S^2(q_{11}, \EEE  q_{12}, \EEE \alpha) \label{quadraticform1d}
\end{align}
for $(q_{11}, q_{12}) \in \R^2$ and 
\begin{align}
Q_S^0( q_{11} ):= \min\limits_{z \in \R} Q_S^1(q_{11},z) = C_S^0 q_{11}^2 \label{quadraticform0d}
\end{align}
for $q_{11} \in \R$ and a suitable constant $C_S^0>0$. We denote by $ \C_W^1$ and $ \C_R^1$ the corresponding second-order tensors. 

To perform a rigorous evolutionary dimension reduction, we require some compatibility conditions of the quadratic forms $Q_W^2$ and $Q_R^2$ as we need to construct  \emph{mutual \EEE recovery sequences}, compatible for the \EEE elastic energy and the viscous dissipation at the same time (see Theorems \ref{theorem: lsc-slope} and \ref{thm:gammaofscheme})\EEE. A first possibility is given by the assumption that
\begin{align}
\argmin_{\alpha \in \R}  Q_S^2(q_{11}, q_{12} , \alpha)  = 0 , \quad \quad  \argmin_{z \in \R} Q_S^1(q_{11},z) = 0 \quad \quad  \text{for all  $q_{11},q_{12} \in \R$}. \tag{\textbf{H1}} \label{quadraticforms}
\end{align}
This induces a restriction from \EEE a \EEE  modeling point of view since it  particularly corresponds to  materials with Poisson ratio zero, \ZZZ such as cork.  \EEE A reasonable generalization is to assume that   $Q_S^2$ are $\eps$-dependent, denoted by $Q_{S,\eps}^2$, such that $Q_{S,\eps}^2 = Q_S^2 + \RRR{\rm o}(1)\EEE \hat{Q}_{S}$ \EEE for $\eps \to 0$, \EEE where $Q_S^2$ satisfies \eqref{quadraticforms} and $\hat{Q}_S$ is any positive definite quadratic form. (For simplicity, we \BBB did \EEE not include this explicitly in the notation in \eqref{def:vK2D} and \eqref{eq: metriceps}.)   Another sound option is to consider thin materials with general Poisson ratio, but with a vanishing dissipation effect in the $e_2$ direction. In this case, the assumptions are given by  
\begin{align}
\lim_{\eps \to 0}  Q_{R,\eps}^2(q_{11}, \EEE q_{12} \EEE , \alpha)  = Q_{R}^1(q_{11}, \EEE q_{12}\EEE ), \quad \quad  \argmin_{z \in \R} Q_S^1(q_{11},z) = 0  \text{ for $S=W,R$}  \tag{\textbf{H2}} \label{eq:quadraticform:minimumnot0}
\end{align} 
\EEE for \EEE  all $q_{11},q_{12}, \EEE \alpha \EEE \in \R$. This setting includes, but is not restricted to, the case of  linear isotropic elastic materials with corresponding quadratic form
\begin{align*}
	Q_{W}^2(q_{11},q_{12}, q_{22}):= 2 \mu (q_{11}^2 + 2 q_{12}^2 + q_{22}^2) + \lambda (q_{11}+ q_{22})^2,
\end{align*}
where $\mu>0$ and $\lambda \in \R$ are suitable Lam\'e parameters. In particular, \eqref{quadraticforms} corresponds to $\lambda = 0$. In this paper, we cover both cases described above. We remark that \eqref{quadraticforms} and \eqref{eq:quadraticform:minimumnot0} are only needed for the proof of Lemma \ref{lem:mutual}.

%%For instance, this can be achieved by assuming that the minima of $Q_S^i$, $i = 1,2$, $S = W,R$ are attained at 0. Then, the quadratic forms take the form
%\begin{align*}
%Q_S^2(q_{11},q_{12},q_{22}) = a_S q_{11}^2 + b_S q_{12}^2 + c_S q_{22}^2
%\end{align*}
%for suitable coefficients $a_S,b_S,c_S >0$. 
% Therefore, it is reasonable to assume that $Q_S^2$ additionally depends on quadric forms of order $o_\eps(1)$. More precisely, the quadratic forms take the form
%\begin{align}
%Q_{S,\eps}^2(q_{11},q_{12},q_{22}) =  Q_S^2(q_{11},q_{12},q_{22}) + o_\eps(1) \hat{Q}(q_{11},q_{12},q_{22}), \tag{\textbf{H1}} \label{quadraticforms}
%\end{align}
%where $ Q_S^2$ attains its minima at $0$ and $\hat{Q}$ is an arbitrary positive definite quadratic form.
%Another sound assumption is to consider thin materials with a positive or negative Poissons ratio, but with a vanishing dissipation effect into the $e_2$ direction, which can be expressed by the quadratic forms 
%\begin{align}
%&Q_W^2(q_{11},q_{12},q_{22}) := a q_{11}^2 + b q_{12}^2 + c q_{22}^2 + ( q_{11} +d q_{22})^2 \quad\text{and} \nonumber \\
%&Q_R^2(q_{11},q_{12},q_{22}) := e q_{11}^2 +f q_{12}^2 + o_\eps(1) \tilde Q(q_{22}) \tag{\textbf{H2}} \label{eq:quadraticform:minimumnot0}
%\end{align} for positive coefficients $a,b,c,d,e,f>0$ and a positive definite quadratic form $\tilde Q$. 
%
%

\subsection{Equations of viscoelastic vK ribbons in 1D} We now present the effective 1D equations. To this end, define the set of admissible functions by 
\begin{align}\label{eq: mathcalK}
\mathcal{K}:= W_{\hat u_1}^{1,2}(I) \times  W_{\hat u_2}^{2,2}(I)   \times W_{\hat v}^{2,2}(I) \times W_0^{1,2}(I) .
\end{align}
In the following, we write  $^\prime$ for spatial- and $\partial_t$ for time derivatives. Recall the definition of the forces in \eqref{eq: forces}. \EEE 
Given $(\xi^0_1,\xi^0_2(t), w^0,\theta^0) \in \mathcal{K}$, we consider the \EEE system of equations  
\begin{align*}
g^{1D}_1 = &- \ZZZ \mfrac{\rm d}{{\rm d}x_1}\EEE \bigg(C_W^0\Big(\xi_1^\prime+\frac{\vert w^\prime\vert^2}{2}\Big) + C_R^0(\partial_t \xi_1^\prime+w^\prime \partial_t w^\prime )   \bigg) ,\\
g^{1D}_2 = &\ZZZ \mfrac{1}{12}\mfrac{{\rm d}^2}{{\rm d}x_1^2}\EEE\Big(C_W^0 \xi_2^{\prime\prime} +C_R^0 \partial_t \xi_2^{\prime\prime}   \Big) ,  \\
f^{1D}=& - \mfrac{\rm d}{{\rm d}x_1} \Bigg(\bigg(C_W^0\Big(\xi_1^{\prime} + \frac{\vert \wpr \vert^2}{2}\Big) + C_R^0(\partial_t \xi_1^{\prime} + w^\prime \partial_t w^\prime) \bigg) w^\prime \Bigg) \nonumber \\
&+
\mfrac{1}{24} \mfrac{{\rm d}^2}{{\rm d}x_1^2} \Big(\partial_1 Q_W^1(w^{\prime\prime},\theta^\prime)+\partial_1Q_R^1(\partial_t\wprpr,\partial_t \theta^\prime )\Big), \\
0 =&   \, \mfrac{\rm d}{{\rm d}x_1}\Big(\partial_2 Q_W^1(w^{\prime\prime},\theta^\prime)+\partial_2 Q_R^1(\partial_t \wprpr,\partial_t \theta^\prime)\Big) &&\text{in } [0,\infty) \times I \EEE
\end{align*}
such that $\xi_1(0,\cdot) = \xi_1^0$,  $\xi_2(0,\cdot) = \xi_2^0$,  $w(0,\cdot) = {w}^0$,  $\theta(0,\cdot) = {\theta}^0$ in $I$ and $(\xi_1(t),\xi_2(t), w(t),\theta(t)) \in \mathcal{K}$ for $t \in [0,\infty)$. We also say that $(\xi_1,\xi_2,w, \theta) \in W^{1,2}([0, \infty); \mathcal{K})$ is a \textit{weak solution} if $\xi_1(0,\cdot) = \xi_1^0$,  $\xi_2(0,\cdot) = \xi_2^0,$  $w(0,\cdot) = {w}^0,$  $\theta(0,\cdot) = {\theta}^0$ and for a.e. $t \ge 0$, we have
\begin{subequations}\label{eq:weak1d}
\begin{align}
0 = &\int_I  \bigg(C_W^0\Big(\xi_1^\prime+\frac{\vert w^\prime\vert^2}{2}\Big) + C_R^0(\partial_t \xi_1^\prime+w^\prime \partial_t w^\prime )   \bigg) \phi_{\xi_1}^\prime - \int_I g^{1D}_1 \phi_{\xi_1}, \label{equation:1}\\
0 = &\int_I \frac{1}{12}\Big(C_W^0 \xi_2^{\prime\prime} +C_R^0 \partial_t \xi_2^{\prime\prime}   \Big) \phi_{\xi_2}^{\prime\prime} - \int_I g^{1D}_2 \phi_{\xi_2},  \label{equation:2} \\
0=& \int_I \bigg(C_W^0\Big(\xi_1^{\prime} + \frac{\vert \wpr \vert^2}{2}\Big) + C_R^0(\partial_t \xi_1^{\prime} + w^\prime \partial_t w^\prime) \bigg) w^\prime \phi_w^\prime \nonumber \\
&+
\frac{1}{24} \int_I \Big(\partial_1 Q_W^1(w^{\prime\prime},\theta^\prime)+\partial_1Q_R^1(\partial_t\wprpr,\partial_t \theta^\prime )\Big) \phi_w^{\prime\prime}- \int_I f^{1D}\phi_w , \label{equation:3}\\
0 =& \int_I \Big(\partial_2 Q_W^1(w^{\prime\prime},\theta^\prime)+\partial_2 Q_R^1(\partial_t \wprpr,\partial_t \theta^\prime)\Big) \phi_\theta^{\prime}  \label{equation:4}
\end{align}
\end{subequations}
for all $\phi_{\xi_1} \in W_0^{1,2}(I)$, $\phi_{\xi_2} \in W^{2,2}_0(I)$, ${\phi_{w}} \in  W_0^{2,2}(I)$, and $\phi_\theta \in W_0^{1,2}(I)$.

We point out that  \eqref{equation:2} describing the orthogonal in-plane displacement $\xi_2$ is completely decoupled from the other equations, whereas the axial in-plane displacement $\xi_1$ is always coupled to the vertical displacement $w$ by \eqref{equation:1} and \eqref{equation:3}. Interestingly, under assumption \EEE \eqref{quadraticforms}, \EEE \BBB one can check that the twist function only appears in \eqref{equation:4}\BBB, \EEE and \eqref{equation:4} is also independent of $w$ in this setting, i.e., \eqref{equation:4} decouples completely, as well.

 Our goal will be to show existence of weak solutions to \eqref{eq:weak1d} and that weak \BBB solutions \EEE \eqref{eq: weak equation} converge to weak solutions \eqref{eq:weak1d} in a suitable sense. In particular, we will relate \eqref{eq:weak1d} \EEE to a metric gradient flow \EEE with respect to \EEE an energy $\phi_0$ in the space \EEE 
\begin{align}\label{eq: s1D}
 \mathscr{S}^{1D} := BN_{(\hat u_1, \hat u_2)}(S;\R^2) \times W_{\hat v}^{2,2}(I) \times W^{1,2}_0(I),
 \end{align}
 endowed with a metric $\mathcal{D}_0$ \ZZZ whose square is given by \EEE
\begin{align}
\mathcal{D}_0((y,w,\theta),(\tilde y, \tilde w, \tilde \theta))^2 :=&\int_S Q_R^0\Big(\partial_1 y_1 - \partial_1 \tilde y_1 + \frac{\vert w^\prime\vert^2}{2} - \frac{\vert \tilde w^{\prime}\vert^2}{2}\Big)  + \frac{1}{12}\int_I Q_R^1(w^{\prime\prime}- \tilde w^{\prime\prime}, \theta^\prime - \tilde \theta^{\prime})\label{def:metric}
\end{align}
for $(y,w,\theta), (\tilde y, \tilde w, \tilde \theta) \in \mathscr{S}^{1D} $ and the energy $\phi_0$ is given by
\begin{align}
\phi_0(y,w,\theta):= &\frac{1}{2} \int_S Q_W^0\Big(\partial_1 y_1 + \frac{\vert w^{\prime}\vert^2}{2}\Big)  + \frac{1}{24} \int_I Q_W^1(w^{\prime\prime}, \theta^\prime)  -\int_I f^{1D} w + g^{1D}\cdot y  \label{def:enegeryphi}
\end{align}
for $(y,w,\theta) \in \mathscr{S}^{1D}$. Note that \eqref{def:enegeryphi} coincides with \eqref{def:enegeryphi-intro} (for $f^{1D} =0$, $ g^{1D} = 0$) by using \eqref{def:BN} \EEE and by performing \ZZZ an \EEE integration over $x_2$, where one uses $\int_{-1/2}^{1/2} x_2 = 0$ and $\int_{-1/2}^{1/2} x_2^2 = 1/12$.  \EEE  For notational convenience, we  work with $\mathscr{S}^{1D}$ instead of the (equivalent) space $\mathcal{K}$, i.e., we identify $(\xi_1,\xi_2)$ with $y$ via \eqref{def:BN}. \EEE

\subsection{Main results}\label{sec: Main results}
To show existence and convergence of solutions, we will use the abstract theory of gradient flows \cite{AGS} and evolutionary $\Gamma$-convergence \cite{Mielke, S1, S2}. In particular, our approach to prove existence of 1D solutions  is twofold as we derive \MMM it \EEE both \EEE  by time-discrete approximations and also by limits of \EEE two-dimensional solutions. 

Our first main result addresses the existence of time-discrete solutions to the one-dimensional problem and their \EEE convergence to a curve of maximal slope for $\phi_0$ \EEE with respect to $\mathcal{D}_0$ and $\vert \partial \phi_0 \vert_{\mathcal{D}_0}$. For the main definitions and notation for curves of maximal slope and strong upper gradients we refer to  Subsection~\ref{sec: defs}.  In particular, we write $\vert \partial \phi_\eps \vert_{\mathcal{D}_\eps}$ and $\vert \partial \phi_0 \vert_{\mathcal{D}_0}$ for the local slopes, where the energies and metrics are defined in \eqref{def:vK2D}, \eqref{eq: metriceps}, \eqref{def:metric}, and \eqref{def:enegeryphi}. The definition of \EEE time-discrete solutions is given in Subsection~\ref{sec: auxi-proofs}. \EEE The relevant results about existence of \EEE curves of maximal slope are recalled in Subsections \ref{sec: auxi-proofs} and \ref{sec: auxi-proofs2}.

%\begin{theorem}[Solutions in the one-dimensional setting]
%\label{thm:curveofmaximalslope:energyidentity}
%Suppose ${\xi_1}_0 \in W_\lr^{1,2}(I)$, ${\xi_2}_0 \in W_\lr^{1,2}(I) \cap  W^{2,2}(I)$, $w_0 \in W^{2,2}_\lr(I)$ and $\theta_0 \in W^{1,2}_\lr(I)$ such that $({\xi_1}_0 - x_2{\xi_2}_0', {\xi_2}_0 ) \in BN_\lr(S,\R^2)$. Then,
%
%\begin{itemize}
%\item[(i)] (Existence) there exists a curve of maximal slope $(y,w,\theta) \in AC^2([0,\infty);(\mathscr{S}^{1D},D))$ such that $y(0) = y_0 $, $w(0) = w_0$ and $\theta(0) = \theta_0$, where ${y_0}_1(x):= {\xi_1}_0(x_1) - x_2 {\xi_2}_0'(x_1)$ and ${y_0}_2(x):= {\xi_0}_2(x_1)$. In particular, it holds for all $T >0$ that
%\begin{align*}
%\frac{1}{2} \int_0^T \vert (y,w,\theta)'\vert_D^2(t) dt + \frac{1}{2} \int_0^T \vert \partial \phi \vert^2_D\big( (y,w,\theta)(t) \big) dt + \phi \big( (y,w,\theta)(T) \big) = \phi \big( (y_0,w_0,\theta_0) \big).
%\end{align*}
%\item[(ii)] (Approximation) there exists a subsequence $(\tau_k)_{k \in \N}$ such that a subsequence of discrete solutions $(\bar U_{\tau_k})_{k \in \N}$ defined in \eqref{eq: ds-new2} satisfies $\bar U_{\tau_k}(t) \to (y(t),w(t),\theta(t))$ strongly for all $t \in [0,\infty)$.
%\end{itemize}
%\end{theorem}

\begin{theorem}[Solutions in the one-dimensional setting] 
\label{thm:curveofmaximalslope:energyidentity}
Consider  $(\xi_1^0, \xi_2^0, w^0,\theta^0) \in \mathcal{K}$ and define $y^0 := (\xi_1^0 - x_2 (\xi^0_2)', \xi_2^0)$, i.e., $(y^0,w^0,\theta^0) \in \mathscr{S}^{1D}$.  
\begin{itemize}
\item[(i)] {\rm{(Approximation and existence)}}  For each null sequence $(\tau_l)_{l\in \N}$ and each sequence of discrete solutions $(\bar U_{\tau_l})_l$ as in \eqref{eq: ds-new2} below, there exists an absolutely continuous function $(y,w,\theta)\colon [0,\infty) \to \mathscr{S}^{1D}$ with respect to the metric $\mathcal{D}_0$ satisfying $(y,w,\theta)(0) = (y^0,w^0,\theta^0)$ such that, up to a subsequence, not relabeled,
$$\bar U_{\tau_l}(t) \to (y(t),w(t),\theta(t)) \quad  \quad \text{for all $t \in [0,\infty)$ as $l \to \infty$}$$ 
\EEE with respect to the topology induced by $\mathcal{D}_0$, \EEE  and $(y,w,\theta)$ is a curve of maximal slope for $\phi_0$ with respect to $\vert \partial \phi_0 \vert_{\mathcal{D}_0}$.
\item[(ii)] {\rm{(Identification)}}  Each \EEE curve of maximal slope $(y,w,\theta)$ for $\phi_0$ with respect to $\vert \partial \phi_0 \vert_{\mathcal{D}_0}$ can be   identified via \eqref{def:BN}  \EEE with a curve 
\begin{align}
(\xi_1,\xi_2,w,\theta) \in W^{1,2}\big([0,\infty);\mathcal{K} \big) \label{regularitylimiting}
\end{align}
such that $(\xi_1,\xi_2,w,\theta)$ is  a weak solution of the system  \eqref{eq:weak1d}.
\end{itemize}
\end{theorem}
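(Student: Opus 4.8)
\textbf{Proof plan for Theorem~\ref{thm:curveofmaximalslope:energyidentity}.}

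\emph{Part (i).} The plan is to apply the abstract existence theory for curves of maximal slope via generalized minimizing movements (as recalled in Sections~3 and~\ref{sec: auxi-proofs}) to the metric space $(\mathscr{S}^{1D}, \mathcal{D}_0)$ with driving functional $\phi_0$. The verification breaks into the standard list of hypotheses: (a) $\mathcal{D}_0$ is a (complete) metric on $\mathscr{S}^{1D}$, which follows since $Q_R^0$ and $Q_R^1$ are positive definite quadratic forms, together with Poincaré-type inequalities controlling the full displacements by the listed strain combinations under the boundary conditions built into $\mathscr{S}^{1D}$; (b) lower semicontinuity of $\phi_0$ with respect to the topology induced by $\mathcal{D}_0$ (equivalently the weak topology of the relevant Sobolev spaces), which is the content of the $\Gamma$-convergence statement Theorem~\ref{th: Gamma} restricted to the limit functional, using convexity of the quadratic parts and compactness of the lower-order force terms; (c) coercivity of $\phi_0$ and compactness of its sublevel sets, again from positive definiteness of the quadratic forms plus the boundary conditions, so that minimizing movement sequences are precompact; (d) $\vert \partial \phi_0 \vert_{\mathcal{D}_0}$ is a strong upper gradient for $\phi_0$. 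Once (a)--(d) are in place, the abstract theorem yields that any sequence of discrete solutions $(\bar U_{\tau_l})_l$ as in \eqref{eq: ds-new2} admits a subsequence converging pointwise in time to an absolutely continuous curve $(y,w,\theta)\colon[0,\infty)\to\mathscr{S}^{1D}$ with the prescribed initial datum $(y^0,w^0,\theta^0)$, and that this limit curve is a curve of maximal slope for $\phi_0$ with respect to $\vert\partial\phi_0\vert_{\mathcal{D}_0}$. The initial condition is inherited since the discrete schemes start at $(y^0,w^0,\theta^0)$ and $\mathcal{D}_0$-convergence preserves it.

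\emph{Part (ii).} Here the plan is to exploit the energy-dissipation balance satisfied by a curve of maximal slope to extract the weak Euler--Lagrange system \eqref{eq:weak1d}. First I would upgrade regularity: the maximal slope property gives $\phi_0(y(t),w(t),\theta(t)) + \tfrac12\int_0^t |\partial\phi_0|_{\mathcal{D}_0}^2 + \tfrac12\int_0^t |(y,w,\theta)'|_{\mathcal{D}_0}^2 \le \phi_0(y^0,w^0,\theta^0)$, and since $\mathcal{D}_0^2$ is a positive definite quadratic form in $(\partial_1 y_1 + \tfrac12|w'|^2, w'', \theta')$, the finiteness of the metric derivative translates (via the identification \eqref{def:BN}) into $W^{1,2}([0,\infty);\mathcal{K})$ regularity of $(\xi_1,\xi_2,w,\theta)$, i.e.\ \eqref{regularitylimiting}; note that controlling $\partial_t\xi_1'$ uses that $\partial_t(\tfrac12|w'|^2) = w'\partial_t w'$ is already controlled once $w\in W^{2,2}$ in space and $\partial_t w'\in L^2$. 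Next, the chain rule for $\phi_0$ along the curve, combined with the energy-dissipation balance, forces the curve to be a solution of the abstract gradient flow $-\partial_t(y,w,\theta)\in$ (subdifferential of $\phi_0$ in the metric sense); concretely, one computes the first variation of $\phi_0$ and of $\tfrac12\mathcal{D}_0^2$ in each of the four admissible test directions $\phi_{\xi_1}\in W^{1,2}_0(I)$, $\phi_{\xi_2}\in W^{2,2}_0(I)$, $\phi_w\in W^{2,2}_0(I)$, $\phi_\theta\in W^{1,2}_0(I)$, and matches them. The variations of the quadratic forms produce exactly the stress expressions $C_W^0(\xi_1' + \tfrac12|w'|^2)$, $\tfrac1{12}C_W^0\xi_2''$, $\partial_1 Q_W^1(w'',\theta')$, $\partial_2 Q_W^1(w'',\theta')$ in the elastic part, and the variations of $\tfrac12\mathcal{D}_0^2$ (differentiated in the ``velocity'' slot) produce the viscous counterparts with $C_R^0$ and $Q_R^1$; the force terms are linear and pass through directly. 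This yields precisely \eqref{equation:1}--\eqref{equation:4}.

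\emph{Main obstacle.} I expect the delicate point of Part~(ii) to be passing rigorously from the \emph{metric} notion of curve of maximal slope to the \emph{pointwise-in-time} weak PDE system, i.e.\ justifying the chain rule $\frac{d}{dt}\phi_0(y(t),w(t),\theta(t)) = \langle D\phi_0, (y,w,\theta)'\rangle$ along an absolutely continuous curve whose metric derivative is only $L^2$ in time, and identifying the metric slope $|\partial\phi_0|_{\mathcal{D}_0}$ with the dual norm (induced by $\mathcal{D}_0$) of the Fréchet derivative of $\phi_0$. This requires the finer representation of the local slope via generalized convexity alluded to in the introduction (the global-slope identity up to lower order terms), and care with the non-convex term $\tfrac12|w'|^2$ appearing in both $\phi_0$ and $\mathcal{D}_0$: the coupling means the derivative of $\phi_0$ and the metric tensor are both affine-plus-quadratic in $w'$, so one must verify that the composition is still differentiable along the curve with the expected Leibniz rule. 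A secondary, more technical obstacle is the boundary-condition bookkeeping under the identification \eqref{def:BN}: one must check that the test functions of the two-dimensional Bernoulli--Navier type correspond exactly to the scalar test spaces $W^{1,2}_0(I)\times W^{2,2}_0(I)$ for $(\phi_{\xi_1},\phi_{\xi_2})$, so that no spurious boundary terms survive the integrations by parts that convert the weak formulation into the strong system. For Part~(i), by contrast, everything is a relatively routine check of the abstract hypotheses, the only mild subtlety being completeness of $(\mathscr{S}^{1D},\mathcal{D}_0)$, which should follow from the positive-definiteness of the quadratic forms plus the closedness of $\mathscr{S}^{1D}$ under weak convergence.
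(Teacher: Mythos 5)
Your overall strategy coincides with the paper's: Part (i) is obtained by applying the abstract minimizing-movement convergence result (Theorem~\ref{th:abstract convergence 2}, used here with a single space $(\mathscr{S}^{1D},\mathcal{D}_0)$) after verifying completeness, compactness of sublevel sets, lower semicontinuity, and the strong-upper-gradient property; Part (ii) follows the paper's Steps 1--3: regularity of the curve, sharp lower bounds for the metric derivative and for $\tfrac{\rm d}{{\rm d}t}\phi_0$ along the curve, Young's inequality turning the maximal-slope inequality into pointwise equalities, and then testing in the four admissible directions to recover \eqref{equation:1}--\eqref{equation:4}. The paper makes your ``dual-norm identification'' of the slope precise via the operator $\mathcal{L}$ in Lemma~\ref{thm:sloperepresentation}, which plays the role of a multiplier accounting for the fact that the admissible directions $H(\phi_y,\phi_w,\phi_\theta\,\vert\, w)$ do not span all of $L^2(S;\R^3)$; your sketch of Part (ii) is consistent with this.

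One point in Part (i) is underestimated, and your checklist has a genuine omission there. The abstract convergence theorem requires, besides (a)--(d), the lower semicontinuity of the local slope along weakly convergent sequences with bounded energy (condition \eqref{eq: implication}); you do not list it, and it is not automatic, because both $\phi_0$ and $\mathcal{D}_0$ contain the non-convex coupling $\tfrac12|w'|^2$. In the paper this is exactly where the work lies: the generalized-geodesic estimates of Lemma~\ref{PhiMPhi} yield the representation of $\vert\partial\phi_0\vert_{\mathcal{D}_0}$ with the correctors $\Phi^1,\Phi^2_M$ (Lemma~\ref{Thm:representationlocalslope1}(i)), and combined with the mutual recovery sequence of Lemma~\ref{lem:mutualrecovery} this gives both the weak lower semicontinuity of the slope and the strong-upper-gradient property (your item (d), which you state but do not indicate how to prove). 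So the ``finer representation of the local slope via generalized convexity'' that you defer to the chain-rule issue in Part (ii) is already indispensable for Part (i); calling Part (i) a routine check with completeness as the only subtlety misplaces where the difficulty sits, even though your overall route is the correct one.
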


%\begin{align*}
%\mathcal{K}:= W_{\hat u_1}^{1,2}(I) \times \big( W^{1,2}(I) \cap W_{\hat u_2}^{2,2}(I) \big) \times W_{\hat v}^{2,2}(I) \times W_0^{1,2}(I) .
%\end{align*}
%The next theorem identifies curves of maximal slopes in the one-dimensional setting with solutions of the one-dimensional equations \eqref{eq:weak1d}.

%\begin{theorem}[Relation to one-dimensional PDE's]\label{identification} 
%	A curve of maximal slope $(y,w,\theta)$ for $\phi_0$ with respect to $\vert \partial \phi_0 \vert_{\mathcal{D}_0}$ can be identified with a curve \begin{align}
%	(\xi_1,\xi_2,w,\theta) \in W^{1,2}\big([0,\infty); W_\lr^{1,2}(I) \times W_\lr^{1,2}(I) \cap W^{2,2}(I) \times W_\lr^{2,2}(I) \times W_\lr^{1,2}(I) \big) \label{regularitylimiting}
%	\end{align}
%	such that $(\xi_1,\xi_2,w,\theta)$ is indeed a weak solution of the system of partial differential equations \eqref{eq:weak1d}.
%\end{theorem}

Now,  we study the relation of weak solutions  \eqref{eq: weak equation} and weak solutions \eqref{eq:weak1d}. \EEE To this end, we need to specify the topology of the convergence. \EEE  We define mappings $\pi_\eps\colon {\mathscr{S}}^{2D}_\eps \to \BBB {\mathscr{S}}\EEE$ by $\BBB \pi_\eps(u,v)\EEE:= (y,w, \tfrac{\partial_2 w}{\eps})$, where
$y$ and $w$ are the scaled in-plane and out-of-plane displacements corresponding to
$u$ and $v$, see \eqref{scaledfunctions}, \BBB and  $\mathscr{S} = \pi_\eps(\mathscr{S}^{2D}_\eps)$. \EEE We say that $\pi_\eps (u_\eps,v_\eps) = (y_\eps , w_\eps, \tfrac{\partial_2 w_\eps}{\eps}) \stackrel{\sigma}{\to} (y,w,\theta)$ if we have the convergence in the sense of Proposition \ref{thm:compactness}. Furthermore, we say that $(u_\eps,v_\eps) \stackrel{\pi\sigma}{\to} (y,w,\theta)$ if $\pi_\eps (u_\eps,v_\eps) \stackrel{\sigma}{\to} (y,w,\theta)$.
The sequence $(u_\eps, v_\eps)_\eps$ converges strongly to $(y,w,\theta)$, written $(u_\eps,v_\eps) \stackrel{\pi\rho}{\to} (y,w,\theta)$, if the convergence in Proposition \ref{thm:compactness} also holds with respect to the strong in place of the weak topology. We remark that the limiting variables $(y,w,\theta)$ are contained in the space \EEE $\mathscr{S}^{1D} \subset \mathscr{S}$ \EEE defined in \eqref{eq: s1D}.\EEE

\begin{theorem}[Relation between two-dimensional and one-dimensional system\EEE]\label{thm:relation2d1d} \ZZZ Suppose that \eqref{quadraticforms} or \eqref{eq:quadraticform:minimumnot0} holds. 
Consider a null sequence $(\eps_l)_{ l\in \N}$ and a sequence of initial data $(u^0_{\eps_l},v^0_{\eps_l})_{\eps_l}$ with $(u^0_{\eps_l},v^0_{\eps_l})\in \mathscr{S}_{\eps_l,M}^{2D}$ such that $(u^0_{\eps_l},v^0_{\eps_l}) \stackrel{\pi\sigma}{\to} (y^0,w^0,\theta^0) \in \mathscr{S}^{1D}$ and $\phi_\eps(u^0_{\eps_l},v^0_{\eps_l}) \to \phi_0(y^0,w^0,\theta^0)$. \EEE

\begin{itemize}
\item[(i)] {\rm{(Convergence of continuous solutions)}} Let $(u_{\eps_l},v_{\eps_l})_l$ be a sequence of curves of maximal slopes for $\phi_{\eps_l}$ with respect to $\vert \partial \phi_{\eps_l} \vert_{\mathcal{D}_{\eps_l}}$ satisfying $(u_{\eps_l}(0),v_{\eps_l}(0)) = (u^0_{\eps_l}, v^0_{\eps_l})$. Then, there exists an
absolutely continuous function $(y,w,\theta)\colon [0,\infty) \to \mathscr{S}^{1D}$ with respect to the metric $\mathcal{D}_0$ satisfying $(y,w,\theta)(0) = (y^0,w^0,\theta^0)$
such that, up to a subsequence, not relabeled,
\begin{align*}
\BBB(u_{\eps_l}(t),v_{\eps_l}(t))\EEE \stackrel{\pi\rho}{\to} (y(t),w(t),\theta(t))  \quad \quad \text{for all $t \in [0,\infty)$ as $l \to \infty$}
\end{align*}
and $(y,w,\theta)$ is a curve of maximal slope for $\phi_0$ with respect to $\vert \partial \phi_0\vert_{\mathcal{D}_0}$.
\item[(ii)] {\rm{(Convergence of discrete solutions)}} For all $\tau>0$ and all discrete solutions $\bar Y_{\eps_l,\tau}$ as in \eqref{eq: ds-new2} below there exists a discrete solution \BBB $\bar U_\tau$ \EEE \BBB of \EEE the one-dimensional system such that, up to a subsequence, not relabeled,\begin{align*}
\bar Y_{\eps_l, \tau} \stackrel{\pi\rho}{\to} \BBB\bar U_\tau \EEE\quad \quad \text{for all $t\in [0,\infty)$ as $l \to \infty$.}
\end{align*} 
\item[(iii)] {\rm{(Convergence at specific scales)}} For each null sequence $(\tau_l)_{l\in \N}$ and each sequence of discrete solutions $\bar Y_{\eps_l, \tau_l}$ as in \eqref{eq: ds-new2} below, there exists an absolutely continuous function $(y,w,\theta)\colon [0,\infty) \to \mathscr{S}^{1D}$ with respect to the metric $\mathcal{D}_0$ satisfying $(y,w,\theta)(0) = (y^0,w^0,\theta^0)$ such that, up to a subsequence, not relabeled,
 \begin{align*}
\bar Y_{\eps_l,\tau_l}(t) \stackrel{\pi\rho}{\to} (y(t),w(t),\theta(t)) \quad \quad \text{for all $t\in [0,\infty)$ as
$l \to \infty$}
\end{align*}  
and $(y,w,\theta)$ is a curve of maximal slope for $\phi_0$ with respect to $\vert \partial \phi_0\vert_{\mathcal{D}_0}$.
\EEE
\end{itemize}

\end{theorem}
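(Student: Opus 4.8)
The plan is to follow the Sandier--Serfaty scheme for evolutionary $\Gamma$-convergence in the metric-space setting of \cite{AGS, S1, S2}, treating simultaneously the continuous and the time-discrete levels so that the three assertions (i)--(iii) — and the commutativity diagram of Figure~\ref{diagram} — all fall out of a single set of estimates. The four structural ingredients needed are: (a) \emph{compactness} of sequences of solutions (both continuous and discrete), (b) \emph{lower semicontinuity of the energies} $\phi_{\eps_l} \to \phi_0$ along the $\pi\sigma$-convergence, (c) \emph{lower semicontinuity of the dissipation metrics} $\mathcal{D}_{\eps_l} \to \mathcal{D}_0$, and (d) \emph{lower semicontinuity of the local slopes} $|\partial\phi_{\eps_l}|_{\mathcal{D}_{\eps_l}} \to |\partial\phi_0|_{\mathcal{D}_0}$. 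Ingredients (a), (b), (c) are essentially the $\Gamma$-convergence analysis of \cite{Freddi2018}, adapted to boundary conditions instead of zero-mean normalization, and are recorded in Proposition~\ref{thm:compactness} and Theorems~\ref{th: Gamma}, \ref{th: lscD}. Ingredient (d) is the technical core; it rests on the mutual-recovery-sequence construction of Lemma~\ref{lem:mutual}, which is exactly where the compatibility hypotheses \eqref{quadraticforms} and \eqref{eq:quadraticform:minimumnot0} enter, explaining the hypothesis in the statement.

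For part~(i), I would proceed as follows. Fix the sequence of curves of maximal slope $(u_{\eps_l},v_{\eps_l})$ for $\phi_{\eps_l}$ with respect to $|\partial\phi_{\eps_l}|_{\mathcal{D}_{\eps_l}}$, each satisfying the energy--dissipation identity. First, use the well-prepared initial data ($\phi_{\eps_l}(u^0_{\eps_l},v^0_{\eps_l}) \to \phi_0(y^0,w^0,\theta^0)$) together with the energy--dissipation balance to derive a uniform-in-$l$ bound on $\phi_{\eps_l}(u_{\eps_l}(t),v_{\eps_l}(t))$ and on $\int_0^T (|\partial\phi_{\eps_l}|_{\mathcal{D}_{\eps_l}})^2\,{\rm d}t$ and on the metric speed $\int_0^T |(u_{\eps_l},v_{\eps_l})'|^2_{\mathcal{D}_{\eps_l}}\,{\rm d}t$. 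By the compactness result (the time-dependent version of Proposition~\ref{thm:compactness}, plus a refined Arzelà--Ascoli argument à la \cite[Prop.~3.3.1]{AGS}), extract a subsequence and a limit $(y,w,\theta)\colon [0,\infty)\to\mathscr{S}^{1D}$, absolutely continuous with respect to $\mathcal{D}_0$, with $\pi\rho$-convergence at every $t$ (strong convergence is upgraded from weak via lower semicontinuity of the energy combined with convergence of energies — the standard trick that turns a $\liminf$ into a limit and forces strong convergence of the relevant strain quantities). Then pass to the $\liminf$ in the energy--dissipation inequality: use (c) (lsc of metrics, which controls the metric derivative term $\int |(y,w,\theta)'|^2_{\mathcal{D}_0}$), use (d) (lsc of slopes, which controls $\int (|\partial\phi_0|_{\mathcal{D}_0})^2$), and use (b) together with convergence at the endpoints to handle $\phi_0(y(t),w(t),\theta(t)) - \phi_0(y^0,w^0,\theta^0)$. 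Since $|\partial\phi_0|_{\mathcal{D}_0}$ is a strong upper gradient for $\phi_0$ (this must be verified once, via the finer representation of the slope as essentially the global slope, from Lemma~\ref{lem:mutualrecovery}), the limiting inequality is in fact an equality, so $(y,w,\theta)$ is a curve of maximal slope, proving (i). Finally, Theorem~\ref{thm:curveofmaximalslope:energyidentity}(ii) identifies this curve with a weak solution of \eqref{eq:weak1d}.

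For parts~(ii) and (iii) I would run the \emph{discrete} analogue. Part~(ii): for fixed $\tau>0$ and a given discrete solution $\bar Y_{\eps_l,\tau}$ of the $2$D minimizing-movement scheme \eqref{eq: ds-new2}, I would show that, step by step in the finite recursion, each minimizer $\bar Y_{\eps_l,\tau}^{k}$ of $z \mapsto \phi_{\eps_l}(z) + \frac{1}{2\tau}\mathcal{D}_{\eps_l}(\bar Y_{\eps_l,\tau}^{k-1},z)^2$ is $\pi\rho$-compact and converges to a minimizer of $z\mapsto \phi_0(z)+\frac{1}{2\tau}\mathcal{D}_0(\bar U_\tau^{k-1},z)^2$; this is a $\Gamma$-convergence of the incremental functionals (again (b)+(c), together with a recovery-sequence construction for the upper bound — the mutual recovery sequence of Lemma~\ref{lem:mutual} is precisely what makes the upper bound compatible for energy and dissipation simultaneously). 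An induction on $k$, with a diagonal argument over $l$, yields the full discrete trajectory $\bar U_\tau$ as a discrete solution of the $1$D scheme, giving (ii). Part~(iii): take $\tau_l\to 0$ and $\eps_l\to 0$ jointly; combine the discrete energy--dissipation estimates (uniform in both $l$ and the interpolation) with the De Giorgi interpolation to get compactness of the piecewise-constant interpolants, and pass to the $\liminf$ exactly as in (i), using that discrete solutions satisfy a discrete energy--dissipation inequality that $\Gamma$-converges to the continuous one; the limit is again a curve of maximal slope. Commutativity (Figure~\ref{diagram}) is then the statement that (i)$\circ$(time-discrete-to-continuous in 1D) and (ii)$\circ$(iii)-type limits produce the same object, which follows because all these limits are characterized by the same energy--dissipation equality for $\phi_0,\mathcal{D}_0$.

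The main obstacle is ingredient (d): lower semicontinuity of the local slopes under $\pi\sigma$-convergence. Unlike the energies and metrics, the slope is not weakly lower semicontinuous for free, because it is a $\limsup$ of difference quotients; the resolution — following the strategy sketched in the introduction — is to replace $|\partial\phi_0|_{\mathcal{D}_0}$ by its finer representation via the global slope (valid up to lower-order terms by generalized $\lambda$-convexity of $\phi_\eps$ along $\mathcal{D}_\eps$-geodesic-like interpolations), reducing the problem to: given $(u_{\eps_l},v_{\eps_l})\stackrel{\pi\sigma}{\to}(y,w,\theta)$ and a near-optimal competitor $\tilde z$ for $(y,w,\theta)$, construct a \emph{mutual recovery sequence} $(\tilde u_{\eps_l},\tilde v_{\eps_l})$ with $\phi_{\eps_l}(u_{\eps_l},v_{\eps_l})-\phi_{\eps_l}(\tilde u_{\eps_l},\tilde v_{\eps_l})\to \phi_0(y,w,\theta)-\phi_0(\tilde z)$ and $\mathcal{D}_{\eps_l}((u_{\eps_l},v_{\eps_l}),(\tilde u_{\eps_l},\tilde v_{\eps_l}))\to\mathcal{D}_0((y,w,\theta),\tilde z)$ \emph{simultaneously}. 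The difficulty is that the natural recovery perturbation in the transverse ($x_2$) direction that makes the \emph{energy} gap converge (it must realize the minimization defining $Q^0_W,Q^1_W$) generically produces a nonzero transverse contribution to the \emph{dissipation} $\mathcal{D}_{\eps_l}$ built from $Q_R^2$ — unless the transverse minimizers of $Q_W^2$ and $Q_R^2$ agree (hypothesis \eqref{quadraticforms}), or the transverse dissipation degenerates in the limit (hypothesis \eqref{eq:quadraticform:minimumnot0}). This is exactly Lemma~\ref{lem:mutual}, and its proof — carefully choosing the correctors for $y$ and for $\partial_2 w/\eps$ so that energy and dissipation decouple in the limit — is where the real work lies.
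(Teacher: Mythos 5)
Your proposal is correct and follows essentially the same route as the paper: verify compactness, lower semicontinuity of energies, metrics and slopes (the latter via the mutual recovery sequences of Lemma~\ref{lem:mutual}, where \eqref{quadraticforms}/\eqref{eq:quadraticform:minimumnot0} enter), then invoke the Sandier--Serfaty/minimizing-movement convergence machinery for (i) and (iii), a stepwise $\Gamma$-convergence of the incremental problems with induction for (ii), and upgrade $\pi\sigma$ to $\pi\rho$ via convergence of energies. The only cosmetic difference is that you re-derive the abstract convergence statements (energy--dissipation liminf argument, Arzel\`a--Ascoli, De Giorgi interpolation) which the paper simply cites as Theorems~\ref{th:abstract convergence 2} and \ref{thm: sandierserfaty}.
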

Note that in the two-dimensional setting the existence of curves of maximal slope in (i) and  discrete solutions in (ii) \BBB and \EEE  (iii)  is guaranteed by  \cite[Theorem 2.2]{MFMKDimension} and  \cite[Theorem 4.1]{MFMKJV}. \EEE
In particular, weak solutions in the two-dimensional setting converge (in the sense above) to solutions of the one-dimensional equations. We refer to Figure \ref{diagram} for an illustration. \EEE  We point out that \EEE existence of solutions to the one-dimensional equations   follows without \eqref{quadraticforms} or \eqref{eq:quadraticform:minimumnot0}, \EEE see \EEE Theorem~\ref{thm:curveofmaximalslope:energyidentity}. These compatibility \EEE conditions are only needed to prove Theorem~\ref{thm:relation2d1d}.

 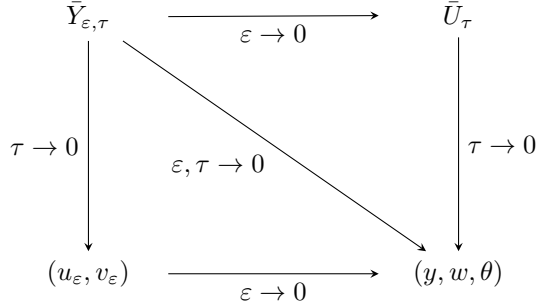
\begin{figure}[H]
 	\centering
 	
 	%\resizebox{300pt}{200pt}{%
 	
 	\begin{tikzpicture}
 	\matrix (m) [matrix of math nodes,row sep=8em,column sep=8em,minimum width=6em, ampersand replacement=\&]
 	{ \bar Y_{\eps,\tau}    \& \bar U_\tau \\
 		(u_\eps,v_\eps) \&  (y,w,\theta) \\
 	};
 	\path[-stealth]
 	(m-1-1) edge node [left] {$\tau \to 0$} (m-2-1)
 	edge   node [below] {$\eps \to 0$} (m-1-2)
 	(m-2-1) edge node [below] {${\eps \to 0}$}               (m-2-2)
 	(m-1-2) edge node [right] {$\tau \to 0$} (m-2-2) 
 	(m-1-1)      
 	edge node [below] {\hspace{-1.5cm}$\eps,\tau\to 0$} (m-2-2)  ;
 	\end{tikzpicture}
 	
 	\caption{Illustration of the commutativity result given in Theorem \ref{thm:curveofmaximalslope:energyidentity} and Theorem~\ref{thm:relation2d1d}, where \EEE $\tau$ indicates the timestep and $\eps$ the width of the ribbon. \EEE The horizontal and diagonal arrows are addressed in Theorem \ref{thm:relation2d1d}. The left vertical arrow is a consequence of \cite[Theorem 4.1]{MFMKJV}, up to minor adaptions due to boundary conditions. The remaining vertical arrow \EEE corresponds to Theorem~\ref{thm:curveofmaximalslope:energyidentity}. } \label{diagram}
 	%	}
 	
 \end{figure}

\section{Preliminaries: Curves of maximal slope}\label{sec3}

In this section, we  recall the relevant definitions about curves of maximal slope and present  abstract theorems concerning the convergence of time-discrete solutions and continuous solutions to curves of maximal slope.   
\subsection{Definitions}\label{sec: defs}

We consider a   complete metric space $(\mathscr{S},\mathcal{D})$. We say a curve $y\colon (a,b) \to \mathscr{S}$ is \emph{absolutely continuous} with respect to $\mathcal{D}$ if there exists $m \in L^1(a,b)$ such that
\begin{align*}
\mathcal{D}(y(s),y(t)) \le \int_s^t m(r) \,\ZZZ {\rm d}r \EEE \ \ \   \text{for all} \ a \le s \le t \le b.
\end{align*}
The smallest function $m$ with this property, denoted by $|y'|_{\mathcal{D}}$, is called \emph{metric derivative} of  $y$  and satisfies  for a.e.\ $t \in (a,b)$   (see \cite[Theorem 1.1.2]{AGS} for the existence proof)
$$|y'|_{\mathcal{D}}(t) := \lim_{s \to t} \frac{\mathcal{D}(y(s),y(t))}{|s-t|}.$$
We  define the notion of a \emph{curve of maximal slope}. We only give the basic definition here and refer to \cite[Section 1.2, 1.3]{AGS} for motivations and more details.  By  $h^+:=\max(h,0)$ we denote the positive part of a function  $h$.

\begin{definition}[Upper gradients, slopes, curves of maximal slope]\label{main def2} 
 We consider a   complete metric space $(\mathscr{S},\mathcal{D})$ with a functional $\phi\colon \mathscr{S} \to (-\infty,+\infty]$.

{\rm(i)} A function $g\colon \mathscr{S} \to [0,\infty]$ is called a strong upper gradient for $\phi$ if for every absolutely continuous curve $ y\colon  (a,b) \to \mathscr{S}$ the function $g \circ y$ is Borel and 
$$|\phi(y(t)) - \phi(y(s))| \le \int_s^t g( y(r)) |y'|_{\mathcal{D}}(r)\,\EEE {\rm d}r \EEE \  \ \  \text{for all} \ a< s \le t < b.$$

{\rm(ii)} For each $y \in \mathscr{S}$ the local slope of $\phi$ at $y$ is defined by 
$$|\partial \phi|_{\mathcal{D}}(y): = \limsup_{z \to y} \frac{(\phi(y) - \phi(z))^+}{\mathcal{D}(y,z)}.$$

{\rm(iii)} An absolutely continuous curve $y\colon (a,b) \to \mathscr{S}$ is called a curve of maximal slope for $\phi$ with respect to the strong upper gradient $g$ if for a.e.\ $t \in (a,b)$
$$\frac{\rm d}{ {\rm d} t} \phi(y(t)) \le - \frac{1}{2}|y'|^2_{\mathcal{D}}(t) - \frac{1}{2}g^2(y(t)).$$
\end{definition}

\subsection{Curves of maximal slope as limits of time-discrete solutions}\label{sec: auxi-proofs}
 %It is often convenient to consider a weaker Hausdorff topology $\sigma$ on $\mathscr{S}$ to have more flexibility in the derivation of compactness properties (see \cite[Remark 2.0.5]{AGS}). 

We consider a sequence of complete metric spaces $(\mathscr{S}_k, \mathcal{D}_k)_k$, as well as a limiting complete metric space $(\mathscr{S}_0,\mathcal{D}_0)$. Moreover, let $(\phi_k)_k$ be a sequence of functionals with $\phi_k\colon \mathscr{S}_k \to [0,\infty]$ and $\phi_0\colon \mathscr{S}_0 \to [0,\infty]$.

We introduce time-discrete solutions for the energy $\phi_k$ and the metric $\mathcal{D}_k$ by solving suitable time-incremental minimization problems: consider a fixed time step $\tau >0$ and suppose that an initial datum $Y^0_{k,\tau}$ is given. Whenever $Y_{k,\tau}^0, \ldots, Y^{n-1}_{k,\tau}$ are known, $Y^n_{k,\tau}$ is defined as (if existent)
\begin{align}\label{eq: ds-new1}
Y_{k,\tau}^n = {\rm argmin}_{v \in \mathscr{S}_k} \mathbf{\Phi_k}(\tau,Y^{n-1}_{k,\tau}; v), \ \ \ \mathbf{\Phi_k}(\tau,u; v):=  \frac{1}{2\tau} \mathcal{D}_k(v,u)^2 + \phi_k(v). 
\end{align}
We suppose that for a choice of $\tau$ a sequence $(Y_{k,\tau}^n)_{n \in \N}$ solving \eqref{eq: ds-new1} \EEE exists. Then we define the  piecewise constant interpolation by
\begin{align}\label{eq: ds-new2}
  \bar{Y}_{k,\tau}\EEE(0) = Y^0_{k,\tau}, \ \ \ \bar{Y}_{k,\tau}(t) = Y^n_{k,\tau}  \ \text{for} \ t \in ( (n-1)\tau,n\tau], \ n\ge 1.  
\end{align}
We call  $\bar{Y}_{k,\tau}$  a \emph{time-discrete solution}. Note that the existence of such  solutions is usually guaranteed by the direct method of the calculus of variations under suitable compactness, coercivity, and lower semicontinuity assumptions.

%\BBB To this end, introduce a projection of $\mathscr{S}_k$ onto $\mathscr{S}_0$. We suppose that there exists  $\mathscr{S}$ with  $\mathscr{S} \supseteq \mathscr{S}_0$ and that \EEE for each $k \in \N$ there exists a map $\pi_k\colon \mathscr{S}_k \to \mathscr{S}$.  \BBB (Note that a usual assumption is $\mathscr{S} = \mathscr{S}_0$, see e.g.\  \cite{S2},  but for our application we need a slightly more general setting.)  \EEE

Our goal is to study the limit of time-discrete solutions as $k \to \infty$. To this end, we need to introduce a suitable topology for the convergence.  We suppose that there exists a set  $\mathscr{S}$ with  $\mathscr{S} \supseteq \mathscr{S}_0$ and a projection $\pi_k\colon \mathscr{S}_k \to \mathscr{S}$.  \BBB (Note that a usual assumption is $\mathscr{S} = \mathscr{S}_0$, see e.g.\  \cite{S2},  but for our application we need a slightly more general setting.)  \EEE

%
%We assume that there is a possibly weaker \EEE topology $\sigma$ on $\mathscr{S}$. Given a sequence $(z_k)_k$, $z_k \in \mathscr{S}_k$, and $z \in \mathscr{S}$,  we say 
%\begin{align}\label{eq sigma'}
%z_k \stackrel{\pi\sigma}{\to} z \  \ \ \ \  \text{if} \ \ \ \pi_k(z_k) \stackrel{\sigma}{\to} z. 
%\end{align}
%We suppose that the topology $\sigma$ satisfies  
%\begin{align}\label{compatibility}
%\begin{split}
%z_k \stackrel{\pi\sigma}{\to} z, &\ \  \bar{z}_k \stackrel{\pi\sigma}{\to} \bar{z}  \ \ \  \Rightarrow \ \ \ \liminf_{k \to \infty} \mathcal{D}_k(z_k,\bar{z}_k) \ge  \mathcal{D}_0(z,\bar{z})
%\end{split}
%\end{align}
%for all $z, \bar z \in \mathscr{S}_0$.

We assume that there is a possibly weaker \EEE topology $\sigma$ on $\mathscr{S}$. \BBB Given a sequence $(z_k)_k$, $z_k \in \mathscr{S}_k$, and $z \in \mathscr{S}$,  we say $z_k \stackrel{\pi\sigma}{\to} z$   if $\pi_k(z_k) \stackrel{\sigma}{\to} z$.
We suppose that the topology $\sigma$ satisfies  \EEE
\begin{align}\label{compatibility}
\begin{split}
z_k \stackrel{\pi\sigma}{\to} z, &\ \  \bar{z}_k \stackrel{\pi\sigma}{\to} \bar{z}  \ \ \  \Rightarrow \ \ \ \liminf_{k \to \infty} \mathcal{D}_k(z_k,\bar{z}_k) \ge  \mathcal{D}_0(z,\bar{z})
\end{split}
\end{align}
for all $z, \bar z \in \mathscr{S}_0$.
Moreover, we assume that for every sequence $(z_k)_k$, $z_k \in \mathscr{S}_k$, and $N \in \N$ we have
\begin{align}\label{basic assumptions2}
\begin{split}
	\phi_k(z_k) \leq N \quad \Rightarrow \ \ \ z_k \stackrel{\pi\sigma}{\to} z \in \mathscr{S}_0 \quad \text{(up to a subsequence)}.
\end{split}
\end{align}
 \BBB Further, we \EEE suppose lower semicontinuity of the energies and the slopes in the following sense: for all $z \in \mathscr{S}_0$ and $(z_k)_k$, $z_k \in \mathscr{S}_k$, we have
\begin{align}\label{eq: implication}
\begin{split}
z_k \stackrel{\pi\sigma}{\to}  z \ \ \ \ \  \Rightarrow \ \ \ \ \  \liminf_{k \to \infty} |\partial \phi_{k}|_{\mathcal{D}_{k}} (z_{k}) \ge |\partial \phi_0|_{\mathcal{D}_0} (z), \ \ \ \ \  \liminf_{k \to \infty} \phi_{k}(z_{k}) \ge \phi_0(z).
\end{split}
\end{align}

We now formulate the main convergence result of time-discrete solutions to curves of maximal slope, proved in \cite[Section 2]{Ortner}.

\begin{theorem}\label{th:abstract convergence 2}
Suppose that   \eqref{compatibility}--\eqref{eq: implication} hold. Moreover, assume that    $|\partial \phi_0|_{\mathcal{D}_0}$ is a  strong upper gradient for $ \phi_0 $.  Consider a  null sequence $(\tau_k)_k$. Let   $(Y^0_{k,\tau_k})_k$ with $Y^0_{k,\tau_k} \in \mathscr{S}_k$  and $\bar{z}_0 \in \mathscr{S}_0$ be initial data satisfying 
\begin{align*}
& \ \  Y^0_{k,\tau_k} \stackrel{\pi\sigma}{\to} \bar{z}_0 , \ \ \ \ \  \phi_k(Y^0_{k,\tau_k}) \to \phi_0(\bar{z}_0).
\end{align*}
Then, for each sequence of discrete solutions $(\bar{Y}_{k,\tau_k})_k$  starting from $(Y^0_{k,\tau_k})_k$, there exists a limiting function $z\colon [0,+\infty) \to \mathscr{S}_0$ such \BBB that, up to a  subsequence, not relabeled, \EEE
$$\bar{Y}_{k,\tau_k}(t) \stackrel{\pi\sigma}{\to} z(t), \ \ \ \ \ \phi_k(\bar{Y}_{k, \tau_k}(t)) \to \phi_0(z(t)) \ \ \  \ \ \ \ \ \forall t \ge 0$$
as $k \to \infty$, and $z$ is a curve of maximal slope for $\phi_0$ with respect to $|\partial \phi_0|_{\mathcal{D}_0}$. In particular,  $z$ satisfies the energy identity 
\begin{align}\label{maximalslope}
\frac{1}{2} \int_0^T |z'|_{\mathcal{D}_0}^2(t) \, \EEE  {\rm d}t \EEE + \frac{1}{2} \int_0^T |\partial \phi_0|_{\mathcal{D}_0}^2(z(t)) \, \EEE {\rm d}t \EEE + \phi_0(z(T)) = \phi_0(\bar{z}_0) \ \  \ \ \ \forall \, T>0. 
\end{align} 

\end{theorem}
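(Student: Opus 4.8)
The plan is to run the De Giorgi minimizing-movement machinery together with the Sandier--Serfaty lower-semicontinuity argument: I will derive a discrete energy-dissipation identity for the scheme \eqref{eq: ds-new1}, pass to the $\liminf$ in it using \eqref{compatibility}--\eqref{eq: implication}, and then close the resulting inequality from above using that $|\partial\phi_0|_{\mathcal{D}_0}$ is a strong upper gradient, thereby forcing the energy identity \eqref{maximalslope}. First I would record the variational structure of the scheme. Testing the minimality of $Y^n_{k,\tau_k}$ against the competitor $v=Y^{n-1}_{k,\tau_k}$ gives $\tfrac{1}{2\tau_k}\mathcal{D}_k(Y^n_{k,\tau_k},Y^{n-1}_{k,\tau_k})^2+\phi_k(Y^n_{k,\tau_k})\le \phi_k(Y^{n-1}_{k,\tau_k})$, which telescopes into a uniform control of the total dissipation $\tfrac12\sum_n \tau_k|\bar Y'_{k,\tau_k}|^2_{\mathcal{D}_k}$ and of the energies $\phi_k(\bar Y_{k,\tau_k}(t))\le \phi_k(Y^0_{k,\tau_k})$, where $|\bar Y'_{k,\tau_k}|_{\mathcal{D}_k}(t):=\mathcal{D}_k(Y^n_{k,\tau_k},Y^{n-1}_{k,\tau_k})/\tau_k$ on $((n-1)\tau_k,n\tau_k]$ is the piecewise-constant discrete metric velocity. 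To bring the slope into play I introduce the De Giorgi variational interpolant $\tilde Y_{k,\tau_k}$, defined on each subinterval by minimizing $\tfrac{1}{2r}\mathcal{D}_k(\cdot,Y^{n-1}_{k,\tau_k})^2+\phi_k$ over the elapsed time $r$, and invoke the sharp discrete energy identity of \cite[Section 3]{AGS},
\[
\tfrac12\int_0^{t}|\bar Y'_{k,\tau_k}|^2_{\mathcal{D}_k}(s)\,{\rm d}s+\tfrac12\int_0^{t}|\partial\phi_k|^2_{\mathcal{D}_k}(\tilde Y_{k,\tau_k}(s))\,{\rm d}s+\phi_k(\bar Y_{k,\tau_k}(t))=\phi_k(Y^0_{k,\tau_k}).
\]

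Second, compactness. From the dissipation bound and the hypothesis $\phi_k(Y^0_{k,\tau_k})\to\phi_0(\bar z_0)$ I obtain a uniform energy bound and a uniform $L^2$-in-time bound on the discrete velocities, hence a discrete H\"older estimate $\mathcal{D}_k(\bar Y_{k,\tau_k}(t),\bar Y_{k,\tau_k}(s))\le C(|t-s|+\tau_k)^{1/2}$. Using \eqref{basic assumptions2} at fixed times together with a refined Arzel\`a--Ascoli / Helly selection argument in the $\sigma$-topology (as in \cite{AGS,S1}) I extract a subsequence and a limit $z\colon[0,\infty)\to\mathscr{S}_0$ with $\bar Y_{k,\tau_k}(t)\stackrel{\pi\sigma}{\to}z(t)$ for every $t$. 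The metric lower-semicontinuity \eqref{compatibility} then upgrades the discrete H\"older bound to $\mathcal{D}_0(z(t),z(s))\le C|t-s|^{1/2}$, so $z$ is absolutely continuous, and by the standard reduction to \eqref{compatibility} on partitions (passing to the supremum over partitions) one gets $\int_0^T|z'|^2_{\mathcal{D}_0}\le\liminf_k\int_0^T|\bar Y'_{k,\tau_k}|^2_{\mathcal{D}_k}$.

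Third, and this is the main obstacle, the passage to the limit in the slope term. I must show that the variational interpolant $\tilde Y_{k,\tau_k}(t)$ converges $\pi\sigma$ to the \emph{same} limit $z(t)$ for a.e.\ $t$, so that the slope lower-semicontinuity in \eqref{eq: implication} can be applied at the correct points. This holds because $\tilde Y_{k,\tau_k}(t)$ is energy-bounded (its energy does not exceed $\phi_k(Y^{n-1}_{k,\tau_k})$), hence $\pi\sigma$-precompact by \eqref{basic assumptions2}, while the integrated gap $\int_0^T\mathcal{D}_k(\tilde Y_{k,\tau_k},\bar Y_{k,\tau_k})^2\,{\rm d}s\to 0$ (again controlled by the dissipation sum) forces, through \eqref{compatibility}, its limit to coincide with $z$. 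Fatou's lemma applied to the nonnegative integrands, together with \eqref{eq: implication}, then yields $\int_0^T|\partial\phi_0|^2_{\mathcal{D}_0}(z(s))\,{\rm d}s\le\liminf_k\int_0^T|\partial\phi_k|^2_{\mathcal{D}_k}(\tilde Y_{k,\tau_k}(s))\,{\rm d}s$, while energy lower-semicontinuity gives $\phi_0(z(t))\le\liminf_k\phi_k(\bar Y_{k,\tau_k}(t))$. Taking the $\liminf$ in the discrete identity and using $\phi_k(Y^0_{k,\tau_k})\to\phi_0(\bar z_0)$ produces
\[
\tfrac12\int_0^{T}|z'|^2_{\mathcal{D}_0}(t)\,{\rm d}t+\tfrac12\int_0^{T}|\partial\phi_0|^2_{\mathcal{D}_0}(z(t))\,{\rm d}t+\phi_0(z(T))\le\phi_0(\bar z_0).
\]

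Finally, I close the estimate from the other side. Since $|\partial\phi_0|_{\mathcal{D}_0}$ is by assumption a strong upper gradient for $\phi_0$ and $z$ is absolutely continuous, Definition~\ref{main def2}(i) combined with Young's inequality gives $\phi_0(\bar z_0)-\phi_0(z(T))\le\int_0^T|\partial\phi_0|_{\mathcal{D}_0}(z(t))\,|z'|_{\mathcal{D}_0}(t)\,{\rm d}t\le\tfrac12\int_0^T|\partial\phi_0|^2_{\mathcal{D}_0}(z(t))\,{\rm d}t+\tfrac12\int_0^T|z'|^2_{\mathcal{D}_0}(t)\,{\rm d}t$. Together with the preceding display this forces equality throughout, which is exactly the energy identity \eqref{maximalslope}; differentiating in $T$ and using that equality holds in Young's inequality shows $\tfrac{\rm d}{{\rm d}t}\phi_0(z(t))\le-\tfrac12|z'|^2_{\mathcal{D}_0}(t)-\tfrac12|\partial\phi_0|^2_{\mathcal{D}_0}(z(t))$ for a.e.\ $t$, so $z$ is a curve of maximal slope in the sense of Definition~\ref{main def2}(iii). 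As all intermediate inequalities are in fact equalities, the convergence $\phi_k(\bar Y_{k,\tau_k}(t))\to\phi_0(z(t))$ for every $t$ follows as well.
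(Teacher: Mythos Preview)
Your sketch is essentially the standard argument from \cite[Section~2]{Ortner} and \cite[Chapter~3]{AGS}, which is exactly what the paper invokes: the authors do not give their own proof but simply attribute the result to that literature (together with the Sandier--Serfaty framework \cite{S2} and the discussion in \cite[Section~3]{MFMKDimension}). So your approach coincides with what the paper relies on.

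One small correction: the sharp De Giorgi estimate you quote is an \emph{inequality}, not an identity. In \cite[Lemma~3.2.2, Corollary~3.2.3]{AGS} one obtains
\[
\tfrac12\int_0^{t}|\bar Y'_{k,\tau_k}|^2_{\mathcal{D}_k}(s)\,{\rm d}s+\tfrac12\int_0^{t}|\partial\phi_k|^2_{\mathcal{D}_k}(\tilde Y_{k,\tau_k}(s))\,{\rm d}s+\phi_k(\bar Y_{k,\tau_k}(t))\le\phi_k(Y^0_{k,\tau_k}),
\]
and this inequality is all you need, since the reverse direction comes only at the very end from the strong upper gradient property of $|\partial\phi_0|_{\mathcal{D}_0}$. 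Apart from this harmless slip, your compactness step, the identification of the limit of the variational interpolant via \eqref{basic assumptions2} and \eqref{compatibility}, the use of Fatou and \eqref{eq: implication} for the slope term, and the final closing via Young's inequality are all correct and are precisely the ingredients of the cited proofs.
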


The statement is a combination of convergence results for curves of maximal slope \cite{S2}  with their approximation by time-discrete solutions via the minimizing movement scheme. For a more detailed discussion of similar statements, we refer \EEE for example \EEE to \cite[Section 3]{MFMKDimension}.

 \subsection{Curves of maximal slope as limits of continuous solutions}\label{sec: auxi-proofs2}

%We refer to \RRR \cite[Theorem 3.6]{MFMK} \EEE for an abstract convergence result for curves of maximal slope in a setting \BBB where \EEE conditions  \eqref{compatibility}-\eqref{eq: implication} hold. 

As before, $(\mathscr{S}_k, \mathcal{D}_k)_k$ and  $(\mathscr{S}_0,\mathcal{D}_0)$ denote complete metric spaces, with \ZZZ corresponding functionals \EEE $(\phi_k)_k$ and  $\phi_0$.  \EEE For the relation of the two- and one-dimensional systems, we will use the following result.

\begin{theorem}\label{thm: sandierserfaty}
	
%Consider sets $\mathscr{S}_k$, $\mathscr{S} \supset \mathscr{S}_0$, metrics $(\mathcal{D}_n)_{n \in \N}$ and functionals $\phi_n: \mathscr{S}_k \to [0,\infty]$, $n \in \N$, as well as $\mathcal{D}_0$ and $\phi_0: \mathscr{S}_0 \to [0,\infty]$.

\EEE Suppose that   \eqref{compatibility}--\eqref{eq: implication} hold. \EEE Moreover, assume that $\vert \partial \phi_n \vert_{\mathcal{D}_n}$, $\vert \partial \phi_0 \vert_{\mathcal{D}_0}$ are strong upper gradients for $\phi_n$, $\phi_0$ with respect to $\mathcal{D}_n$, $\mathcal{D}_0$, respectively.
Let $\bar u \in \mathscr{S}_0$. For all $n \in \N$, let $u_n$ be a curve of maximal slope for $\phi_n$ with respect to $\vert \partial \phi_n \vert_{\mathcal{D}_n}$ such that
\begin{align*}
{\rm(i)} \quad & \sup\limits_{n \in \N} \sup \limits_{t \geq 0}  \, \phi_n(u_n(t))  < \infty \\
{\rm(ii)} \quad & u_n(0) \stackrel{\pi\sigma}{\to} \bar u, \quad \phi_n(u_n(0)) \to \phi_0(\bar u).
\end{align*}
Then, there exists a limiting function $u\colon [0,\infty) \to \mathscr{S}_0$ such that up to a subsequence, not relabeled,
\begin{align*}
u_n(t) \stackrel{\pi\sigma}{\to} u(t), \quad \phi_n(u_n(t)) \to \phi_0 (u(t)) \quad \forall t \geq 0
\end{align*}
as $n \to \infty$ and $u$ is a curve of maximal slope for $\phi_0$ with respect to $\vert \partial \phi_0 \vert_{\mathcal{D}_0}$.
\end{theorem}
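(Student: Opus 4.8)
\emph{Proof sketch.} The plan is to follow the abstract scheme of Sandier and Serfaty \cite{S1,S2}; the argument runs parallel to the time-discrete version in Theorem \ref{th:abstract convergence 2} and to \cite[Section 3]{MFMKDimension}. The starting point is the observation that, since $\vert \partial \phi_n\vert_{\mathcal{D}_n}$ is a strong upper gradient for $\phi_n$, every curve of maximal slope $u_n$ actually satisfies the \emph{energy identity}
\begin{align*}
\frac{1}{2} \int_0^T \vert u_n'\vert_{\mathcal{D}_n}^2(t)\,{\rm d}t + \frac{1}{2} \int_0^T \vert \partial \phi_n\vert_{\mathcal{D}_n}^2(u_n(t))\,{\rm d}t + \phi_n(u_n(T)) = \phi_n(u_n(0))
\end{align*}
for all $T>0$: the inequality ``$\le$'' is the defining inequality of a curve of maximal slope integrated in time, and the reverse inequality follows from the strong-upper-gradient property combined with Young's inequality. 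By $(i)$ and $(ii)$ the right-hand side is bounded uniformly in $n$, and since $\phi_n\ge 0$ we obtain, for every fixed $T>0$, a uniform bound on $\int_0^T \vert u_n'\vert_{\mathcal{D}_n}^2$, on $\int_0^T \vert \partial \phi_n\vert_{\mathcal{D}_n}^2(u_n(\cdot))$, and on $\sup_{[0,T]}\phi_n(u_n(\cdot))$.

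Next I would extract a limiting curve. From the velocity bound and the Cauchy--Schwarz inequality, $\mathcal{D}_n(u_n(s),u_n(t)) \le \vert t-s\vert^{1/2}\big(\int_s^t \vert u_n'\vert_{\mathcal{D}_n}^2\big)^{1/2}$, so the curves are uniformly $\tfrac12$-H\"older in $\mathcal{D}_n$. Using the energy bound and the compactness assumption \eqref{basic assumptions2}, for each $t$ in a countable dense set $Q\subset[0,\infty)$ one finds, along a diagonal subsequence, limits $u(t)\in\mathscr{S}_0$ with $u_n(t)\stackrel{\pi\sigma}{\to} u(t)$; by \eqref{compatibility} these satisfy $\mathcal{D}_0(u(s),u(t))\le C_T\vert t-s\vert^{1/2}$, so $u$ extends to a locally $\tfrac12$-H\"older, in particular absolutely continuous, curve $[0,\infty)\to(\mathscr{S}_0,\mathcal{D}_0)$, and a further use of \eqref{basic assumptions2} and \eqref{compatibility} upgrades the convergence $u_n(t)\stackrel{\pi\sigma}{\to} u(t)$ to \emph{all} $t\ge 0$; moreover $u(0)=\bar u$ by $(ii)$. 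Then I pass to the liminf in the energy identity. For the dissipation term, lower semicontinuity of the slopes \eqref{eq: implication} together with Fatou's lemma gives $\liminf_n\int_0^T\vert\partial\phi_n\vert_{\mathcal{D}_n}^2(u_n(t))\,{\rm d}t \ge \int_0^T\vert\partial\phi_0\vert_{\mathcal{D}_0}^2(u(t))\,{\rm d}t$; for the terminal energy, \eqref{eq: implication} gives $\liminf_n\phi_n(u_n(T))\ge\phi_0(u(T))$. For the kinetic term, extract $\vert u_n'\vert_{\mathcal{D}_n}\rightharpoonup A$ weakly in $L^2(0,T)$; since $\int_s^t A\,{\rm d}r = \lim_n\int_s^t\vert u_n'\vert_{\mathcal{D}_n}\,{\rm d}r \ge \liminf_n \mathcal{D}_n(u_n(s),u_n(t)) \ge \mathcal{D}_0(u(s),u(t))$ by \eqref{compatibility}, the function $A$ is admissible in the definition of absolute continuity of $u$, whence $\vert u'\vert_{\mathcal{D}_0}\le A$ a.e.\ and, by weak lower semicontinuity of the $L^2$-norm, $\int_0^T\vert u'\vert_{\mathcal{D}_0}^2 \le \int_0^T A^2 \le \liminf_n\int_0^T\vert u_n'\vert_{\mathcal{D}_n}^2$.

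Combining these bounds and using $\phi_n(u_n(0))\to\phi_0(\bar u)=\phi_0(u(0))$, the energy identity for $u_n$ passes in the limit to
\begin{align*}
\frac{1}{2}\int_0^T\vert u'\vert_{\mathcal{D}_0}^2 + \frac{1}{2}\int_0^T\vert\partial\phi_0\vert_{\mathcal{D}_0}^2(u(t))\,{\rm d}t + \phi_0(u(T)) \le \phi_0(u(0)).
\end{align*}
On the other hand, since $\vert\partial\phi_0\vert_{\mathcal{D}_0}$ is a strong upper gradient for $\phi_0$ and $u$ is absolutely continuous, the chain-rule inequality together with Young's inequality yields the reverse inequality, so both are equalities; this is precisely the energy identity \eqref{maximalslope} for $u$, which forces the curve-of-maximal-slope inequality for a.e.\ $t$, i.e.\ $u$ is a curve of maximal slope for $\phi_0$ with respect to $\vert\partial\phi_0\vert_{\mathcal{D}_0}$. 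Finally, the energy convergence $\phi_n(u_n(t))\to\phi_0(u(t))$ for every $t\ge0$ follows by the standard bootstrap: ``$\liminf$'' is \eqref{eq: implication}, and ``$\limsup$'' is obtained by writing the energy identity for $u_n$ on $[0,t]$, taking the limsup, inserting the liminf bounds for the kinetic and dissipation integrals over $[0,t]$ and $\phi_n(u_n(0))\to\phi_0(\bar u)$, and comparing with the (now established) energy identity for $u$ on $[0,t]$. The hard part will be the lower semicontinuity of the kinetic term together with the verification that the limit curve is absolutely continuous with respect to $\mathcal{D}_0$ and stays in $\mathscr{S}_0$ — this is exactly where the compatibility condition \eqref{compatibility} and the compactness condition \eqref{basic assumptions2} enter essentially; once these are available, the remainder is the robust Sandier--Serfaty machinery and no new idea beyond \cite{S1,AGS} is required. \hfill$\Box$
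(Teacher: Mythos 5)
Your sketch is correct and follows essentially the same route as the paper, which gives no independent argument but refers to \cite{S2} and \cite[Theorem 3.6]{MFMK}: that proof is exactly the Sandier--Serfaty scheme you reproduce (energy identity from the strong-upper-gradient property, equi-continuity and diagonal extraction via \eqref{basic assumptions2} and \eqref{compatibility}, lower semicontinuity of slope, energy and metric derivative via \eqref{eq: implication} and the weak $L^2$ limit $A$, then the chain-rule/Young argument and the limsup bootstrap for energy convergence). One cosmetic slip: $\tfrac12$-H\"older continuity does not by itself imply absolute continuity, but this is harmless because the bound $\mathcal{D}_0(u(s),u(t))\le\int_s^t A$ that you derive afterwards is the correct justification that the limit curve is absolutely continuous.
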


  The result is a variant of \cite{S2} and \EEE is given in \cite[Theorem 3.6]{MFMK}, with the only difference being that here we consider a sequence of spaces instead of a fixed space. The generalization is straightforward and follows from standard adaptions.

\section{Properties of energies and dissipation distances}\label{sec:energy-dissipation}

In this section, we collect   basic   properties of the energies and dissipation distances,   and we establish properties for the local slopes. We  recall the definition of the energy and the dissipation distance in \eqref{def:vK2D}, \eqref{def:enegeryphi} and \eqref{eq: metriceps}, \eqref{def:metric}, respectively. We also recall the notation for the sublevel sets  $\mathscr{S}^{2D}_{\eps,M} = \lbrace (u,v) \in \mathscr{S}_\eps^{2D}: \phi_\eps(u,v) \le   M \rbrace$.  In what follows, we assume that $f^{2D},g^{2D}=0$ for the sake of simplicity, see \eqref{def:vK2D}. Indeed, \EEE the force terms can be included in the analysis by minor, standard modifications. \EEE

\subsection{Properties in 2D}

In this subsection, we state the relevant properties of the local slopes in the two-dimensional setting, which are provided by the following lemma.

%In this subsection, we collect the relevant properties of the 2D model which will be instrumental to \EEE use the theory in \cite{AGS}. 

\begin{lemma}[Properties of the two-dimensional \BBB setting\EEE]\label{th: metric space-lin} \label{lem: rep2d} \label{thm:stronguppergradient2d}
	Let $M>0$. We have:
	\begin{itemize}
	\item[(i)] $({\mathscr{S}}^{2D}_{\eps, M},  {\mathcal{D}}_\eps)$ is a complete metric space. \EEE
		\item[(ii)] Let $\Phi^1(t) := \sqrt{t^2 + C  t^3 + C  t^4}$ and $\Phi_{M}^2(t):= C\sqrt{ M} t^2 + C t^3 +  Ct^4$ for any $C>0$ large enough. \EEE Suppose that $(u,v) \in \mathscr{S}^{2D}_{\eps,M}$. Then, the local slope for the energy $\phi_\eps$ admits the representation
		\begin{align*}
		\vert \partial \phi_\eps \vert_{\mathcal{D}_\eps}(u,v) := \sup\limits_{\stackrel{(\tilde u, \tilde v) \in \mathscr{S}_\eps^{2D}}{(u,v)\neq(\tilde u, \tilde v)}} \frac{\big(\phi_\eps(u,v) - \phi_\eps (\tilde u , \tilde v) - \Phi_{M}^2\big(\mathcal{D}_\eps\big((u,v),(\tilde u, \tilde v )\big)\big)\big)^+}{\Phi^1\big(\mathcal{D}_\eps\big((u,v),(\tilde u, \tilde v)\big)\big)}.
		\end{align*}
		\item[(iii)] The local slope $\vert \partial \phi_\eps \vert_{\mathcal{D}_\eps}$ is a strong upper gradient for $\phi_\eps$.
	\end{itemize}
\end{lemma}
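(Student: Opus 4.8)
\textbf{Proof plan for Lemma \ref{thm:stronguppergradient2d}.}

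The three statements are proven in order, with (ii) providing the key technical device that makes (iii) accessible. The plan is as follows.

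\emph{Part (i).} Completeness of $({\mathscr{S}}^{2D}_{\eps,M}, {\mathcal{D}}_\eps)$ follows the standard scheme. First one checks that $\mathcal{D}_\eps$ is a genuine metric on $\mathscr{S}^{2D}_\eps$: symmetry and the triangle inequality are immediate from the quadratic structure in \eqref{eq: metriceps} and the triangle inequality for the $L^2$-seminorm $F \mapsto (\int_{S_\eps} Q_R^2(F))^{1/2}$, while positivity (separation of points) uses that $Q_R^2$ is positive definite on $\R^{2\times 2}_{\rm sym}$ together with a Korn--Poincar\'e argument for functions with prescribed trace on $\partial I \times (-\frac{\eps}{2},\frac{\eps}{2})$: $\mathcal{D}_\eps((u,v),(\tilde u,\tilde v))=0$ forces $\nabla^2 v = \nabla^2\tilde v$, hence $v-\tilde v$ is affine, and together with the boundary conditions $v=\tilde v$; then $e(\tilde u)-e(u)=0$ and again the boundary conditions give $u=\tilde u$. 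For completeness, take a Cauchy sequence $(u_k,v_k)$ in $\mathscr{S}^{2D}_{\eps,M}$; the metric bounds control $\|\nabla^2(v_k-v_j)\|_{L^2}$ and $\|e(u_k)-e(u_j) + \frac12\nabla v_k\otimes\nabla v_k - \frac12\nabla v_j\otimes \nabla v_j\|_{L^2}$. From the energy bound $\phi_\eps(u_k,v_k)\le M$ one extracts (via Korn and Poincar\'e with boundary data) uniform $W^{2,2}$-bounds on $v_k$ and $W^{1,2}$-bounds on $u_k$, so a subsequence converges weakly; the Cauchy property in $\mathcal{D}_\eps$ upgrades this to strong convergence of the relevant combinations, the limit lies in $\mathscr{S}^{2D}_\eps$, and lower semicontinuity of $\phi_\eps$ (Part of Theorem \ref{th: Gamma}, or direct weak lsc of the convex-plus-quadratic integrand) keeps it in the sublevel set. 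I would likely defer the routine parts of this to the appendix.

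\emph{Part (ii).} This is the heart of the lemma. By definition $|\partial\phi_\eps|_{\mathcal{D}_\eps}(u,v) = \limsup_{(\tilde u,\tilde v)\to(u,v)} \frac{(\phi_\eps(u,v)-\phi_\eps(\tilde u,\tilde v))^+}{\mathcal{D}_\eps((u,v),(\tilde u,\tilde v))}$, and one wants to replace this by the \emph{global} expression with the correction terms $\Phi^1,\Phi_M^2$. The mechanism is a ``generalized convexity along geodesics'' estimate: for $(u,v),(\tilde u,\tilde v)\in\mathscr{S}^{2D}_{\eps,M}$ one constructs an interpolation $(u_s,v_s)$, $s\in[0,1]$, with $(u_0,v_0)=(u,v)$, $(u_1,v_1)=(\tilde u,\tilde v)$ --- the natural choice is the linear interpolation $v_s = (1-s)v+s\tilde v$ and $u_s$ chosen so that the membrane strain $e(u_s)+\frac12\nabla v_s\otimes\nabla v_s$ interpolates linearly (this is possible since $e$ is surjective onto its range modulo the nonlinear correction, adjusting $u_s$ by a suitable quadratic-in-$s$ term). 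Along this curve one shows
\begin{align*}
\phi_\eps(u,v)-\phi_\eps(\tilde u,\tilde v) \le \mathcal{D}_\eps((u,v),(\tilde u,\tilde v))\,|\partial\phi_\eps|_{\mathcal{D}_\eps}(u,v) \cdot(1+\text{err}) + \text{err}',
\end{align*}
where the error terms are controlled by $\mathcal{D}_\eps$ to powers $\ge 3$ (coming from the cubic and quartic terms in $\nabla v$ in the membrane energy and the corresponding terms in the metric) and by $\sqrt M$ times $\mathcal{D}_\eps^2$ (from cross terms involving the bounded energy). Rearranging and taking the supremum gives the "$\ge$'' inequality in the claimed representation; the "$\le$'' inequality is automatic since plugging $\tilde u\to u$, $\tilde v\to v$ into the global expression and noting $\Phi^2_M(t)/\Phi^1(t)\to 0$, $\Phi^1(t)/t \to 1$ as $t\to 0$ recovers exactly the $\limsup$ defining the local slope. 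I expect this construction of the interpolation and the bookkeeping of the remainder terms to be the main obstacle: one must verify that the interpolants stay in the sublevel set (possibly after enlarging $M$, which is why $\Phi^2_M$ carries the $\sqrt M$) and that the error terms genuinely have the stated order in $\mathcal{D}_\eps$ uniformly in $\eps$.

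\emph{Part (iii).} Given the representation in (ii), the strong upper gradient property follows from an abstract principle: whenever the local slope of a functional admits a representation as a supremum of difference quotients with a metric-continuous ``modulus'' correction (here $\Phi^1$ is increasing with $\Phi^1(0)=0$ and $\Phi^2_M(t)=o(\Phi^1(t))$), the resulting slope is automatically lower semicontinuous and coincides with the relaxed slope, hence is a strong upper gradient --- this is the content of \cite[Theorem 1.2.5, Corollary 2.4.10]{AGS} or the analogous statement used in \cite{MFMKDimension}. Concretely, for an absolutely continuous curve $t\mapsto(u(t),v(t))$ one writes $\phi_\eps(u(t),v(t))-\phi_\eps(u(s),v(s))$ as an integral of $\frac{d}{dr}\phi_\eps(u(r),v(r))$, bounds the integrand by $|\partial\phi_\eps|_{\mathcal{D}_\eps}(u(r),v(r))\,|(u,v)'|_{\mathcal{D}_\eps}(r)$ using (ii) together with the chain rule (the correction terms $\Phi^2_M$ vanish to higher order and do not contribute), and concludes. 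The Borel measurability of $r\mapsto|\partial\phi_\eps|_{\mathcal{D}_\eps}(u(r),v(r))$ along absolutely continuous curves follows from the lower semicontinuity just noted. This part is essentially a citation once (ii) is in place.
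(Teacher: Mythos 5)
Your overall strategy for (ii)--(iii) (generalized convexity along interpolations plus the asymptotics $\Phi^1(t)/t\to 1$, $\Phi^2_M(t)/t\to 0$, then the global-slope comparison for the strong upper gradient property) is the right one; in fact the paper does not reprove any of this but simply cites the companion paper \cite{MFMKDimension} (Lemmas 4.6, 4.8, 4.9) for (i)--(iii), so your proposal is a more self-contained reconstruction of that argument. However, it has two concrete problems. First, your interpolation in (ii) is not constructible: you propose to choose $u_s$ so that the membrane strain $e(u_s)+\tfrac12\nabla v_s\otimes\nabla v_s$ interpolates linearly, which would require $e(u_s)=(1-s)e(u)+s\,e(\tilde u)+\tfrac{s(1-s)}{2}\nabla(v-\tilde v)\otimes\nabla(v-\tilde v)$; but $\tfrac12\nabla w\otimes\nabla w$ is in general \emph{not} a symmetrized gradient (its incompatibility is essentially $-\det\nabla^2 w$), so no such $u_s$ exists unless $\det\nabla^2(v-\tilde v)=0$. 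The actual argument (as in \cite{MFMKDimension}, and as carried out in the one-dimensional analogue in Lemma \ref{PhiMPhi} of the Appendix) takes the \emph{plain} linear interpolation of $(u,v)$ and keeps the quadratic term $\tfrac12\nabla(v-\tilde v)\otimes\nabla(v-\tilde v)$ as an error, estimated via the embedding $W^{1,2}\subset\subset L^4$; your "error terms of order $\ge 3$" bookkeeping then applies to this term, so the proof can be repaired, but the construction as stated would fail.

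Second, and more importantly for this paper, the only genuinely new content of the lemma relative to \cite{MFMKDimension} is that the constant $C$ in $\Phi^1$, $\Phi^2_M$ can be chosen \emph{independently of $\eps$}; you flag this as "must verify uniformly in $\eps$" but give no mechanism. The point is precisely the scaling of the above error term on the thin domain: by the scaled Sobolev embedding one has
\begin{align*}
\big\Vert \tfrac12\nabla(v_0-v_1)\otimes\nabla(v_0-v_1)\big\Vert_{L^2(S_\eps)}
\le C\Vert \nabla(v_0-v_1)\Vert^2_{L^4(S_\eps)}
\le C\eps^{-1/2}\Vert v_0-v_1\Vert^2_{W^{2,2}(S_\eps)}
\le C\sqrt{\eps}\,\mathcal{D}_\eps^2\big((u_0,v_0),(u_1,v_1)\big),
\end{align*}
where the last step uses the $\tfrac1\eps$ normalization in \eqref{eq: metriceps}, Poincar\'e, and the positivity of $Q_R^2$. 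Without identifying this cancellation between the $\eps^{-1/2}$ from the embedding and the $\eps^{-1}$ prefactor in $\mathcal{D}_\eps$, the $\eps$-uniformity of $\Phi^1,\Phi^2_M$ (which is essential later, in the lower semicontinuity of slopes in Theorem \ref{theorem: lsc-slope}) is left unproved. Part (i) and part (iii) of your plan are fine in outline, matching, respectively, the cited \cite[Lemma 4.6]{MFMKDimension} and the global-slope argument of \cite[Lemma 4.9]{MFMKDimension} (mirrored in the one-dimensional proof in the Appendix), though in (iii) note that writing $\phi_\eps$ along the curve as an integral of its derivative presupposes the absolute continuity you are trying to establish; the correct route is to bound the global slope on sublevel sets by $C_1\vert\partial\phi_\eps\vert_{\mathcal{D}_\eps}+C_2$ using (ii).
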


\begin{proof}
Item (i) is proved in \cite[Lemma 4.6]{MFMKDimension}. \EEE  For the \EEE proof of (ii), we refer to \cite[Lemmas 4.8 and 4.9]{MFMKDimension}. One only needs to ensure \EEE that the constant $C$ can be chosen \emph{independently of $\eps$}. The crucial step is the scaling of the embedding $W^{1,2}(S_\eps) \subset \subset L^4(S_\eps)$ on the thin domain $S_\eps$ in order to deal with the non-linearity $\nabla v \otimes \nabla v$ in the energy \eqref{def:vK2D} and the metric \eqref{eq: metriceps}. More precisely, \EEE by a scaling argument, \cite[(4.15)]{MFMKDimension} can  be replaced by  
\begin{align*}
\Vert \tfrac{1}{2}\BBB \nabla(v_0 - v_1) \otimes \nabla(v_0 - v_1)  \Vert_{L^2(S_\eps)} &\le \BBB C\Vert \nabla (v_0 - v_1)  \Vert^2_{L^4(S_\eps)} \leq C \eps^{-1/2}  \Vert v_0 - v_1 \Vert^2_{W^{2,2}(S_\eps)}\\ &\le C\sqrt{\eps} \mathcal{D}^2_\eps\big((u_0,v_0),(u_1, v_1)\big)
\end{align*}
%\RRR vielleicht koennten wir die Gleichung auch folgendermassen schreiben und der Zussamnhang zum anderen Paper ist noch klarer. Entscheide du einfach, was du besser findest. 
%\begin{align*}
%\eps^{-1}\Vert \tfrac{1}{2} \nabla(v_0 - v_1) \otimes \nabla(v_0 - v_1)  \Vert^2_{L^2(S_\eps)} &\le  C\eps^{-1}\Vert \nabla (v_0 - v_1)  \Vert^4_{L^4(S_\eps)} \leq C \eps^{-2}  \Vert v_0 - v_1 \Vert^4_{W^{2,2}(S_\eps)}\\ &\le C \mathcal{D}^4_\eps\big((u_0,v_0),(u_1, v_1)\big)
%\end{align*}
\EEE 	for $(u_0,v_0), (u_1,v_1) \in \BBB \mathscr{S}^{2D}_{\eps,M}\EEE$, where the last inequality follows from \eqref{eq: metriceps}, \BBB Poincar\'e's inequality, \EEE and the \EEE positivity \EEE of $Q^2_R$. This  \EEE is sufficient to adapt the proof of \cite[Lemma 4.8]{MFMKDimension}.  The mappings $\Phi^1$ and $\Phi^2_M$ have been introduced after \cite[(4.19)]{MFMKDimension} and before  \cite[(4.21)]{MFMKDimension}, respectively. Item \EEE (iii) is also a consequence of \cite[Lemma 4.9]{MFMKDimension}.
\end{proof}

\subsection{Properties in 1D}\label{sec:prop1d}
\BBB
In this subsection, we derive properties in the one-dimensional setting. We mainly need a one-dimensional version of Lemma \ref{lem: rep2d}  \EEE as well as some basic properties of the metric space $(\mathscr{S}^{1D},\mathcal{D}_0)$ and the energy $\phi_0$. \EEE 	\ZZZ The proofs are analogous to the corresponding  proofs in \cite{MFMK, MFMKDimension}. We refer to  Appendix~\ref{sec:Appendix} where we give the main arguments for the reader's convenience.\EEE
%
%the facts that the one-dimensional local slope $\vert \partial \phi_0 \vert_{\mathcal{D}_0}$ is a strong upper gradient (see Theorems \ref{th:abstract convergence 2} and \ref{thm: sandierserfaty}). 
\begin{lemma}[Properties of $(\mathscr{S}^{1D},\mathcal{D}_0)$ and $\phi_0$]\label{lem:complete1d}
Consider the canonical norm $\Vert \cdot \Vert_{can}$ on $\mathscr{S}^{1D}$ which is defined as  
\begin{align*}
\Vert (y,w,\theta) \Vert_{can}:= \Vert y \Vert_{W^{1,2}(S;\R^2)} + \Vert w \Vert_{W^{2,2}(I)} + \Vert \theta \Vert_{W^{1,2}(I)}.
\end{align*} 
\begin{itemize}
\item[(i)] {\rm Completeness:} $(\mathscr{S}^{1D},\mathcal{D}_0)$ is a complete metric space.
\item[(ii)]  {\rm Topology:} The topology induced by $\Vert \cdot \Vert_{can}$ coincides with the topology induced by $\mathcal{D}_0$. In particular, there \EEE exists \EEE a constant \EEE $C > 0$ \EEE such that   
\begin{align}
&  \Vert{w- \tilde w}\Vert_{W^{2,2}(I)} + \Vert{\theta- \tilde \theta}\Vert_{W^{1,2}(I)} \le C \mathcal{D}_0((y,w,\theta),(\tilde y,\tilde w,\tilde\theta))  \label{ineq:lowerbound} \quad \text{and}\\
&\Vert y-\tilde y \Vert_{W^{1,2}(S;\R^2)} \leq C \mathcal{D}_0((y,w, \theta),(\tilde y, \tilde w, \tilde \theta)) + C\Vert w^\prime +  \tilde w^\prime \Vert_{L^4(I)} \Vert w^\prime  -  \tilde w^\prime \Vert_{L^4(I)}  \label{ineq:lowerboundy}
\end{align}
for $(y,w,\theta), (\tilde y, \tilde w, \tilde \theta) \in \mathscr{S}^{1D}$. 
\item[(iii)] {\rm Compactness: }Let $(y_k,w_k,\theta_k)_k$ be a sequence in $\mathscr{S}^{1D}$ with $\sup_{k\in \N} \BBB \phi_0(y_k,w_k,\theta_k)  \EEE < \infty$. Then,  $\sup_{k\in \N} \Vert(y_k,w_k,\theta_k)\Vert_{can} <+\infty$ and, up to a subsequence, $(y_k,w_k,\theta_k)_{k}$ \EEE  converges weakly   in $(\mathscr{S}^{1D}, \Vert \cdot \Vert_{can})$ \EEE to some $(y,w,\theta) \in \mathscr{S}^{1D}$.
\item[(iv)] {\rm Lower semicontinuity: }If $(y_k,w_k,\theta_k)$ and $(\tilde y_k,\tilde w_k,\tilde \theta_k)$ converge weakly in $(\mathscr{S}^{1D}, \Vert \cdot \Vert_{can})$  to $(y,w,\theta)$ and $(\tilde y, \tilde w, \tilde \theta)$, respectively, we have $\liminf_{k \to \infty} \phi_0(y_k,w_k,\theta_k) \geq \phi_0(y,w,\theta)$ and $\liminf_{k \to \infty} \mathcal{D}_0((y_k,w_k,\theta_k),(\tilde y_k,\tilde w_k,\tilde \theta_k)) \geq \mathcal{D}_0((y,w,\theta),(\tilde y, \tilde w, \tilde \theta)).$ \EEE
\end{itemize}
\end{lemma}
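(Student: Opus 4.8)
The four statements all follow from the structure of the quadratic forms $Q_R^1$, $Q_W^0$, $Q_W^1$ (positive definite on the relevant subspaces) together with standard Poincaré and Korn-type inequalities on $I$ and $S$; the architecture mirrors \cite[Lemma 4.6]{MFMKDimension} and the analogous arguments in \cite{MFMK}, so I will only highlight the one-dimensional specifics.

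\smallskip
\emph{Step 1: the lower bounds \eqref{ineq:lowerbound} and \eqref{ineq:lowerboundy}.} Since $Q_R^1$ is positive definite on $\R^2$, the bending-twisting part of $\mathcal{D}_0^2$ controls $\Vert w''-\tilde w''\Vert_{L^2(I)}^2 + \Vert\theta'-\tilde\theta'\Vert_{L^2(I)}^2$; because $w-\tilde w \in W_0^{2,2}(I)$ (equal boundary data up to first order, as $w,\tilde w\in W_{\hat v}^{2,2}(I)$) and $\theta-\tilde\theta\in W_0^{1,2}(I)$, two applications of Poincaré's inequality upgrade this to the full $W^{2,2}\times W^{1,2}$ norm, giving \eqref{ineq:lowerbound}. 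For \eqref{ineq:lowerboundy}, positivity of $Q_R^0$ gives control of $\Vert \partial_1 y_1 - \partial_1\tilde y_1 + \tfrac12|w'|^2 - \tfrac12|\tilde w'|^2\Vert_{L^2(S)}$; writing $\tfrac12|w'|^2-\tfrac12|\tilde w'|^2 = \tfrac12(w'+\tilde w')(w'-\tilde w')$ and estimating this term in $L^2$ by Hölder with the product of $L^4$ norms, we obtain control of $\Vert\partial_1 y_1 - \partial_1\tilde y_1\Vert_{L^2(S)}$ up to the stated remainder. Since $y-\tilde y \in BN_0(S;\R^2)$ (the difference has zero boundary data and is a Bernoulli-Navier field), the representation \eqref{def:BN} shows $y-\tilde y$ is determined by $\xi_1-\tilde\xi_1 \in W_0^{1,2}(I)$ and $\xi_2-\tilde\xi_2\in W_0^{2,2}(I)$; here $e(y-\tilde y)_{22}=e(y-\tilde y)_{12}=0$, so the only nonzero strain component is $\partial_1(y_1-\tilde y_1) = (\xi_1-\tilde\xi_1)' - x_2(\xi_2-\tilde\xi_2)''$, and a Poincaré/Korn argument (or direct integration in $x_2$, using $\int x_2 = 0$, $\int x_2^2 = \tfrac1{12}$) bounds $\Vert y-\tilde y\Vert_{W^{1,2}(S;\R^2)}$ by $\Vert\partial_1(y_1-\tilde y_1)\Vert_{L^2(S)}$, yielding \eqref{ineq:lowerboundy}. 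The reverse bound $\mathcal{D}_0 \le C\Vert\cdot\Vert_{can}$-type estimate (needed for (ii)) follows from continuity of the quadratic forms and of the map $w'\mapsto|w'|^2$ from $W^{2,2}(I)$ (hence $w'\in L^\infty$) into $L^2$.

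\smallskip
\emph{Step 2: completeness (i) and topology (ii).} By Step 1, a $\mathcal{D}_0$-Cauchy sequence has $(w_k,\theta_k)$ Cauchy in $W^{2,2}\times W^{1,2}$, hence convergent to some $(w,\theta)$ with the correct boundary data (closedness of $W_{\hat v}^{2,2}$, $W_0^{1,2}$); then \eqref{ineq:lowerboundy}, together with the $L^4$-convergence of $w_k'$ (from $W^{2,2}\hookrightarrow W^{1,4}$), shows $y_k$ is Cauchy in $W^{1,2}(S;\R^2)$, with limit $y\in BN_{(\hat u_1,\hat u_2)}(S;\R^2)$ since this space is weakly (hence strongly) closed; finally the continuity direction of (ii) gives $\mathcal{D}_0((y_k,w_k,\theta_k),(y,w,\theta))\to 0$. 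The equivalence of topologies in (ii) is then the combination of the two one-sided bounds.

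\smallskip
\emph{Step 3: compactness (iii) and lower semicontinuity (iv).} For (iii), coercivity of $\phi_0$: positive definiteness of $Q_W^0$ and $Q_W^1$ bounds $\Vert\partial_1 y_1 + \tfrac12|w'|^2\Vert_{L^2(S)}$, $\Vert w''\Vert_{L^2(I)}$, $\Vert\theta'\Vert_{L^2(I)}$ (the force terms $\int f^{1D}w + g^{1D}\cdot y$ are absorbed via Young's inequality after the Poincaré bounds on $w$ and $y$); Poincaré on $W_{\hat v}^{2,2}$ and $W_0^{1,2}$ controls $w,\theta$; then $|w'|^2 \in L^2$ is bounded, so $\partial_1 y_1$ is bounded in $L^2(S)$, and the Bernoulli-Navier/Korn argument of Step 1 bounds $y$ in $W^{1,2}(S;\R^2)$, giving $\sup_k\Vert(y_k,w_k,\theta_k)\Vert_{can}<\infty$ and weak compactness in the reflexive space $(\mathscr{S}^{1D},\Vert\cdot\Vert_{can})$ (the limit lies in $\mathscr{S}^{1D}$ because $BN_{(\hat u_1,\hat u_2)}$, $W_{\hat v}^{2,2}$, $W_0^{1,2}$ are weakly closed). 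For (iv): along a $\Vert\cdot\Vert_{can}$-weakly convergent sequence, $w_k'\to w'$ strongly in $L^4(I)$ by compact embedding $W^{2,2}(I)\hookrightarrow\hookrightarrow W^{1,4}(I)$, so $|w_k'|^2\to|w'|^2$ strongly in $L^2$, hence $\partial_1(y_k)_1 + \tfrac12|w_k'|^2 \rightharpoonup \partial_1 y_1 + \tfrac12|w'|^2$ weakly in $L^2(S)$; weak lower semicontinuity of the nonnegative quadratic functionals $v\mapsto\int Q_W^0(v)$, $\int Q_W^1$, $\int Q_R^0$, $\int Q_R^1$ on $L^2$, together with (weak, hence strong for the linear force terms) convergence, yields $\liminf\phi_0(y_k,w_k,\theta_k)\ge\phi_0(y,w,\theta)$ and likewise for $\mathcal{D}_0$ (applied to the difference sequences, whose relevant arguments again converge weakly in $L^2$).

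\smallskip
\emph{Main obstacle.} The only genuinely nonroutine point is handling the nonlinear term $\tfrac12|w'|^2 - \tfrac12|\tilde w'|^2$ uniformly — both in proving that $\mathcal{D}_0$-Cauchy sequences force $W^{1,2}$-convergence of $y$ (Step 2) and in the coercivity estimate of Step 3, where one must first bound $w$ in $W^{2,2}$ (via bending + Poincaré) \emph{before} one can control $\partial_1 y_1$ in $L^2$. The compact embedding $W^{2,2}(I)\hookrightarrow\hookrightarrow W^{1,4}(I)$ is what makes the $|w'|^2$ term lower-order and weakly continuous, and is the linchpin of (iv); everything else is a bookkeeping exercise with positive definite quadratic forms and Poincaré/Korn inequalities on a fixed domain.
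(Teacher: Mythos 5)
Your proposal is correct and follows essentially the same route as the paper: positive definiteness of $Q_R^0,Q_R^1$ plus Poincar\'e and Korn--Poincar\'e (via the Bernoulli--Navier structure of $y-\tilde y$) for the lower bounds, the embedding of $W^{2,2}(I)$ into $L^4$-type spaces and H\"older for the reverse bound and the equivalence of topologies, coercivity of $\phi_0$ with the same estimates for compactness, and strong $L^4$ convergence of $w_k'$ combined with convexity of the quadratic forms for lower semicontinuity. The only point you gloss over is that claim (i) also requires checking the metric axioms for $\mathcal{D}_0$ (in particular the triangle inequality), which, as in the paper, is immediate once one observes that $\mathcal{D}_0$ is the norm distance, induced by the positive definite forms $Q_R^0,Q_R^1$, between the images of the two states under the map $(y,w,\theta)\mapsto(\partial_1 y_1+\tfrac12|w'|^2,\,w'',\,\theta')$.
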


%
%The following theorem provides the fact that the one-dimensional slope is a strong upper gradient.

The following \EEE lemma \EEE is the \EEE one-dimensional version of Lemma \ref{lem: rep2d}.
\begin{lemma}[Properties of the one-dimensional slope $\vert \partial{\phi}_0\vert_{\mathcal{D}_0}$]\label{thm:stronguppergradient1d} \label{Thm:representationlocalslope1}
	Let $M>0$. We have:
	\begin{itemize} 
		\item[(i)] The local slope for the energy $\phi_0$ admits the representation
		\begin{align*}
		\vert \partial \phi_0 \vert_{\mathcal{D}_0}(y,w,\theta) := \sup\limits_{(y,w,\theta)\neq(\tilde y, \tilde w, \tilde \theta) \in \mathscr{S}^{1D}} \frac{\big(\phi_0(y,w,\theta) - \phi_0 (\tilde y , \tilde w, \tilde \theta) - \Phi_M^2(\mathcal{D}_0((y,w,\theta),(\tilde y, \tilde w, \tilde \theta))\EEE ) \EEE\big)^+}{\Phi^1(\mathcal{D}_0((y,w,\theta),(\tilde y, \tilde w, \tilde \theta))\EEE )\EEE}
		\end{align*}
		for all $(y,w,\theta)\in \mathscr{S}^{1D}$  satisfying $\phi_0(y,w,\theta) \leq M$, where $\Phi^1$ and $\Phi^2_M$ are defined \BBB in Lemma~\ref{lem: rep2d}(ii).
			\EEE
		\item[(ii)] 	
		The local slope $\vert \partial \phi_0 \vert_{\mathcal{D}_0}$ is a strong upper gradient for $\phi_0$.
		\item[(iii)] {\rm Lower semicontinuity:} If $(y_k,w_k,\theta_k)_k\EEE \subset \EEE \mathscr{S}^{1D}$ converges weakly in $(\mathscr{S}^{1D}, \Vert \cdot \Vert_{can})$ to $(y,w,\theta)\in \mathscr{S}^{1D}$, we have $\liminf_{k \to \infty} \vert \partial \phi_0\vert_{\mathcal{D}_0}(y_k,w_k,\theta_k) \geq \vert \partial \phi_0\vert_{\mathcal{D}_0}(y,w,\theta)$. \EEE
	\end{itemize}
\end{lemma}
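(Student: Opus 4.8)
The plan is to prove Lemma~\ref{thm:stronguppergradient1d} by transferring the strategy used for the two-dimensional analogue Lemma~\ref{lem: rep2d} to the one-dimensional setting, using the structural properties collected in Lemma~\ref{lem:complete1d}. The three items are handled in order, since (ii) depends on (i) and (iii) is a consequence of the representation in (i).

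\emph{Step 1 (representation of the local slope, item (i)).} The plan is to follow \cite[Lemmas 4.8 and 4.9]{MFMKDimension}. One direction is trivial: since $\Phi^2_M(t) \ge 0$ and $\Phi^1(t) \ge t$, the supremum on the right-hand side is bounded above by $\limsup_{(\tilde y,\tilde w,\tilde\theta) \to (y,w,\theta)} (\phi_0(y,w,\theta) - \phi_0(\tilde y,\tilde w,\tilde\theta))^+ / \mathcal{D}_0(\cdot,\cdot)$, which is exactly $|\partial\phi_0|_{\mathcal{D}_0}(y,w,\theta)$ since the correction terms $\Phi^2_M$ vanish to higher order as $\mathcal{D}_0 \to 0$. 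For the reverse inequality, the key is a \emph{generalized $\lambda$-convexity estimate}: along the segment $s \mapsto$ (linear interpolation between $(y,w,\theta)$ and $(\tilde y,\tilde w,\tilde\theta)$ in the variables $y_1, w, \theta$, adjusting $y_2$ via the Bernoulli--Navier identity \eqref{def:BN}), one controls $\phi_0$ from below. Concretely, expanding the quadratic forms $Q^0_W(\partial_1 y_1 + |w'|^2/2)$ and $Q^1_W(w'',\theta')$, the only non-quadratic contributions come from the term $|w'|^2/2$; by the embedding $W^{2,2}(I) \subset\subset W^{1,\infty}(I)$ (the one-dimensional analogue of the $W^{1,2}(S_\eps)\subset\subset L^4(S_\eps)$ estimate used in the proof of Lemma~\ref{th: metric space-lin}), the cubic and quartic remainders are bounded by $C\,\mathcal{D}_0^3 + C\,\mathcal{D}_0^4$ times a power of the energy bound $M$, which is precisely the structure of $\Phi^2_M$. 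This yields that the incremental minimization functional $\mathbf{\Phi}(\tau,\cdot;\cdot)$ is, for small $\tau$, comparable to a genuinely convex one, from which the representation formula follows by the argument of \cite[Lemma 4.9]{MFMKDimension}; crucially, all constants here are genuine one-dimensional constants, with no $\eps$-dependence to track.

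\emph{Step 2 (strong upper gradient, item (ii)).} Given the representation in (i) and the fact that $(\mathscr{S}^{1D},\mathcal{D}_0)$ is a complete metric space with the coercivity/compactness/lower-semicontinuity properties of Lemma~\ref{lem:complete1d}, this is a routine application of the general criterion in \cite[Section 1.3]{AGS} (cf.\ \cite[Lemma 4.9]{MFMKDimension}): the representation shows $|\partial\phi_0|_{\mathcal{D}_0}$ is a supremum of difference quotients corrected by continuous perturbations, hence lower semicontinuous, and combined with the metric structure this gives the strong upper gradient property. I would simply cite the corresponding 2D proof and indicate that no new ingredient is needed.

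\emph{Step 3 (lower semicontinuity of the slope, item (iii)).} This is the technical heart, paralleling \cite[Lemma 4.10]{MFMKDimension}. Let $(y_k,w_k,\theta_k) \weakly (y,w,\theta)$ in $\|\cdot\|_{can}$. Using the representation in (i), it suffices to produce, for a near-optimal competitor $(\tilde y,\tilde w,\tilde\theta)$ for the limit point, a \emph{mutual recovery sequence} $(\tilde y_k,\tilde w_k,\tilde\theta_k)$ with $\phi_0(y_k,w_k,\theta_k) - \phi_0(\tilde y_k,\tilde w_k,\tilde\theta_k) \to \phi_0(y,w,\theta) - \phi_0(\tilde y,\tilde w,\tilde\theta)$ and $\mathcal{D}_0((y_k,w_k,\theta_k),(\tilde y_k,\tilde w_k,\tilde\theta_k)) \to \mathcal{D}_0((y,w,\theta),(\tilde y,\tilde w,\tilde\theta))$; this is exactly the content of Lemma~\ref{lem:mutualrecovery}, which I would invoke. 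The construction follows the ansatz sketched in the introduction: define $\tilde w_k := w_k + (\tilde w - w)$, $\tilde\theta_k := \theta_k + (\tilde\theta - \theta)$, and then choose $\tilde y_{k,1}$ so that the membrane strain $\partial_1 \tilde y_{k,1} + |\tilde w_k'|^2/2$ equals $\partial_1 y_{k,1} + |w_k'|^2/2 + (\text{the corresponding increment at the limit})$ — i.e., $\partial_1 \tilde y_{k,1} = \partial_1 y_{k,1} + \partial_1\tilde y_1 - \partial_1 y_1 + \frac12(|w'|^2 - |\tilde w'|^2) - \frac12(|w_k'|^2 - |\tilde w_k'|^2)$, completing $\tilde y_{k,2}$ via \eqref{def:BN} — so that the membrane strain differences and bending differences appearing in both $\phi_0$ and $\mathcal{D}_0$ converge by strong $W^{1,\infty}$-convergence of $w_k - w \to 0$ (from $W^{2,2}$ weak convergence and compact embedding) together with the fixed increments. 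The main obstacle, and the place where the hypotheses \eqref{quadraticforms}/\eqref{eq:quadraticform:minimumnot0} could in principle enter, is ensuring the quadratic-form cross terms behave correctly; however, in the \emph{one-dimensional} lemma these compatibility conditions are not needed (they are only invoked for the two-dimensional Lemma~\ref{lem:mutual}), because here the minimization defining $Q^0_W, Q^1_W$ has already been carried out, so the argument is genuinely simpler — the only real work is the careful bookkeeping of the cubic/quartic membrane remainders, controlled again by the compact embedding $W^{2,2}(I)\subset\subset W^{1,\infty}(I)$ and the uniform energy bound. Once the mutual recovery sequence is in hand, passing to the liminf in the representation formula of (i) gives $\liminf_k |\partial\phi_0|_{\mathcal{D}_0}(y_k,w_k,\theta_k) \ge |\partial\phi_0|_{\mathcal{D}_0}(y,w,\theta)$, completing the proof.
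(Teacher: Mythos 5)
Your overall route coincides with the paper's: item (i) via interpolation (``generalized geodesic'') estimates along the linear segment in $(y,w,\theta)$, with the cubic/quartic membrane remainders controlled through the energy bound $M$ (this is exactly Lemma~\ref{PhiMPhi} in the appendix, mirroring the 2D Lemmas in \cite{MFMKDimension}), item (ii) via the representation of (i), and item (iii) via a mutual recovery sequence inserted into that representation (the paper's Lemma~\ref{lem:mutualrecovery}). Two points, however, do not go through as written. First, in Step 2 the stated mechanism --- ``the representation shows the slope is lower semicontinuous, and combined with the metric structure this gives the strong upper gradient property'' --- is not the actual argument: lower semicontinuity of the slope does not by itself yield the strong upper gradient property. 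What the paper (and the 2D proof you cite) does is to use the representation of (i) to bound the \emph{global} slope $I_{\phi_0}$ by $C_1\vert\partial\phi_0\vert_{\mathcal{D}_0}+C_2$ on the compact image of a given absolutely continuous curve, and then to invoke that the global slope of the $\mathcal{D}_0$-lower semicontinuous functional $\phi_0$ is a strong upper gradient by \cite[Theorem 1.2.5]{AGS}. Since you defer to the 2D proof this is an imprecision rather than a fatal error, but the stated reasoning should be replaced.

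Second, and more substantively, your recovery sequence in Step 3 is defective as written: you prescribe $\partial_1\tilde y_{k,1}$ to contain the correction $\tfrac12(|w'|^2-|\tilde w'|^2)-\tfrac12(|w_k'|^2-|\tilde w_k'|^2)$, which depends only on $x_1$, so $\tilde y_{k,1}$ is obtained by adding to $\xi_{1,k}$ an antiderivative of this correction; since the integral of the correction over $I$ does not vanish in general, the resulting $\tilde\xi_{1,k}$ fails the trace condition $\tilde\xi_{1,k}=\hat u_1$ on $\partial I$, i.e.\ $\tilde y_k\notin\mathscr{S}^{1D}$, and it then cannot be used as a competitor in the representation formula. (There is also a sign slip: with your formula the membrane-strain increment is not exactly the limiting increment, although it still converges to it.) The fix is the paper's much simpler choice $(\tilde y_k,\tilde w_k,\tilde\theta_k):=(y_k+\tilde y-y,\,w_k+\tilde w-w,\,\theta_k+\tilde\theta-\theta)$: the added differences carry zero boundary data, so admissibility is automatic; the bending/twist differences are constant in $k$, the membrane differences converge strongly in $L^2$ by $W^{1,2}(I)\subset\subset L^4(I)$, and the energy difference passes to the limit via the expansion \eqref{expansion} (strongly converging difference paired with a weakly converging sequence). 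With this replacement your Step 3 coincides with Lemma~\ref{lem:mutualrecovery}, and, as you correctly note, no compatibility condition \eqref{quadraticforms}/\eqref{eq:quadraticform:minimumnot0} is needed in the one-dimensional setting.
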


\section{Relation between 2D and 1D setting}\label{Sec:relation2D1D}

In this section, we briefly recall the convergence results in the static case \cite{Freddi2018} and then \EEE we prove lower semicontinuity for the local slopes along the passage from the 2D to the 1D setting. \EEE Recall \EEE the projection mapping $\pi_\eps$, the space $\mathscr{S}= \pi_\eps(\mathscr{S}^{2D}_\eps)$, and the convergences $\pi\sigma$ and $\pi\rho$ introduced before Theorem \ref{thm:relation2d1d}. \EEE In the sequel, it is convenient to express $\phi_\eps$ and $\mathcal{D}_\eps$ in terms of the scaled functions $y$ and $w$ introduced in \eqref{scaledfunctions}. By a change of variables we have (recall $f^{2D}=g^{2D}=0$) \EEE 
\begin{align}\label{def:vK2D-scaled}
\phi_\eps(y,w)=& \frac{1}{2} \int_S Q_W^2\big( E^\eps y + \frac{1}{2} \nabla_\eps w \otimes \nabla_\eps w \big)  + \frac{1}{24} \int_S Q_W^2(\nabla_\eps^2 w) 
\end{align}
and
\begin{align}\label{eq: metriceps-scaled}
	\mathcal{D}_\eps^2((y,w),(\tilde y, \tilde w))= & \int_S Q_R^2\big( E^\eps y -  E^\eps \tilde y + \frac{1}{2} \nabla_\eps w \otimes \nabla_\eps w - \frac{1}{2} \nabla_\eps \tilde w \otimes \nabla_\eps \tilde w \big)  + \frac{1}{12} \int_S Q_R^2(\nabla_\eps^2 w - \nabla_\eps^2 \tilde w) 
\end{align}
for all \EEE $(y,w), (\tilde y, \tilde w)\ZZZ \in \mathscr{S} \EEE$. \EEE

\subsection{$\Gamma$-convergence}\label{sec: gammastatic}
We briefly recall the $\Gamma$-convergence result in \cite{Freddi2018} which \BBB particularly \EEE yields the lower semicontinuity of the energies and the dissipation.

\begin{theorem}[$\Gamma$-convergence of energies]\label{th: Gamma}
$\phi_\eps$ converges to $\phi_0$ in the sense of \ZZZ$\,\Gamma$\EEE-convergence. More precisely,

\noindent {\rm (i) (Lower bound)} For all $(y,w,\theta) \in {\mathscr{S}^{1D}}$ and all sequences $(u_\eps, v_\eps)_\eps$\BBB , $(u_\eps,v_\eps) \in \mathscr{S}^{2D}_\eps$, \EEE such that  $(u_\eps, v_\eps) \stackrel{\pi\sigma}{\to} (y,w,\theta)$ we find
$$\liminf_{\eps \to 0}\phi_\eps(u_\eps, v_\eps) \ge {\phi}_0(y,w,\theta). $$

\noindent {\rm (ii) (Optimality of lower bound)} For all $(y,w,\theta) \in {\mathscr{S}^{1D}}$ there exists a sequence $(u_\eps, v_\eps)_\eps$ \ZZZ with $(u_\eps, v_\eps) \in \BBB\mathscr{S}^{2D}_\eps\EEE$  for all $\eps > 0$ \EEE  such that  $(u_\eps, v_\eps) \stackrel{\pi\rho}{\to} (y,w,\theta)$ and  
$$\lim_{\eps \to 0}\phi_\eps(u_\eps, v_\eps) = {\phi}_0(y,w,\theta).$$
\end{theorem}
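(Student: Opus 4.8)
The plan is to reduce the statement to the corresponding $\Gamma$-convergence result in \cite{Freddi2018}, tracking carefully the (minor) differences in the functional setting, namely the presence of nonzero boundary data and the force terms. Since we have assumed $f^{2D} = g^{2D} = 0$ throughout this section, the force terms play no role in the present statement, and it suffices to compare the two purely elastic energies using the scaled representation \eqref{def:vK2D-scaled}.

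For the lower bound (i), I would argue as follows. Let $(y,w,\theta) \in \mathscr{S}^{1D}$ and let $(u_\eps,v_\eps) \stackrel{\pi\sigma}{\to} (y,w,\theta)$; passing to the scaled functions $(y_\eps,w_\eps)$ via \eqref{scaledfunctions}, this means precisely the convergences of Proposition \ref{thm:compactness} hold. We may assume $\liminf_{\eps\to 0}\phi_\eps(u_\eps,v_\eps) < \infty$, and after passing to a subsequence realizing the $\liminf$, the sequence lies in some sublevel set $\mathscr{S}^{2D}_{\eps,M}$. By \eqref{def:vK2D-scaled} and the definition of the scaled operators, $\phi_\eps(y_\eps,w_\eps)$ is the sum of a membrane term $\tfrac{1}{2}\int_S Q_W^2(E^\eps y_\eps + \tfrac{1}{2}\nabla_\eps w_\eps\otimes\nabla_\eps w_\eps)$ and a bending term $\tfrac{1}{24}\int_S Q_W^2(\nabla^2_\eps w_\eps)$. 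For the bending term I would use the weak convergence $\nabla^2_\eps w_\eps \rightharpoonup \left(\begin{smallmatrix} w'' & \theta' \\ \theta' & \gamma\end{smallmatrix}\right)$ in $L^2(S;\R^{2\times 2}_{\rm sym})$, weak lower semicontinuity of the convex functional $A \mapsto \int_S Q_W^2(A)$, and then minimize over the free component $\gamma$, producing exactly $\tfrac{1}{24}\int_I Q_W^1(w'',\theta')$ (recalling the definition \eqref{quadraticform1d} of $Q^1_W$). For the membrane term, the nonlinearity $\tfrac12\nabla_\eps w_\eps\otimes\nabla_\eps w_\eps$ requires the strong convergence $\nabla_\eps w_\eps \to (w',\theta)$ in $L^2$ (which follows from $w_\eps\rightharpoonup w$ in $W^{2,2}$ and the compact embedding $W^{2,2}(S)\hookrightarrow W^{1,2}(S)$, actually $W^{1,2}\hookrightarrow L^4$ on the bounded domain $S$ gives $\nabla_\eps w_\eps \to (w',\theta)$ strongly in $L^4$), so that $\tfrac12\nabla_\eps w_\eps\otimes\nabla_\eps w_\eps \to \tfrac12(w',\theta)\otimes(w',\theta)$ strongly in $L^2$; combined with $E^\eps y_\eps \rightharpoonup E$ weakly in $L^2$ with $E_{11}=\partial_1 y_1$, weak lower semicontinuity and minimization over the remaining free components of $E$ yield the lower bound $\tfrac12\int_S Q_W^0(\partial_1 y_1 + \tfrac{|w'|^2}{2})$ via \eqref{quadraticform0d}. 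Adding the two contributions gives $\phi_0(y,w,\theta)$. All of this is exactly \cite[Lemma 2.1 / proof of the $\Gamma$-liminf]{Freddi2018}, the only change being that the compactness of Proposition \ref{thm:compactness} is used in place of its vanishing-mean counterpart, which as noted after that proposition only affects Poincaré/Korn-type inequalities and the identification of boundary values.

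For the optimality of the lower bound (ii), I would take the recovery sequence from \cite{Freddi2018}: given $(y,w,\theta)\in\mathscr{S}^{1D}$, with $y$ identified via \eqref{def:BN} with $(\xi_1,\xi_2)$, one constructs $w_\eps \to w$ in $W^{2,2}(S)$ together with an explicit correction in the $x_2$ variable chosen so that $\nabla^2_\eps w_\eps$ converges to the optimal symmetric matrix field realizing $Q_W^1(w'',\theta')$ (this fixes the off-diagonal and $\gamma$ components), and $y_\eps$ with a similar $O(\eps)$ correction realizing the optimal $E$ for $Q_W^0$; one then checks that these corrections are compatible with the boundary conditions in $\mathscr{S}^{2D}_\eps$ (here the specific form of the boundary data in \eqref{admissiblefunctions2Dboundary} is what makes the construction admissible, as emphasized after Proposition \ref{thm:compactness}), that the convergence holds in the strong sense of $\pi\rho$, and that $\phi_\eps(u_\eps,v_\eps)\to\phi_0(y,w,\theta)$ by direct computation using the strong convergences and continuity of the quadratic forms. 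The main obstacle, and the only point genuinely requiring care beyond citing \cite{Freddi2018}, is verifying that the recovery sequence and the various corrective terms respect the modified (nonzero, $x_2$-affine) lateral boundary conditions of \eqref{admissiblefunctions2Dboundary}; this is handled exactly as in the compactness proof, by observing that the boundary data of $(u_\eps,v_\eps)$ are prescribed in precisely the form $y_1 = \hat u_1 - x_2\hat u_2'$, $y_2 = \hat u_2$, $w = \hat v$, $\partial_1 w = \partial_1\hat v$ on $\Gamma$, which is consistent with the target in $BN_{(\hat u_1,\hat u_2)}(S;\R^2)\times W^{2,2}_{\hat v}(I)\times W^{1,2}_0(I)$, so the standard cut-off/correction near $\Gamma$ introduces only an energy error vanishing as $\eps\to 0$.
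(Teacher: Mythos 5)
Your proposal is correct and follows essentially the same route as the paper: the lower bound is the argument of \cite[Theorem 2.3(i)]{Freddi2018} (weak lower semicontinuity plus pointwise minimization over the free components, using the compactness of Proposition \ref{thm:compactness} in place of the vanishing-mean version), and the recovery sequence is the Freddi--Hornung--Mora--Paroni ansatz adjusted for the lateral boundary conditions, which is exactly what the paper defers to Lemma \ref{lem:mutual} and Remark \ref{rem: recov}(a). No gaps.
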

In \cite[Theorem 2.3(i),(ii)]{Freddi2018}, the proof has been given for functions   with vanishing mean. The adaptions to the present setting, however, are minor. In particular,  the construction of recovery sequences needs to be slightly adjusted to comply with the imposed boundary conditions. We defer details to  Lemma \ref{lem:mutual} and Remark \ref{rem: recov}\ZZZ\textbf{(a)}\,\EEE below where we use a similar ansatz \EEE as in \EEE \cite{Freddi2018}. \EEE

% we refer to .   minor adaptions, one can fit the proof to our setting. Especially, the recovery sequence for the in-plane displacement $(u_\eps)_\eps$ is slightly different as we add lower-order-terms to $(u_\eps)_1$ instead of $(u_\eps)_2$. Let us just mention that we use the same ansatz of the recovery sequence in

\begin{theorem}[Lower semicontinuity of dissipation distances]\label{th: lscD}
Let $M>0$. Then, for sequences $(u_\eps, v_\eps)_\eps$ and  $(\tilde u_\eps, \tilde v_\eps)_\eps$, $(u_\eps, v_\eps),(\tilde u_\eps, \tilde v_\eps) \in \mathscr{S}^{2D}_{\eps,M}$, with  $(u_\eps, v_\eps) \stackrel{\pi\sigma}{\to} (y,w,\theta)$ and $(\tilde u_\eps, \tilde v_\eps) \stackrel{\pi\sigma}{\to} (\tilde y, \tilde w, \tilde \theta)$  we have
$$\liminf_{\eps \to 0} \mathcal{D}_\eps((u_\eps, v_\eps),(\tilde u_\eps, \tilde v_\eps) ) \ge {\mathcal{D}_0} \big ((y,w,\theta),(\tilde y , \tilde w, \tilde \theta) \big).$$
\end{theorem}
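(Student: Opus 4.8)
\textbf{Proof strategy for Theorem \ref{th: lscD}.} The plan is to reduce the lower semicontinuity of the dissipation distances to a lower semicontinuity statement for quadratic functionals under the $\sigma$-convergence of the scaled strains, exactly parallel to the $\Gamma$-liminf inequality in Theorem \ref{th: Gamma}(i). Writing $\mathcal{D}_\eps$ in the scaled form \eqref{eq: metriceps-scaled}, the square of the distance is a sum of two nonnegative quadratic terms in the differences $E^\eps y_\eps - E^\eps \tilde y_\eps + \tfrac12 \nabla_\eps w_\eps \otimes \nabla_\eps w_\eps - \tfrac12 \nabla_\eps \tilde w_\eps \otimes \nabla_\eps \tilde w_\eps$ and $\nabla_\eps^2 w_\eps - \nabla_\eps^2 \tilde w_\eps$. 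By Proposition \ref{thm:compactness} applied to the two sequences (whose energies are bounded by $M$), and after passing to a further subsequence so that all the weak limits in that proposition exist simultaneously for both sequences, we have $E^\eps y_\eps \weakly E$, $E^\eps \tilde y_\eps \weakly \tilde E$ in $L^2(S;\R^{2\times2}_{\rm sym})$ with $E_{11} = \partial_1 y_1$, $\tilde E_{11} = \partial_1 \tilde y_1$, and $\nabla_\eps w_\eps \weakly (w',\theta)$ in $W^{1,2}(S;\R^2)$, hence $\nabla_\eps w_\eps \to (w',\theta)$ strongly in $L^4(S;\R^2)$ by compact embedding in the thin direction (and similarly for the tilde sequence), while $\nabla_\eps^2 w_\eps \weakly \left(\begin{smallmatrix} w'' & \theta' \\ \theta' & \gamma \end{smallmatrix}\right)$, $\nabla_\eps^2 \tilde w_\eps \weakly \left(\begin{smallmatrix} \tilde w'' & \tilde\theta' \\ \tilde\theta' & \tilde\gamma \end{smallmatrix}\right)$ in $L^2$.

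The key step is then to identify the weak $L^2$-limit of the nonlinear product $\tfrac12 \nabla_\eps w_\eps \otimes \nabla_\eps w_\eps$. Since $\nabla_\eps w_\eps \to (w',\theta)$ strongly in $L^4$, the product converges strongly in $L^2$ to $\tfrac12 (w',\theta)\otimes(w',\theta)$; the analogous statement holds for the tilde sequence. Consequently the full argument of the quadratic form $Q_R^2$ in the membrane part of \eqref{eq: metriceps-scaled} converges weakly in $L^2(S;\R^{2\times2}_{\rm sym})$ to $E - \tilde E + \tfrac12 (w',\theta)\otimes(w',\theta) - \tfrac12 (\tilde w',\tilde\theta)\otimes(\tilde w',\tilde\theta)$, and the argument of $Q_R^2$ in the bending part converges weakly to the difference of the two limiting Hessian-type matrices. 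Now one uses that $A \mapsto \int_S Q_R^2(A)$ is convex and continuous on $L^2$ (because $Q_R^2$ is a nonnegative, hence convex, quadratic form), and therefore weakly lower semicontinuous; this gives
\begin{align*}
\liminf_{\eps \to 0} \mathcal{D}_\eps^2((u_\eps,v_\eps),(\tilde u_\eps,\tilde v_\eps)) \ge{}& \int_S Q_R^2\Big( E - \tilde E + \tfrac12 (w',\theta)\otimes(w',\theta) - \tfrac12 (\tilde w',\tilde\theta)\otimes(\tilde w',\tilde\theta)\Big) \\
& + \frac{1}{12}\int_S Q_R^2\Big(\begin{pmatrix} w''-\tilde w'' & \theta'-\tilde\theta' \\ \theta'-\tilde\theta' & \gamma-\tilde\gamma \end{pmatrix}\Big).
\end{align*}
Finally one minimizes the right-hand side over the unknown "free" components, namely over the off-diagonal/lower-right entries of $E-\tilde E$ in the first integral (noting that only the $(1,1)$-entry $\partial_1 y_1 - \partial_1 \tilde y_1$ is pinned down, and one is still free to replace $E-\tilde E$ by the pointwise minimizer) and over $\gamma - \tilde\gamma$ in the second; by the definitions \eqref{quadraticform1d} and \eqref{quadraticform0d} of $Q_R^1$ and $Q_R^0$, this minimization produces exactly $\int_S Q_R^0(\partial_1 y_1 - \partial_1\tilde y_1 + \tfrac12|w'|^2 - \tfrac12|\tilde w'|^2)$ and $\tfrac1{12}\int_I Q_R^1(w''-\tilde w'', \theta'-\tilde\theta')$, i.e. $\mathcal{D}_0((y,w,\theta),(\tilde y,\tilde w,\tilde\theta))^2$ as in \eqref{def:metric}. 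Since the liminf is taken over the subsequence chosen above but the inequality only improves along further subsequences, a standard subsequence argument upgrades it to the full sequence, and taking square roots concludes.

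I expect the main obstacle to be the bookkeeping in the last minimization step, in particular making sure that dropping the constraint on $E - \tilde E$ (keeping only the $(1,1)$-component and replacing the rest by the pointwise minimizer) is legitimate: one needs that $E - \tilde E$, being an arbitrary admissible weak limit, can indeed be assumed to attain the minimum over $\R^{2\times2}_{\rm sym}$ in the remaining entries without violating any constraint — this is the reason the minimized quadratic forms $Q_R^1, Q_R^0$ appear and is precisely the mechanism used in \cite{Freddi2018}. A minor subtlety is that the strong $L^4$-convergence of $\nabla_\eps w_\eps$ requires the Rellich-type embedding on the thin domain $S$ with the $\eps$-scaled gradient, which is already implicit in the compactness of Proposition \ref{thm:compactness} and in \cite[Lemma 2.1]{Freddi2018}; one should invoke it rather than re-prove it. Apart from these points, the argument is a routine adaptation of the $\Gamma$-liminf inequality, so I would keep the exposition short and refer to \cite{Freddi2018} and Theorem \ref{th: Gamma} for the shared parts.
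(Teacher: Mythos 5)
Your argument is correct and is essentially the paper's proof: the paper itself disposes of Theorem \ref{th: lscD} by noting that $\mathcal{D}_\eps$ has the same structure as $\phi_\eps$ and invoking the $\Gamma$-liminf argument of Theorem \ref{th: Gamma}(i) (i.e.\ \cite[Theorem 2.3(i)]{Freddi2018}), which is exactly what you spell out (strong $L^4$ convergence of $\nabla_\eps w_\eps$, weak $L^2$ convergence of the arguments, convexity of $Q_R^2$, then reduction to $Q_R^1,Q_R^0$). The only cosmetic point is that your final ``minimization over free components'' needs no legitimacy discussion: it is just the pointwise inequality $Q_R^2(q_{11},q_{12},q_{22})\ge Q_R^1(q_{11},q_{12})\ge Q_R^0(q_{11})$ applied to the limit matrices, so no constraint on $E-\tilde E$ or $\gamma-\tilde\gamma$ is ever dropped.
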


\begin{proof}
The argument is analogous to the one in Theorem \ref{th: Gamma}(i) (cf.~\cite[Theorem 2.3(i)]{Freddi2018}) \BBB as the structure of $\mathcal{D}_\eps$ and $\mathcal{D}$ is similar to $\phi_\eps$ and $\phi$, see \eqref{def:metric},  \eqref{def:enegeryphi},  \eqref{def:vK2D-scaled},  and \eqref{eq: metriceps-scaled}.   \EEE
\end{proof}

\subsection{Lower semicontinuity of slopes}\label{sec: gamma}
In contrast to \EEE Theorems \EEE \ref{th: Gamma} and \ref{th: lscD}, the derivation of lower semicontinuity for the local slopes is more challenging as \BBB in its definitions \EEE metrics appear in the denominator and energies are subtracted  in the enumerator. Roughly speaking, \EEE we need a reverse inequality in Theorem \ref{th: Gamma}(i) and \EEE Theorem \ref{th: lscD} which in general is false. The representations of the slopes in Lemmas \ref{th: metric space-lin} and \ref{Thm:representationlocalslope1} allow  us to construct recovery sequences \EEE such that the ``reverse inequality'' is true. More precisely, we have the following.

\begin{lemma}[Mutual recovery sequence]\label{lem:mutual}
	Suppose that \eqref{quadraticforms} or \eqref{eq:quadraticform:minimumnot0} holds. \ZZZ
	Consider a sequence $\ZZZ ( u_\eps, v_\eps)_\eps, ( u_\eps, v_\eps)  \in \mathscr{S}^{2D}_{\eps,M}$, such that $(u_\eps, v_\eps) \stackrel{\pi\sigma}{\to}  (y,w,\theta)$. 	Let $(\tilde y, \tilde w, \tilde \theta) \in \mathscr{S}^{1D}$. Then, there exists a mutual recovery sequence $(\tilde u_\eps, \tilde v_\eps)_\eps$, \MMM $(\tilde u_\eps, \tilde v_\eps) \in \mathscr{S}^{2D}_{\eps}$ for all $\eps >0$, \EEE such that
	\begin{align}
	\liminf\limits_{\eps \to 0} \BBB \big( \EEE \phi_\eps(u_\eps,v_\eps) - \phi_\eps(\tilde u_\eps, \tilde v_\eps) \big)\geq \phi_0(y,w,\theta) - \phi_0 (\tilde y, \tilde w, \tilde \theta) \label{mutualenergy}
	\end{align}
	and
	\begin{align}
	\lim\limits_{\eps \to 0} \mathcal{D}_\eps((u_\eps,v_\eps),(\tilde u_\eps, \tilde v_\eps)) \BBB = \EEE \mathcal{D}_0((y,w,\theta),(\tilde y, \tilde w, \tilde \theta)).\label{mutualmetric}
	\end{align}
\end{lemma}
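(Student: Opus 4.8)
The plan is to build the mutual recovery sequence $(\tilde u_\eps, \tilde v_\eps)$ by combining the target limit data $(\tilde y, \tilde w, \tilde\theta)$ with the ``defect'' carried by the original sequence $(u_\eps, v_\eps)$, so that the nonlinear terms $\nabla_\eps w_\eps \otimes \nabla_\eps w_\eps$ in $\phi_\eps$ and in $\mathcal{D}_\eps$ are handled by a careful cancellation. First I would recall from Proposition~\ref{thm:compactness} the limits of the scaled quantities: $E^\eps y_\eps \rightharpoonup E$, $\nabla_\eps w_\eps \rightharpoonup (w',\theta)$, and $\nabla^2_\eps w_\eps \rightharpoonup \left(\begin{smallmatrix} w'' & \theta' \\ \theta' & \gamma\end{smallmatrix}\right)$ for suitable $E$ and $\gamma$. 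The ansatz for $\tilde v_\eps$ follows the recovery-sequence construction of \cite{Freddi2018}: essentially $\tilde v_\eps(x) = \tilde w(x_1) + \eps x_2 \tilde\theta(x_1) + \tfrac{\eps^2}{2} x_2^2 \mu_\eps(x_1)$ (with a cutoff near $\partial I$ to match boundary values), where $\mu_\eps$ is chosen to realize the minimizing second-order entry of $Q_W^2$; hence $\nabla_\eps \tilde v_\eps \to (\tilde w', \tilde\theta)$ and $\nabla^2_\eps \tilde v_\eps \to \left(\begin{smallmatrix}\tilde w'' & \tilde\theta' \\ \tilde\theta' & \mu\end{smallmatrix}\right)$ strongly in $L^2(S)$, with $\mu = \mathrm{argmin}_\alpha Q_W^2(\tilde w'', \tilde\theta', \alpha)$. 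The in-plane part $\tilde u_\eps$ is then defined by adding to $u_\eps$ a correction that (a) cancels the difference of the nonlinear terms $\tfrac12\nabla_\eps \tilde v_\eps \otimes \nabla_\eps \tilde v_\eps - \tfrac12\nabla_\eps v_\eps \otimes \nabla_\eps v_\eps$ at the level of $E^\eps$, modulo the $(1,2)$- and $(2,2)$-entries which we are free to adjust, and (b) respects the boundary conditions defining $\mathscr{S}^{2D}_\eps$.

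The key computation is then to track $\phi_\eps(u_\eps, v_\eps) - \phi_\eps(\tilde u_\eps, \tilde v_\eps)$. Writing $A_\eps := E^\eps y_\eps + \tfrac12\nabla_\eps w_\eps \otimes \nabla_\eps w_\eps$ and $\tilde A_\eps := E^\eps \tilde y_\eps + \tfrac12 \nabla_\eps \tilde w_\eps \otimes \nabla_\eps \tilde w_\eps$, one uses the parallelogram-type identity $Q_W^2(A_\eps) - Q_W^2(\tilde A_\eps) = Q_W^2(A_\eps - \tilde A_\eps) + 2 (A_\eps - \tilde A_\eps) : \C_W^2 \tilde A_\eps$ (valid since $Q_W^2$ is a quadratic form with associated tensor $\C_W^2$), and similarly for the bending part. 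By construction $A_\eps - \tilde A_\eps$ converges strongly in the relevant $L^2$ sense to a matrix whose $(1,1)$-entry is $(\partial_1 y_1 - \partial_1\tilde y_1) + \tfrac12(|w'|^2 - |\tilde w'|^2)$; the cross term $2(A_\eps - \tilde A_\eps):\C_W^2 \tilde A_\eps$ passes to the limit by weak-strong convergence ($A_\eps \rightharpoonup$ its weak limit, $\tilde A_\eps \to$ strongly). Here the compatibility hypotheses \eqref{quadraticforms} or \eqref{eq:quadraticform:minimumnot0} enter decisively: under \eqref{quadraticforms} the minimizing $(1,2)$- and $(2,2)$-entries are zero, so the reductions $Q_W^2 \rightsquigarrow Q_W^1 \rightsquigarrow Q_W^0$ are exact and the off-diagonal contributions vanish in the limit; under \eqref{eq:quadraticform:minimumnot0} one instead uses the $\eps$-dependent quadratic forms and the convergence $Q_{R,\eps}^2 \to Q_R^1$ in the $e_2$-direction to absorb the viscous off-diagonal terms. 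Collecting the limits and using $\int_{-1/2}^{1/2} x_2\,dx_2 = 0$, $\int_{-1/2}^{1/2} x_2^2\,dx_2 = 1/12$ to integrate out $x_2$, one arrives at exactly $\phi_0(y,w,\theta) - \phi_0(\tilde y, \tilde w, \tilde\theta)$, with an inequality (rather than equality) because the weak liminf of $\int Q_W^2(A_\eps - \tilde A_\eps)$ may strictly exceed $\int Q_W^0(\cdots)$ — and this is fine since we only need ``$\geq$'' in \eqref{mutualenergy}.

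For the metric identity \eqref{mutualmetric} the same ansatz is used, but now one wants \emph{equality}: here the point is that $\mathcal{D}_\eps((u_\eps,v_\eps),(\tilde u_\eps,\tilde v_\eps))^2$ involves only $Q_R^2$ of \emph{differences}, and by construction the difference $E^\eps y_\eps - E^\eps \tilde y_\eps + \tfrac12\nabla_\eps w_\eps\otimes\nabla_\eps w_\eps - \tfrac12\nabla_\eps \tilde w_\eps\otimes\nabla_\eps\tilde w_\eps$ converges \emph{strongly} in $L^2(S;\R^{2\times2})$ to a matrix whose symmetric reduction realizes the minima defining $Q_R^0$ and $Q_R^1$ — the $(1,2)$- and $(2,2)$-entries of the in-plane correction are precisely chosen to be the minimizers. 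Strong convergence then gives $\mathcal{D}_\eps(\cdots)^2 \to \int_S Q_R^0(\cdots) + \tfrac{1}{12}\int_I Q_R^1(\cdots) = \mathcal{D}_0((y,w,\theta),(\tilde y,\tilde w,\tilde\theta))^2$; one must check that no ``$\geq$'' loss occurs, which is exactly why equality (and not just one inequality) holds. The main obstacle, as the authors flag, is arranging the in-plane correction $\tilde u_\eps - u_\eps$ so that simultaneously: it lies in $\mathscr{S}^{2D}_\eps$ (correct boundary traces), it produces strong $L^2$ convergence of the relevant strain differences (so that both the cross term in the energy and the whole metric pass cleanly to the limit), and the off-diagonal/transverse entries land on the right minimizers for \emph{both} $Q_W$ and $Q_R$ at once — this is the ``compatible for elastic energy and viscous dissipation at the same time'' requirement, and it is precisely what forces the dichotomy \eqref{quadraticforms}/\eqref{eq:quadraticform:minimumnot0}. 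A subtlety to handle carefully is the boundary-layer correction near $\partial I$: the cutoff modifying $\tilde v_\eps$ and $\tilde u_\eps$ to match the prescribed traces must be shown to contribute negligibly in the limit, which follows from the $W^{2,2}$-regularity of $\tilde w$ and the scaling of the cutoff region, as in \cite{Freddi2018}.
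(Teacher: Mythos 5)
There is a genuine gap, and it sits exactly at the point the lemma is designed to handle. You build $\tilde v_\eps$ \emph{from scratch} out of the target $(\tilde y,\tilde w,\tilde\theta)$, as in the static recovery construction of \cite{Freddi2018} ($\tilde v_\eps \approx \tilde w + \eps x_2\tilde\theta + \tfrac{\eps^2}{2}x_2^2\mu_\eps$). With that ansatz, $\nabla_\eps^2 \tilde w_\eps$ converges strongly, but $\nabla_\eps^2 w_\eps$ is only \emph{weakly} convergent (Proposition~\ref{thm:compactness}); hence the bending difference $\nabla_\eps^2 w_\eps - \nabla_\eps^2\tilde w_\eps$ entering $\mathcal{D}_\eps$ in \eqref{eq: metriceps-scaled} converges only weakly, and possible persistent oscillations of the original sequence make $\limsup_{\eps}\mathcal{D}_\eps((u_\eps,v_\eps),(\tilde u_\eps,\tilde v_\eps))$ in general \emph{strictly larger} than $\mathcal{D}_0((y,w,\theta),(\tilde y,\tilde w,\tilde\theta))$; moreover the $(2,2)$-entry of the limit involves $\gamma-\mu$, which you do not control. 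So \eqref{mutualmetric} cannot be concluded from your construction (and the inequality you would get, a liminf bound from below, is the useless direction: in the slope the metric sits in the denominator, so what is really needed is the upper bound). Your third paragraph asserts strong convergence only for the membrane strain difference and silently skips the Hessian difference, which is precisely where the argument breaks. The paper's construction avoids this by building the recovery sequence \emph{on top of} the given sequence, $\tilde w_\eps = w_\eps + (\tilde w - w) + \eps x_2\theta_\eps + \eps^2\!\int\!\!\int\tilde\gamma_\eps$ and $\tilde y_\eps = y_\eps + \bar y_\eps$ (see \eqref{eq: rec1}--\eqref{eq: rec2}), with correctors that also incorporate the weak limits $\gamma$, $E_{22}$ from compactness; then every difference appearing in $\mathcal{D}_\eps$ is (up to smooth approximations) a fixed function and converges strongly, which is what ``mutual'' means here.

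A second, coupled point: once $\tilde v_\eps$ is built on $w_\eps$ to rescue \eqref{mutualmetric}, the energy part can no longer be handled as you suggest, because $\phi_\eps(\tilde u_\eps,\tilde v_\eps)$ then does \emph{not} converge to $\phi_0(\tilde y,\tilde w,\tilde\theta)$ (it sees the cross terms with $\gamma$, $E_{12}$, $E_{22}$). Instead one passes to the limit in the \emph{difference} $\phi_\eps(u_\eps,v_\eps)-\phi_\eps(\tilde u_\eps,\tilde v_\eps)$ via the quadratic expansion $Q(a)-Q(b)=Q(a-b)+2\C[a-b,b]$ with weak--strong convergence, and the inequality in \eqref{mutualenergy} arises from relaxing $Q_W^2$ at the limiting matrices down to $Q_W^1$ and $Q_W^0$ (this is where the choices of $\tilde\gamma$, $z$ and the dichotomy \eqref{quadraticforms}/\eqref{eq:quadraticform:minimumnot0} enter); conversely, if one only wanted \eqref{mutualenergy}, separate limits as in your second paragraph would indeed suffice, but they are incompatible with \eqref{mutualmetric}. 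Your outline correctly identifies the expansion identity, the role of the compatibility hypotheses, and the boundary-condition issue, but the specific ansatz for $\tilde v_\eps$ defeats the metric statement and therefore the lemma as stated.
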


\begin{rem}
{\normalfont
 In the following and in later proofs, \EEE we will frequently use the elementary expansion
\begin{align}
Q(a)-Q(b) = Q(a-b) + 2 \C[a-b,b]\label{expansion}
\end{align}
for all $a,b$, where $Q$ \EEE is \EEE a quadratic form with associated bilinear form $\C$.
}
\end{rem}

%Here, it is fundamental to consider a compatibility condition of the quadratic forms. As mentioned in Theorem \ref{th: Gamma}, the construction of the mutual recovery sequence is more simple as we assume that $\tilde Q_W^2$ and $\tilde Q_R^2$ attain its minima at $0$. The proof also works for materials with vanishing dissipation effect into the $e_2$ direction. The recovery sequence can be constructed as in Theorem \ref{th: Gamma} (ii) and the assumption for $Q_R^2$ allows us to neglect coordinates that are linked to the $e_2$ direction.

\begin{proof}[Proof of Lemma \ref{lem:mutual}] 
The proof is based on the ansatz of the recovery sequence in \cite{Freddi2018}, slightly modified to comply with the imposed boundary conditions. Recall the definition of the quadratic forms in \eqref{quadraticform1d}--\eqref{quadraticform0d}. \EEE We first suppose that \eqref{eq:quadraticform:minimumnot0} holds as this case is more delicate. At the end of the proof, we briefly present the adaptions for \eqref{quadraticforms}.

\noindent \textit{Step 1: Definition of recovery sequences.} \BBB Given $(u_\eps,v_\eps)$, let $(y_\eps,w_\eps)$ as in \eqref{scaledfunctions}. \EEE Let $\gamma$, $E_{12}$, and $E_{22}$ be the functions given by Proposition~\ref{thm:compactness} such that $\tfrac{1}{\eps^2} \partial_{22} w_\eps \rightharpoonup \gamma$, $(E^\eps y_\eps)_{12} \rightharpoonup E_{12}$ and $(E^\eps y_\eps)_{22} \rightharpoonup E_{22}$ in $L^2(S)$. 
We let \EEE $\tilde \gamma\colon I \to \R$ and $z\colon S \to \R$ be functions \EEE such that 
\begin{align}\label{eq: Wredu}
Q_W^2(\tilde w'',\tilde \theta',\tilde \gamma) = Q_W^1(\tilde w'', \tilde \theta') \quad \quad \text{ and } \quad \quad Q_W^0(\partial_1 \tilde y_1+ \tfrac{1}{2} \tilde w''^2) = Q_W^2(\partial_1 \tilde y_1+ \tfrac{1}{2} \tilde w''^2, 0, z).
\end{align}
As $Q_W^2$ is positive definite on $\R^{2\times 2}_{\rm sym}$ and $(\tilde y, \tilde w, \tilde \theta) \in \mathscr{S}^{1D}$, \EEE we find that $\tilde \gamma \in L^2(I)$ and $z \in L^2(S)$. \EEE By standard density arguments for $L^2$- and $W^{1,2}$-spaces, there exist functions $\theta_\eps \in C_c^\infty(I)$ and $\ZZZ \tilde\gamma_\eps, \EEE \zeta_\eps \in C_c^\infty(S)$ such that $\theta_\eps \to \tilde \theta - \theta$ in $W^{1,2}(I)$, $\MMM \eps \theta'_\eps, \eps\theta''_\eps \EEE \to 0$ in $L^2(I)$, $\tilde \gamma_\eps \to \tilde \gamma - \gamma$, \ZZZ $\eps  \partial_1 \tilde\gamma_\eps \to 0$, $\eps^2 \RRR \partial^2_{11} \EEE\tilde \gamma_\eps \to 0$ \EEE in $L^2(S)$, and $\zeta_\eps \to z- E_{22}$, $\eps  \partial_1\zeta_\eps \EEE \to 0$ in $L^2(S)$. We define the recovery sequence for the vertical displacement $\tilde w_\eps$ \EEE as
	\begin{align}\label{eq: rec1}
	\tilde w_\eps (x) = w_\eps (x) + \tilde w(x_1) - w(x_1) + \eps x_2 \theta_\eps (x_1) + \ZZZ \eps^2 \int_{-1/2}^{x_2} \int_{-1/2}^t \tilde \gamma_\eps(x_1,s) \, {\rm d}s \, {\rm d}t\EEE.
	\end{align}
\ZZZ By recalling \eqref{scaledoperators}, we compute \EEE
\begin{align*}
\nabla_\eps w_\eps(x) = \begin{pmatrix}
\partial_1 w_\eps(x) + \tilde w' (x_1) - w'(x_1) + \eps x_2 \theta_\eps' (x_1) + \ZZZ \eps^2 \int_{-1/2}^{x_2} \int_{-1/2}^t \partial_1 \tilde \gamma_\eps(x_1,s) \, {\rm d}s \, {\rm d}t\EEE  \\
	\EEE \tfrac{1}{\eps} \partial_2 w_\eps (x) + \theta_\eps (x_1)   + \ZZZ \eps \int_{-1/2}^{x_2} \tilde \gamma_\eps(x_1,s) \, {\rm d}s \EEE
\end{pmatrix}
\end{align*}
%	\begin{align*}
%	 \partial_1 w_\eps (x) =& \partial_1 w_\eps(x) + \tilde w' (x_1) - w'(x_1) + \eps x_2 \theta_\eps' (x_1) + \ZZZ \eps^2 \int_{-1/2}^{x_2} \int_{-1/2}^t \partial_1 \tilde \gamma_\eps(x_1,s) \, {\rm d}s \, {\rm d}t\EEE,  \\
%	\tfrac{1}{\eps} \partial_2 w_\eps(x) =&
%	\EEE \tfrac{1}{\eps} \partial_2 w_\eps (x) + \theta_\eps (x_1)   + \ZZZ \eps \int_{-1/2}^{x_2} \tilde \gamma_\eps(x_1,s) \, {\rm d}s \EEE
%	\end{align*}
	and further see \ZZZ that the entries of the scaled Hessian $\grad_\eps^2 \tilde w_\eps(x)$ are given by \EEE
	\begin{align*}
	\partial^2_{11} w_\eps(x) &=\textstyle    \partial_{11} w_\eps(x) + \tilde w''(x_1) - w''(x_1)  + \eps x_2 \theta_\eps'' (x_1) + \ZZZ \eps^2 \int_{-1/2}^{x_2} \int_{-1/2}^t \partial^2_{11} \tilde \gamma_\eps(x_1,s) \, {\rm d}s \, {\rm d}t\EEE \\
	\tfrac{1}{\eps}\partial^2_{12} w_\eps(x) &= \textstyle \tfrac{1}{\eps} \partial_{12} w_\eps(x) + \theta_\eps'(x_1) + \ZZZ \eps \int_{-1/2}^{x_2} \partial_1 \tilde \gamma_\eps(x_1,s) \, {\rm d}s \EEE\\
	\tfrac{1}{\eps^2}\partial^2_{22} w_\eps(x) &= \textstyle\tfrac{1}{\eps^2}\partial_{22} w_\eps(x) +\ZZZ \tilde \gamma_\eps(x_1,x_2)\EEE
	\end{align*}
%	By recalling the scaled gradients in \eqref{scaledoperators}, we \EEE compute 
%	\begin{align*}
%	\grad_\eps \tilde w_\eps (x) = \Big(\partial_1 w_\eps(x) + \tilde w' (x_1) - w'(x_1) + \eps x_2 \theta_\eps' (x_1) + \tfrac{\eps^2}{2} x_2^2 \tilde{\gamma}_\eps'(x_1),  \EEE \tfrac{1}{\eps} \partial_2 w_\eps (x) + \theta_\eps (x_1)   +\eps x_2 \tilde \gamma_\eps(x_1) \Big)
%	\end{align*}
%	and further see that $\grad_\eps^2 \tilde w_\eps(x)$ is given by \EEE
%	\begin{align*}
%	 \begin{pmatrix}\partial_{11} w_\eps(x) + \tilde w''(x_1) - w''(x_1)  + \eps x_2 \theta_\eps'' (x_1) + \tfrac{\eps^2}{2} x_2^2 \tilde{\gamma}_\eps''(x_1) \EEE  & \frac{1}{\eps} \partial_{12} w_\eps(x) + \theta_\eps'(x_1)  +\eps x_2 \tilde \gamma_\eps'(x_1)  \\ \frac{1}{\eps} \partial_{12} w_\eps(x) + \theta_\eps'(x_1) + \eps x_2 \tilde \gamma_\eps'(x_1)  & \frac{1}{\eps^2}\partial_{22} w_\eps(x) + \tilde \gamma_\eps(x_1) \\
%	\end{pmatrix}.
%	\end{align*}
 Let us now address the in-plane displacements. \EEE  We \EEE define
	\begin{align}\label{eq: rec2}
	({\bar{y}_\eps})_1(x_1,x_2):= \, &  \tilde y_1(x)- y_1(x) \EEE  + \eps x_2  \big(  w'(x_1)\theta(x_1)-  \tilde w'(x_1) \tilde \theta(x_1)\big),\notag\\
	({\bar{y}_\eps})_2(x_1,x_2):= \, & \textstyle \tilde y_2(x)- y_2(x) \EEE
	- \frac{\eps^2}{2} x_2  \tilde \theta^2(x_1)  + \eps^2  \int_{-1/2}^{x_2}  \EEE \zeta_\eps (x_1,s) \, {\rm d}s.
	\end{align}
	The characterization in (\ref{def:BN})  implies that $\partial_2 y_1 = -\partial_1 y_2$, $\partial_2 \tilde{y}_1 = -\partial_1 \tilde{y}_2$, and $\partial_2 y_2 = \partial_2 \tilde{y}_2=0$. \EEE
	Then, by the definition of $E^\eps$ we have
	\begin{align*}
	(E^\eps \bar y_\eps)_{11} &= \textstyle \partial_1 \tilde y_1 (x) \EEE - \partial_1 y_1   (x) \EEE + \eps x_2\big( w'(x_1)\theta'(x_1) + w''(x_1)\theta(x_1) - \tilde w'(x_1) \tilde \theta'(x_1) - \tilde w''(x_1) \tilde \theta(x_1)\big),\\
	(E^\eps \bar y_\eps)_{12} &=  \textstyle \frac{1}{2}\big( w'(x_1)\theta(x_1)-\tilde w'(x_1) \tilde \theta(x_1) \big) - \frac{\eps x_2 }{2} \tilde \theta(x_1) \tilde \theta'(x_1)  + \frac{\eps }{2}  \int_{ -1/2}^{x_2} \EEE \partial_1 \zeta_\eps (x_1,s) \, {\rm d}s,\\
	(E^\eps \bar y_\eps)_{22} &= \textstyle - \frac{1}{2} \tilde \theta^2(x_1)  + \zeta_\eps(x_1,x_2).
	\end{align*}
The \EEE construction ensures that $\bar y_\eps \in W^{1,2}(S;\R^2)$.  We let \EEE $\tilde y_\eps := y_\eps + \bar y_\eps$. 	Eventually, we define \EEE $ \tilde u_\eps$ and $\tilde v_\eps$ such that  $(\tilde y_\eps)_1(x_1,x_2)= (\tilde u_\eps)_1(x_1, \eps x_2)$, $(\tilde y_\eps)_2(x_1,x_2)= \eps (\tilde u_\eps)_2(x_1,\eps x_2)$, and $\tilde w_\eps(x_1,x_2)= \tilde v_\eps(x_1, \eps x_2)$, \ZZZ see~\eqref{scaledfunctions}. \EEE  As $\theta$, $\tilde \theta$, $\theta_\eps$,  $\tilde{y} - y$, and $\tilde{w} - w$ \EEE   vanish on $\partial I$,  and \ZZZ  $ \gamma_\eps$, $\zeta_\eps$ vanish on $\partial S$, \BBB an \EEE inspection of \eqref{eq: rec1}--\eqref{eq: rec2} shows that \EEE  $ (\tilde{u}_\eps,\tilde{v}_\eps) \EEE \in \mathscr{S}_\eps^{2D}$.\\
		\noindent \textit{Step 2: Proof of \eqref{mutualmetric}.} \EEE
Due to Proposition \ref{thm:compactness} and the compact embedding $W^{1,2}(S) \subset \subset L^4(S)$, we find that $\grad_\eps w_\eps  \to ( w', \theta)$ in $L^4(S;\R^2)$ and  hence 
\begin{align}\label{eq: stron-grad}
\grad_\eps w_\eps \otimes \grad_\eps w_\eps \to (w', \theta) \otimes ( w',  \theta) \quad \text{in} \quad L^2(S;\R^{2\times 2}).
\end{align}
Similarly, due to the fact that $\partial_1 w_\eps \to w'$ in \EEE $L^4(S)$, \EEE  $\tfrac{1}{\eps} \partial_2 w_\eps \to  \theta$ in $L^4 (S)$, and $\theta_\eps \to \tilde \theta - \theta$ in $W^{1,2}(I)$, \EEE we have that $\grad_\eps \tilde w_\eps  \to ( \tilde w', \tilde \theta)$ in $L^4(S;\R^2)$, and thus 
\begin{align*}
\grad_\eps \tilde w_\eps \otimes \grad_\eps \tilde w_\eps \to (\tilde w', \tilde \theta) \otimes (\tilde w', \tilde \theta) \quad \text{in} \quad L^2(S;\R^{2\times 2}).
\end{align*}
This along with $\zeta_\eps \to z- E_{22}$ in $L^2(S)$ and $\tilde y_\eps - y_\eps = \bar{y}_\eps$ \EEE implies by an elementary computation\EEE
	\begin{align}
	&\quad \, \, E^\eps (\tilde y_\eps - y_\eps) + \frac{1}{2} \grad_\eps \tilde w_\eps \otimes \grad_\eps \tilde w_\eps - \frac{1}{2} \grad_\eps  w_\eps \otimes \grad_\eps  w_\eps \nonumber \\ \longrightarrow & \quad \begin{pmatrix} \partial_1 \tilde y_1  - \partial_1 y_1 +\frac{1}{2} (\tilde w'^2 - w'^2)  &  0\\ 0&  z  - E_{22} - \tfrac{1}{2}  \theta^2\EEE \\
	\end{pmatrix} \label{mutual:firstintegralminnot0}
	\end{align}
strongly in $L^2(S;\R^{2\times2})$.	Moreover,  by $\theta_\eps \to \tilde \theta - \theta$ in $W^{1,2}(I)$ and $\tilde \gamma_\eps \to \tilde \gamma - \gamma$ in $L^2(S)$, \BBB \EEE we have
	\begin{align} 
	\grad_\eps^2 (\tilde w_\eps - w_\eps)  &\to \begin{pmatrix} \tilde w'' - w''  & \tilde \theta' - \theta'\\ \tilde \theta' - \theta' & \tilde \gamma - \gamma\\
	\end{pmatrix}  \label{mutual:secondintegralnotmin0}
	\end{align}
	strongly in $L^2(S;\R^{2\times2})$.  Proposition \ref{thm:compactness}  and \eqref{eq: stron-grad} also yield \EEE
	\begin{align}\label{mutual:thirdintegralnotmin0}
	E^\eps y_\eps+ \frac{1}{2} \grad_\eps  w_\eps \otimes \grad_\eps  w_\eps \rightharpoonup \begin{pmatrix} \partial_1y_1 + \frac{1}{2} w'^2  &  E_{12} + w'\theta \\ E_{12} +  w'\theta & E_{22} + \frac{1}{2} \theta^2\\
	\end{pmatrix} \quad \text{ and } \quad 
	\grad_\eps^2  w_\eps \rightharpoonup \begin{pmatrix}  w''  &  \theta'\\ \theta' & \gamma \\
	\end{pmatrix} 
	\end{align}
	weakly in $L^2(S;\R^{2\times2})$. Then, by  \eqref{mutual:firstintegralminnot0}, \eqref{mutual:secondintegralnotmin0}, and \eqref{eq:quadraticform:minimumnot0}  (including explicitly the $\eps$-dependence of $ Q_{R,\eps}^2$) \EEE we find that
	\begin{align*}
& \int_S  Q_{R,\eps}^2 \EEE \Big( E^\eps (\tilde y_\eps - y_\eps) + \tfrac{1}{2} \grad_\eps \tilde w_\eps \otimes \grad_\eps \tilde w_\eps - \tfrac{1}{2} \grad_\eps w_\eps \otimes \grad_\eps w_\eps \Big)+ \frac{1}{12} \int_S   Q_{R,\eps}^2 \EEE \Big(\grad_\eps^2 (\tilde w_\eps - w_\eps) \Big) \\
	\longrightarrow \quad & \int_S  Q_{R}^0 \Big( \partial_1 \tilde y_1 - \partial_1 y_1 + \frac{1}{2} (\tilde w'^2 - w'^2 ) \Big) + \frac{1}{12} \int_I Q_{R}^1  \big( \EEE  \tilde w'' - w'' , \tilde \theta' - \theta'  \big) .
	\end{align*}
	This shows \eqref{mutualmetric}.\\	
\noindent \textit{Step 3: Proof of  \eqref{mutualenergy}}.  We \EEE	additionally use \eqref{expansion} and the fact that a product of sequences converges weakly in $L^1$ if one factor converges weakly and the other one strongly: by \eqref{mutual:secondintegralnotmin0}, \eqref{mutual:thirdintegralnotmin0} \EEE we \EEE obtain
	\begin{align}\label{eq: nochmal1}
	& \int_S Q_{W}^2(\grad_\eps^2 w_\eps)   - \int_S Q_{W}^2(\grad_\eps^2 \tilde w_\eps) \notag\\
	=& \int_S Q_W^2 (\grad_\eps^2 w_\eps - \grad_\eps^2 \tilde w_\eps) + 2 \C_W^2 [\grad_\eps^2 w_\eps - \grad_\eps^2 \tilde w_\eps, \grad_\eps^2 \tilde w_\eps] \notag\\
	\ZZZ \longrightarrow \quad\quad \EEE  & \int_S Q_W^2 \bigg(\hspace{-3pt}\begin{pmatrix}  w'' - \tilde w''  &  \theta' - \tilde\theta'\\ \theta' - \tilde  \theta' & \gamma - \tilde \gamma \\
	\end{pmatrix} \hspace{-3pt} \bigg)  +  2\C_W^2 \bigg[\begin{pmatrix}  w'' - \tilde w''  &  \theta' - \tilde\theta'\\ \theta' - \tilde  \theta' & \gamma - \tilde \gamma \\
	\end{pmatrix}, \begin{pmatrix}  \tilde w''  &   \tilde\theta'\\  \tilde  \theta' & \tilde \gamma \\
	\end{pmatrix} \bigg] \notag \\
	 = & L_1:=   \int_S Q_W^2\bigg(\hspace{-3pt}\begin{pmatrix}  w''   &  \theta' \\ \theta'  & \gamma  \\
	\end{pmatrix} \hspace{-3pt}\bigg)  - \int_S Q_W^2\bigg(\hspace{-3pt}\begin{pmatrix}  \tilde w''   &  \tilde \theta' \\ \tilde \theta'  &  \tilde \gamma \\
	\end{pmatrix} \hspace{-3pt}\bigg) .
	\end{align}
	By \eqref{quadraticform1d} and  \eqref{eq: Wredu}  this implies \EEE
	\begin{align}\label{eq: first-stepi} 
	 L_1 \EEE  \geq  \int_I Q_W^1\big( ( w''   , \theta') \big)  - \int_I Q_W^1\big((  \tilde w''   ,  \tilde \theta' ) \big) . 
	\end{align}
	Similarly, due to \eqref{expansion}, \eqref{mutual:firstintegralminnot0}, and \eqref{mutual:thirdintegralnotmin0}, \EEE we have
	\begin{align}\label{eq: nochmal2}
	& \int_S Q_{W}^2\big( E^\eps y_\eps + \frac{1}{2} \grad_\eps w_\eps \otimes \grad_\eps w_\eps \big)  -  \int_S Q_{W}^2\big( E^\eps \tilde y_\eps +  \grad_\eps \tilde w_\eps \otimes \grad_\eps \tilde w_\eps \big)   \\
	\longrightarrow\quad& L_2:= \EEE \int_S Q_W^2 \bigg(\hspace{-3pt}\begin{pmatrix} \partial_1y_1 + \frac{1}{2} w'^2  &  E_{12} + w'\theta \\ E_{12} +  w'\theta & E_{22} + \frac{1}{2} \theta^2\\
	\end{pmatrix} \hspace{-3pt} \bigg)         - \int_S Q_W^2 \bigg( \hspace{-3pt}\begin{pmatrix} \partial_1 \tilde y_1  +\frac{1}{2} \tilde w'^2   &  E_{12} + w'\theta\\ E_{12} + w'\theta&  z \\
	\end{pmatrix} \hspace{-3pt} \bigg)     . \notag
	\end{align}
	By \eqref{eq:quadraticform:minimumnot0} we have $\argmin_{q_{12}} Q_W^2(q_{11},q_{12}, q_{22}) = 0$ and thus $Q_W^2(q_{11},q_{12}, q_{22}) = Q_W^2(q_{11},0, q_{22}) + \ZZZ bq_{12}^2$ for some \ZZZ $b>0$. \EEE This along with \eqref{quadraticform1d} and  \eqref{eq: Wredu} shows  
	\begin{align}\label{eq: relaxationL2}{L_2 \ge  \int_S \EEE Q_W^0\EEE \big(\partial_1 y_1 + \tfrac{1}{2}\vert w^{\prime}\vert^2\big) -\int_S \EEE Q_W^0 \EEE \big(\partial_1 \tilde{y}_1 + \tfrac{1}{2}\vert \tilde{w}^{\prime}\vert^2\big)} \end{align}
which in combination with \eqref{eq: first-stepi} concludes the proof of  \EEE \eqref{mutualenergy}.\\ \noindent \textit{Step 4: Adaptions for \eqref{quadraticforms}.}   \EEE We now suppose that \eqref{quadraticforms} holds. We redefine $\tilde{\gamma}$ and $z$ differently compared to \eqref{eq: Wredu}: let $\tilde \gamma = \gamma$ and $z =\theta^2/2 + E_{22}$. Then, \EEE \eqref{mutual:firstintegralminnot0}--\eqref{mutual:secondintegralnotmin0}  imply
\begin{align*}
& \int_S Q_{R}^2 \EEE \Big( E^\eps (\tilde y_\eps - y_\eps) + \tfrac{1}{2} \grad_\eps \tilde w_\eps \otimes \grad_\eps \tilde w_\eps - \tfrac{1}{2} \grad_\eps w_\eps \otimes \grad_\eps w_\eps \Big) + \frac{1}{12} \int_S  Q_{R}^2\Big(\grad_\eps^2 (\tilde w_\eps - w_\eps) \Big) \\
\longrightarrow \quad & \int_S  Q_{R}^2\bigg(\hspace{-3pt} \begin{pmatrix} \partial_1 \tilde y_1 - \partial_1 y_1 + \frac{1}{2} (\tilde w'^2 - w'^2 )& 0 \\ 0 &  0\\
\end{pmatrix}\hspace{-3pt}\bigg) + \frac{1}{12} \int_I Q_{R}^2\bigg(\hspace{-3pt}\begin{pmatrix} \tilde w'' - w'' & \tilde \theta' - \theta' \\  \tilde \theta' - \theta' &  0\\
\end{pmatrix} \hspace{-3pt}\bigg).
\end{align*}
%\begin{align*}
%\mathcal{D}_\eps((u_\eps,v_\eps),(\tilde u_\eps, \tilde v_\eps))^2 =& \int_S Q_{D,\eps}^2\Big( E^\eps (\tilde y_\eps - y_\eps) + \tfrac{1}{2} \grad_\eps \tilde w_\eps \otimes \grad_\eps \tilde w_\eps - \tfrac{1}{2} \grad_\eps w_\eps \otimes \grad_\eps w_\eps \Big)\\
% &+ \frac{1}{12} \int_S  Q_{D,\eps}^2\Big(\grad_\eps^2 (\tilde w_\eps - w_\eps) \Big) \\
%\to & \int_S  Q_{D}^2\bigg( \begin{pmatrix} \partial_1 \tilde y_1 - \partial_1 y_1 + \frac{1}{2} (\tilde w'^2 - w'^2 )& 0 \\ 0 &  0\\
%\end{pmatrix}\bigg)\\
% &+ \frac{1}{12} \int_S Q_{D}^2\Big(\begin{pmatrix} \tilde w'' - w'' & \tilde \theta' - \theta' \\  \tilde \theta' - \theta' &  0\\
%\end{pmatrix} \Big) \\
%=& \mathcal{D}_0((y,w,\theta),(\tilde y, \tilde w, \tilde \theta))^2.
%\end{align*}
This along with \EEE \eqref{quadraticforms}  \EEE shows \eqref{mutualmetric}.  By \eqref{eq: nochmal1} and \eqref{eq: nochmal2} we get  $\phi_\eps(u_\eps,v_\eps) - \phi_\eps(\tilde u_\eps, \tilde v_\eps) \to \frac{1}{2}L_2 + \frac{1}{24}L_1$, where $L_1$ and $L_2$ are given for $z = \theta^2/2 + E_{22}$ and $\tilde \gamma = \gamma$, respectively. \EEE The condition in  \eqref{quadraticforms} implies that
$
Q_{W}^2(q_{11},q_{12},q_{22}) = a q_{11}^2 + b q_{12}^2 + c q_{22}^2$ for suitable \ZZZ$a,b, c >0$. \EEE In view of  \eqref{def:enegeryphi}, this yields    $\frac{1}{2}L_2 + \frac{1}{24}L_1  =  \phi_0(y,w,\theta) - \phi_0 (\tilde y, \tilde w, \tilde \theta)$ and concludes the proof of \eqref{mutualenergy}. \EEE
\end{proof}

%
%As $\argmin_{q_{12},q_{22}} Q_W^2(q_{11},q_{12}, q_{22}) = (0,0)$ we get
%%we can find quadratic forms $\bar Q_1$ and $\bar Q_2$ such that $Q_S^0(q_{11}) =  Q_S^2(q_{11},q_{12}, q_{22}) + \bar Q_1(q_{12}) + \bar Q_2 (q_{22})$ and $Q_S^1(q_{11},q_{12}) =  Q_S^2(q_{11},q_{12}, q_{22}) + \bar Q_2 (q_{22})$. Thus, we get
%\begin{align}
%& \nonumber\\ \longrightarrow \quad & \frac{1}{2} \int_S Q_{W}^2\bigg(\hspace{-3pt} \begin{pmatrix} \partial_1 y_1 + \frac{1}{2} w'^2  &  E_{12}+ w' \theta\\ E_{12}+ w' \theta &  E_{22} + \frac{1}{2}\theta^2 \\
%\end{pmatrix}\hspace{-3pt} \bigg) dx + \frac{1}{24} \int_I Q_{W}^2\bigg(\hspace{-3pt}\begin{pmatrix} w''  & \theta'\\  \theta'  & \gamma \\ 
%\end{pmatrix}\hspace{-3pt}\bigg) dx \nonumber\\
%-& \frac{1}{2} \int_S Q_{W}^2\bigg(\hspace{-3pt} \begin{pmatrix} \partial_1 \tilde y_1 + \frac{1}{2}\tilde w'^2  &  E_{12}+ w' \theta\\ E_{12}+ w' \theta &  E_{22} + \frac{1}{2}\theta^2 \\
%\end{pmatrix} \hspace{-3pt}\bigg) dx - \frac{1}{24} \int_I Q_{W}^2\bigg(\hspace{-3pt}\begin{pmatrix} \tilde w''  & \tilde  \theta'\\ \tilde  \theta'  & \gamma \\
%\end{pmatrix}\hspace{-3pt}\bigg) dx  \nonumber \\
%=& , \nonumber
%\end{align}

\begin{rem}[Recovery sequences]\label{rem: recov}
	\normalfont{
		The ansatz for the recovery sequence in \eqref{eq: rec1} and \eqref{eq: rec2} is similar to \cite{Freddi2018} with the difference that \ZZZ \textbf{(a)} \EEE \eqref{eq: rec2} is slightly modified to deal with boundary conditions and \ZZZ \textbf{(b)} \EEE we add suitable corrections concerning the difference of $(y,w,\theta)$ and $(\tilde y, \tilde w, \tilde \theta)$ and the variables $\gamma$ and $E_{22}$ \ZZZ resulting from Proposition~\ref{thm:compactness}. \EEE
		\begin{itemize}
			\item[\ZZZ\textbf{(a)}\EEE] The definition ensures that we can construct recovery sequences for $(\tilde y, \tilde w, \tilde \theta)$ in Theorem~\ref{th: Gamma}(ii): choose $(u_\eps, v_\eps)=(0,0)$ and $(y,w,\theta)=(0,0,0)$ and construct $(\tilde u_\eps, \tilde v_\eps) \in \mathscr{S}^{2D}_{\eps}$ as in the above proof. Then we can again obtain  \eqref{mutualenergy}. This along with $\phi_\eps(u_\eps, v_\eps) =0 = \phi(y,w,\theta)$ yields $\limsup_{\eps \to 0}  \phi_\eps(\tilde u_\eps, \tilde v_\eps) \le \phi_0 (\tilde y, \tilde w, \tilde \theta)$.
			\item[\ZZZ\textbf{(b)}\EEE] The correction ensures strong convergence in \eqref{mutual:firstintegralminnot0}--\eqref{mutual:secondintegralnotmin0} which allows to pass to the limit in the metric term. Clearly, this is not relevant  in the purely static setting \cite{Freddi2018}. \RRR Our construction does not only take the limiting configuration  $(y,w,\theta)$  into account, but also the limits $\gamma$ and $E_{22}$ provided by Proposition~\ref{thm:compactness}. Therefore, as $\gamma$ and $E_{22}$ may be $x_2$-dependent, in contrast to \cite{Freddi2018}, we need to construct suitable approximations in $C^\infty_c(S)$ instead of using  correction terms only defined on the interval $I$. \EEE   In the case \eqref{eq:quadraticform:minimumnot0}, being more general for $Q^2_W$, the quantities $z  - E_{22} - \tfrac{1}{2}  \theta^2$ and   $\tilde \gamma - \gamma$ do not vanish in general and therefore an additional assumption on $Q_{R,\eps}^2$ is required.  \RRR In particular, due to the involved relaxation, see \eqref{eq: first-stepi} and \eqref{eq: relaxationL2}, we only expect an inequality in \eqref{mutualenergy}. \EEE The argument for \EEE \eqref{quadraticforms} is simpler and the \ZZZ mutual \EEE recovery sequence even satisfies an equality in \eqref{mutualenergy} \ZZZ which can be deduced from the structure of the quadratic form. \EEE
		\end{itemize}
	}
\end{rem}

Now we derive the lower semicontinuity of the slopes. The same argument for a single energy/metric was used in the proof of \cite[Theorem 3.2]{MFMKJV}. We include the proof here for the reader's convenience.

\begin{theorem}[Lower semicontinuity of slopes]\label{theorem: lsc-slope}
	\BBB	Suppose that \eqref{quadraticforms} or \eqref{eq:quadraticform:minimumnot0} holds. Then, for \EEE each sequence $(u_\eps, v_\eps)_\eps$ with $(u_\eps, v_\eps) \in  \mathscr{S}^{2D}_{\eps,M}$ such that $(u_\eps, v_\eps) \stackrel{\pi\sigma}{\to}  (y,w,\theta)$ we have 
$$\liminf_{\eps \to 0}|\partial {\phi}_{\eps}|_{{\mathcal{D}}_{\eps}}(u_\eps, v_\eps) \ge |\partial  {\phi}_0|_{ {\mathcal{D}_0}}(y,w,\theta).$$ 
\end{theorem}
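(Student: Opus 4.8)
The plan is to combine the finer representations of the local slopes (Lemma~\ref{lem: rep2d}(ii) in 2D and Lemma~\ref{Thm:representationlocalslope1}(i) in 1D) with the mutual recovery sequence of Lemma~\ref{lem:mutual}. As a preliminary step I would note that, by the lower bound of Theorem~\ref{th: Gamma}(i) applied along $(u_\eps,v_\eps)\stackrel{\pi\sigma}{\to}(y,w,\theta)$ together with $(u_\eps,v_\eps)\in\mathscr S^{2D}_{\eps,M}$, the limit satisfies $\phi_0(y,w,\theta)\le\liminf_{\eps\to0}\phi_\eps(u_\eps,v_\eps)\le M$, so that the one-dimensional representation of $|\partial\phi_0|_{\mathcal D_0}$ at $(y,w,\theta)$ is indeed available.

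Next, fix an arbitrary $(\tilde y,\tilde w,\tilde\theta)\in\mathscr S^{1D}$ with $(\tilde y,\tilde w,\tilde\theta)\neq(y,w,\theta)$ and set $d:=\mathcal D_0((y,w,\theta),(\tilde y,\tilde w,\tilde\theta))$. Since $\mathcal D_0$ is a genuine metric on $\mathscr S^{1D}$ (Lemma~\ref{lem:complete1d}(i)), we have $d>0$, hence $\Phi^1(d)>0$. Invoking Lemma~\ref{lem:mutual} (this is the only place where assumption \eqref{quadraticforms} or \eqref{eq:quadraticform:minimumnot0} enters), I obtain a mutual recovery sequence $(\tilde u_\eps,\tilde v_\eps)\in\mathscr S^{2D}_\eps$ with $\mathcal D_\eps((u_\eps,v_\eps),(\tilde u_\eps,\tilde v_\eps))\to d$ and $\liminf_{\eps\to0}\big(\phi_\eps(u_\eps,v_\eps)-\phi_\eps(\tilde u_\eps,\tilde v_\eps)\big)\ge\phi_0(y,w,\theta)-\phi_0(\tilde y,\tilde w,\tilde\theta)$. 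Because $\mathcal D_\eps$ vanishes on the diagonal and the above metric terms converge to $d>0$, we have $(\tilde u_\eps,\tilde v_\eps)\neq(u_\eps,v_\eps)$ for all sufficiently small $\eps$, so $(\tilde u_\eps,\tilde v_\eps)$ is an admissible competitor in the supremum representation of Lemma~\ref{lem: rep2d}(ii), which gives
\[
|\partial\phi_\eps|_{\mathcal D_\eps}(u_\eps,v_\eps)\ \ge\ \frac{\big(\phi_\eps(u_\eps,v_\eps)-\phi_\eps(\tilde u_\eps,\tilde v_\eps)-\Phi^2_M\big(\mathcal D_\eps((u_\eps,v_\eps),(\tilde u_\eps,\tilde v_\eps))\big)\big)^+}{\Phi^1\big(\mathcal D_\eps((u_\eps,v_\eps),(\tilde u_\eps,\tilde v_\eps))\big)}.
\]

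I would then take the limit inferior as $\eps\to0$. Continuity of $\Phi^1,\Phi^2_M$ together with $\mathcal D_\eps((u_\eps,v_\eps),(\tilde u_\eps,\tilde v_\eps))\to d$ yields convergence of the denominator to $\Phi^1(d)>0$ and of $\Phi^2_M(\mathcal D_\eps(\cdots))$ to $\Phi^2_M(d)$; combining this with the energy estimate from Lemma~\ref{lem:mutual} and with the fact that $t\mapsto t^+$ is continuous and nondecreasing, the numerator has liminf at least $\big(\phi_0(y,w,\theta)-\phi_0(\tilde y,\tilde w,\tilde\theta)-\Phi^2_M(d)\big)^+$. Hence
\[
\liminf_{\eps\to0}|\partial\phi_\eps|_{\mathcal D_\eps}(u_\eps,v_\eps)\ \ge\ \frac{\big(\phi_0(y,w,\theta)-\phi_0(\tilde y,\tilde w,\tilde\theta)-\Phi^2_M(d)\big)^+}{\Phi^1(d)}.
\]
Since $(\tilde y,\tilde w,\tilde\theta)\neq(y,w,\theta)$ was arbitrary, taking the supremum over all such competitors and invoking the representation of Lemma~\ref{Thm:representationlocalslope1}(i) (legitimate because $\phi_0(y,w,\theta)\le M$) gives $\liminf_{\eps\to0}|\partial\phi_\eps|_{\mathcal D_\eps}(u_\eps,v_\eps)\ge|\partial\phi_0|_{\mathcal D_0}(y,w,\theta)$.

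The argument is essentially soft once Lemma~\ref{lem:mutual} is in hand: the genuine difficulty — constructing a sequence that is simultaneously a near-recovery sequence for the energy difference \emph{and} an exact recovery sequence for the dissipation distance under the compatibility conditions \eqref{quadraticforms}/\eqref{eq:quadraticform:minimumnot0} — has already been resolved there. The only points requiring care above are that the mutual recovery sequence is an admissible competitor (guaranteed by $d>0$), the passage to the limit of the positive part (using that $b_\eps\to b$ implies $\liminf(a_\eps-b_\eps)^+\ge(\liminf a_\eps-b)^+$), and the fact that the supremum defining $|\partial\phi_0|_{\mathcal D_0}$ need not be attained, so the estimate must be obtained for each fixed competitor before taking the supremum.
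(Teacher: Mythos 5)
Your proposal is correct and follows essentially the same route as the paper: establish $\phi_0(y,w,\theta)\le M$ via Theorem~\ref{th: Gamma}(i), feed the mutual recovery sequence of Lemma~\ref{lem:mutual} into the slope representation of Lemma~\ref{lem: rep2d}(ii), and conclude with the representation of Lemma~\ref{Thm:representationlocalslope1}(i). The only cosmetic difference is that the paper fixes a $\delta$-optimal competitor for the one-dimensional slope before passing to the limit, whereas you take the supremum over all competitors at the end; the two are logically equivalent.
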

\begin{proof}
First, by Theorem \ref{th: Gamma}(i) we get $\phi_0(y,w,\theta)\le M$. \EEE Let $\delta>0$ and use  Lemma~\ref{Thm:representationlocalslope1}(i) to find \EEE $(\tilde y , \tilde w, \tilde \theta) \in \mathscr{S}^{1D}$ with $(\tilde y, \tilde w, \tilde \theta) \neq (y,w,\theta)$ such that
\begin{align*}
		\vert \partial \phi_0 \vert_{\mathcal{D}_0}(y,w,\theta) -\delta \leq \frac{\big(\phi_0(y,w,\theta) - \phi_0 (\tilde y , \tilde w, \tilde \theta) - \Phi_M^2(\mathcal{D}_0((y,w,\theta),(\tilde y, \tilde w, \tilde \theta))\EEE ) \EEE \big)^+}{\Phi^1(\mathcal{D}_0((y,w,\theta),(\tilde y, \tilde w, \tilde \theta\EEE ) \EEE ))}.
\end{align*}
Let $(\tilde u_\eps, \tilde v_\eps)_\eps$ be the mutual recovery sequence constructed in Lemma \ref{lem:mutual}. Then, Lemma \ref{lem:mutual} yields
\begin{align*}
|\partial {\phi}_0|_{{\mathcal{D}}_0}(y,w,\theta) - \delta \leq \liminf\limits_{\eps \to 0} \frac{\big(\phi_\eps(u_\eps,v_\eps) - \phi_\eps (\tilde u_\eps , \tilde v_\eps) - \Phi_{M}^2(\mathcal{D}_\eps \EEE ((u_\eps,v_\eps),(\tilde u_\eps, \tilde v_\eps )))\big)^+}{\Phi^1( \mathcal{D}_\eps \EEE ((u_\eps,v_\eps),(\tilde u_\eps, \tilde v_\eps)))}.
\end{align*}
In view of Lemma  \ref{lem: rep2d}(ii), \EEE taking the supremum and sending $\delta \to 0$ yields
\begin{align*}
\liminf\limits_{\eps \to 0}|\partial {\phi}_{\eps}|_{{\mathcal{D}}_{\eps}}(u_\eps, v_\eps) \ge |\partial  {\phi}_0|_{ {\mathcal{D}}}(y,w,\theta)
\end{align*}
and concludes the proof.
\end{proof}

\subsection{Convergence of minimizers and strong convergence} 
In the next theorem, we analyze \EEE time-discrete \EEE solutions introduced in \EEE \eqref{eq: ds-new1}. We denote by $\mathbf{\Phi}_\eps$ the two-dimensional and by $\mathbf{\Phi}_0$ the one-dimensional scheme. We show that minimizers of the two-dimensional discretization scheme converge to minimizers of the one-dimensional scheme.
\begin{theorem}[Convergence of minimizer of the schemes]\label{thm:gammaofscheme}
		\BBB	Suppose that \eqref{quadraticforms} or \eqref{eq:quadraticform:minimumnot0} holds. \EEE
	Let $(u_\eps,v_\eps)_\eps$ with $(u_\eps,v_\eps) \in \mathscr{S}^{2D}_{\eps}$ be a sequence such that
	\begin{align}\label{ineq: mintominreverse}
	(u_\eps, v_\eps) \stackrel{\pi\sigma}{\to} (y,w,\theta) \quad \text{ and }
	\quad  \phi_\eps(u_\eps,v_\eps) \to \phi_0 (y,w,\theta)
	\end{align}
	 as  $\eps \to 0$. \EEE 	Moreover, let $\tau>0$ and consider a sequence $ (Y_{\eps,\tau})_\eps$, $Y_{\eps,\tau} \in \mathscr{S}^{2D}_{\eps}$, \EEE such that
	\begin{align*}
	\BBB Y_{\eps,\tau} \EEE = \argmin\nolimits_{z \in \mathscr{S}^{2D}_{ \eps}} \mathbf{\Phi}_\eps(\tau, (u_\eps,v_\eps),z) \ \text{  for all $\eps >0$\EEE} \quad \text{ and } \quad    \BBB Y_{\eps,\tau} \EEE \stackrel{\pi\sigma}{\to} \BBB U_{\tau}\EEE \ \text{ as $\eps \to 0$}.
	\end{align*}
 Then, 
 \begin{align*}
 {\rm (i)} & \quad \BBB U_{\tau}\EEE = \argmin\nolimits_{\BBB v \EEE \in \mathscr{S}^{1D}} \mathbf{\Phi}_0(\tau, (y,w,\theta),\EEE v\EEE ), \\
  {\rm (ii)} &\quad \mathbf{\Phi}_\eps(\tau, (u_\eps,v_\eps),\BBB Y_{\eps,\tau} \EEE) \to \mathbf{\Phi}_0(\tau, (y,w,\theta),\BBB U_{\tau}\EEE),\\
 {\rm (iii)} & \quad \phi_\eps (\BBB Y_{\eps,\tau} \EEE) \to \phi_0(\BBB U_{\tau}\EEE).
 \end{align*}
\end{theorem}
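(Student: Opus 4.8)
The plan is to argue by the standard $\Gamma$-convergence scheme for minimizing movements, using Lemma~\ref{lem:mutual} as the crucial ingredient for the $\limsup$-inequality. First I would establish a liminf-inequality for the incremental functionals: given the assumption \eqref{ineq: mintominreverse} and the convergence $Y_{\eps,\tau} \stackrel{\pi\sigma}{\to} U_\tau$, I want to show
\begin{align}\label{eq:planliminf}
\liminf_{\eps \to 0} \mathbf{\Phi}_\eps(\tau,(u_\eps,v_\eps),Y_{\eps,\tau}) \ge \mathbf{\Phi}_0(\tau,(y,w,\theta),U_\tau).
\end{align}
This follows by combining the lower semicontinuity of the dissipation distances (Theorem~\ref{th: lscD}), applied to the pair $(u_\eps,v_\eps)$ and $Y_{\eps,\tau}$, with the $\Gamma$-liminf inequality for the energies (Theorem~\ref{th: Gamma}(i)), applied to $Y_{\eps,\tau}$; here I should note that $U_\tau \in \mathscr{S}^{1D}$ is guaranteed by the compactness statement Proposition~\ref{thm:compactness} together with the uniform energy bound that minimality forces on $Y_{\eps,\tau}$ (comparing with the competitor $(u_\eps,v_\eps)$ gives $\phi_\eps(Y_{\eps,\tau}) \le \mathbf{\Phi}_\eps(\tau,(u_\eps,v_\eps),(u_\eps,v_\eps)) = \phi_\eps(u_\eps,v_\eps)$, which is bounded by \eqref{ineq: mintominreverse}).

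Second I would establish the matching $\limsup$-inequality: for an arbitrary competitor $(\tilde y,\tilde w,\tilde\theta) \in \mathscr{S}^{1D}$, I use Lemma~\ref{lem:mutual} with the sequence $(u_\eps,v_\eps)$ to obtain a mutual recovery sequence $(\tilde u_\eps,\tilde v_\eps) \in \mathscr{S}^{2D}_\eps$ satisfying \eqref{mutualenergy} and \eqref{mutualmetric}. Rewriting \eqref{mutualenergy} as $\limsup_{\eps\to 0}\phi_\eps(\tilde u_\eps,\tilde v_\eps) \le \phi_\eps(u_\eps,v_\eps) - \phi_0(y,w,\theta) + \phi_0(\tilde y,\tilde w,\tilde\theta) \to \phi_0(\tilde y,\tilde w,\tilde\theta)$ — where I used \eqref{ineq: mintominreverse} in the last step — together with the exact limit \eqref{mutualmetric} for the metric term, I get
\begin{align}\label{eq:planlimsup}
\limsup_{\eps \to 0} \mathbf{\Phi}_\eps(\tau,(u_\eps,v_\eps),\tilde u_\eps, \tilde v_\eps) \le \mathbf{\Phi}_0(\tau,(y,w,\theta),\tilde y, \tilde w, \tilde\theta).
\end{align}
Chaining the minimality of $Y_{\eps,\tau}$ in the two-dimensional scheme with \eqref{eq:planliminf} and \eqref{eq:planlimsup} yields
$$\mathbf{\Phi}_0(\tau,(y,w,\theta),U_\tau) \le \liminf_{\eps\to 0}\mathbf{\Phi}_\eps(\tau,(u_\eps,v_\eps),Y_{\eps,\tau}) \le \limsup_{\eps\to 0}\mathbf{\Phi}_\eps(\tau,(u_\eps,v_\eps),\tilde u_\eps,\tilde v_\eps) \le \mathbf{\Phi}_0(\tau,(y,w,\theta),\tilde y, \tilde w, \tilde\theta).$$
Since $(\tilde y,\tilde w,\tilde\theta) \in \mathscr{S}^{1D}$ is arbitrary, this proves (i), and taking $(\tilde y,\tilde w,\tilde\theta)=U_\tau$ forces all inequalities in the chain to be equalities, giving (ii) and in particular $\lim_{\eps\to 0}\mathbf{\Phi}_\eps(\tau,(u_\eps,v_\eps),Y_{\eps,\tau}) = \mathbf{\Phi}_0(\tau,(y,w,\theta),U_\tau)$.

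Finally, (iii) is obtained by separating the two terms in the converging sum. From (ii) we know $\mathbf{\Phi}_\eps(\tau,(u_\eps,v_\eps),Y_{\eps,\tau}) = \tfrac{1}{2\tau}\mathcal{D}_\eps((u_\eps,v_\eps),Y_{\eps,\tau})^2 + \phi_\eps(Y_{\eps,\tau})$ converges to $\tfrac{1}{2\tau}\mathcal{D}_0((y,w,\theta),U_\tau)^2 + \phi_0(U_\tau)$; by Theorem~\ref{th: lscD} the first summand satisfies $\liminf_{\eps\to 0}\tfrac{1}{2\tau}\mathcal{D}_\eps((u_\eps,v_\eps),Y_{\eps,\tau})^2 \ge \tfrac{1}{2\tau}\mathcal{D}_0((y,w,\theta),U_\tau)^2$, and by Theorem~\ref{th: Gamma}(i) the second satisfies $\liminf_{\eps\to 0}\phi_\eps(Y_{\eps,\tau}) \ge \phi_0(U_\tau)$; since the sum of the two liminf's cannot exceed the limit of the sum, both liminf's must be limits and equal to their respective lower bounds, which gives (iii) (and also strong convergence of $Y_{\eps,\tau}$ in the sense of $\pi\rho$, once one notes that convergence of the quadratic energies together with weak convergence upgrades to strong convergence of the relevant strains, as in the proof of the static recovery statement). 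The main obstacle is the $\limsup$-inequality \eqref{eq:planlimsup}, i.e.\ the construction of a mutual recovery sequence that is simultaneously compatible with the elastic energy and the viscous dissipation — but this is precisely the content of Lemma~\ref{lem:mutual}, which is where the compatibility hypotheses \eqref{quadraticforms} or \eqref{eq:quadraticform:minimumnot0} enter; once that lemma is available, the remaining steps are the routine $\Gamma$-convergence bookkeeping sketched above.
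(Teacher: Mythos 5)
Your proposal is correct and follows essentially the same route as the paper: the liminf bound via Theorem~\ref{th: Gamma}(i) and Theorem~\ref{th: lscD}, the limsup bound via the mutual recovery sequence of Lemma~\ref{lem:mutual} combined with the energy convergence in \eqref{ineq: mintominreverse}, chaining with the minimality of $Y_{\eps,\tau}$ to get (i) and (ii), and then splitting the converging sum to deduce (iii). The only cosmetic difference is that the paper tests with a $\delta$-almost minimizer of $\mathbf{\Phi}_0(\tau,(y,w,\theta),\cdot)$ instead of an arbitrary competitor, which is an equivalent bookkeeping choice.
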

\begin{proof}
	The proof is based on the fundamental  property that $\Gamma$-convergence induces convergence of minima and  \EEE minimizers.  By Theorem \ref{th: Gamma}\BBB(i) \EEE and Theorem \ref{th: lscD} we have
	\begin{align}
		\inf\limits_{v \in \mathscr{S}^{1D}}  \mathbf{\Phi}_0(\tau, (y,w,\theta),v) \le    \mathbf{\Phi}_0(\tau, (y,w,\theta),\BBB U_{\tau}\EEE) \EEE \leq \liminf\limits_{\eps \to 0} \mathbf{\Phi}_\eps(\tau, (u_\eps,v_\eps),\BBB Y_{\eps,\tau} \EEE). \label{ineq: mintomin}
	\end{align}
	Let $\delta>0$ and consider $(\tilde y, \tilde w, \tilde \theta) \in \mathscr{S}^{1D}$ such that
	\begin{align*}
	\inf\limits_{v \in \mathscr{S}^{1D}}  \mathbf{\Phi}_0(\tau, (y,w,\theta),v) + \delta \geq \mathbf{\Phi}_0(\tau, (y,w,\theta),(\tilde y, \tilde w, \tilde \theta)).
	\end{align*}
	In view of Lemma \ref{lem:mutual} and \eqref{ineq: mintominreverse}, \EEE we find a mutual recovery sequence $(\tilde u_\eps, \tilde v_\eps)_\eps$ such that
	\begin{align}
		\inf\limits_{v \in \mathscr{S}^{1D}}  \mathbf{\Phi}_0(\tau, (y,w,\theta),v) + \delta &\geq \limsup\limits_{\eps \to 0}  \phi_\eps (\tilde u_\eps, \tilde v_\eps) \EEE + \limsup\limits_{\eps \to 0} \frac{1}{2\tau} \mathcal{D}_\eps^2 ((u_\eps, v_\eps),(\tilde u_\eps, \tilde v_\eps)) \label{ineq:mintominreverse2} \\
		&\geq \limsup\limits_{\eps \to 0} \mathbf{\Phi}_\eps(\tau, (u_\eps,v_\eps), (\tilde u_\eps, \tilde v_\eps)) \ge \limsup\limits_{\eps \to 0} \mathbf{\Phi}_\eps(\tau, (u_\eps,v_\eps), \BBB Y_{\eps,\tau} \EEE), \nonumber
	\end{align}
	where the last step follows from minimality of $\BBB Y_{\eps,\tau} \EEE$. \EEE
	By sending $\delta \to 0$, \eqref{ineq: mintomin} and \eqref{ineq:mintominreverse2} imply (i) and (ii) \EEE as all inequalities are actually equalities. \EEE Finally, \EEE (iii) follows by (ii) and Theorems~\ref{th: Gamma}\EEE (i) \EEE and \ref{th: lscD}.
\end{proof}

 The following lemma \ZZZ helps us to show that the convergence of the sequences in Theorems \ref{thm:curveofmaximalslope:energyidentity} and \ref{thm:relation2d1d} holds in a strong sense. \EEE

\begin{lemma}[Strong convergence of recovery sequences]\label{lem:strongconvergenceofrecoverysequences}
	Let $(u_\eps,v_\eps)_\eps$ \ZZZ with $(u_\eps,v_\eps) \in \mathscr{S}_\eps^{2D}$ \EEE be a sequence such that
	\begin{align*}
	(u_\eps, v_\eps) \stackrel{\pi\sigma}{\to} (y,w,\theta) \quad \text{ and }
	\quad  \phi_\eps(u_\eps,v_\eps) \to \phi_0 (y,w,\theta).
	\end{align*}
	Then, we have $	(u_\eps, v_\eps) \stackrel{\pi\rho}{\to} (y,w,\theta)$.
\end{lemma}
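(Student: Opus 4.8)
The statement says: if $(u_\eps,v_\eps)\stackrel{\pi\sigma}{\to}(y,w,\theta)$ and $\phi_\eps(u_\eps,v_\eps)\to\phi_0(y,w,\theta)$, then the convergence is actually strong, i.e.\ $(u_\eps,v_\eps)\stackrel{\pi\rho}{\to}(y,w,\theta)$. The plan is to upgrade the weak convergences of $E^\eps y_\eps$, $\nabla_\eps w_\eps$ and $\nabla_\eps^2 w_\eps$ from Proposition~\ref{thm:compactness} to strong ones by a standard ``convergence of norms'' argument, exploiting that the quadratic form $Q_W^2$ is positive definite on $\R^{2\times2}_{\rm sym}$ and hence induces an equivalent Hilbert norm on the relevant $L^2$-spaces.

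First I would recall the scaled representation \eqref{def:vK2D-scaled}, so that $\phi_\eps(y_\eps,w_\eps) = \tfrac12\int_S Q_W^2(E^\eps y_\eps + \tfrac12\nabla_\eps w_\eps\otimes\nabla_\eps w_\eps) + \tfrac1{24}\int_S Q_W^2(\nabla_\eps^2 w_\eps)$, and similarly that $\phi_0(y,w,\theta)$ is given by \eqref{def:enegeryphi}. By Proposition~\ref{thm:compactness} we have, up to a subsequence, $E^\eps y_\eps \weakly E$, $\nabla_\eps w_\eps \weakly (w',\theta)$, $\nabla_\eps^2 w_\eps \weakly \binom{w''\ \ \theta'}{\theta'\ \ \gamma}$ in the respective $L^2(S)$-spaces, with $E_{11}=\partial_1 y_1$. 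Moreover, by the compact embedding $W^{1,2}(S)\subset\subset L^4(S)$ we get $\nabla_\eps w_\eps\to(w',\theta)$ strongly in $L^4(S;\R^2)$, hence $\nabla_\eps w_\eps\otimes\nabla_\eps w_\eps \to (w',\theta)\otimes(w',\theta)$ strongly in $L^2(S;\R^{2\times2})$. Consequently $A_\eps := E^\eps y_\eps + \tfrac12\nabla_\eps w_\eps\otimes\nabla_\eps w_\eps$ converges weakly in $L^2$ to $A := E + \tfrac12(w',\theta)\otimes(w',\theta)$.

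Next I would compare energies. On one hand, weak lower semicontinuity of $B\mapsto\int_S Q_W^2(B)$ (positive semidefiniteness of $Q_W^2$) gives $\liminf_\eps \int_S Q_W^2(A_\eps)\ge\int_S Q_W^2(A)$ and $\liminf_\eps\int_S Q_W^2(\nabla_\eps^2 w_\eps)\ge\int_S Q_W^2\big(\binom{w''\ \theta'}{\theta'\ \gamma}\big)$. On the other hand, by the structure of the reduced forms \eqref{quadraticform1d}--\eqref{quadraticform0d}, $\int_S Q_W^2(A)\ge\int_S Q_W^0(A_{11}) = \int_S Q_W^0(\partial_1 y_1 + \tfrac12|w'|^2)$ and $\int_S Q_W^2\big(\binom{w''\ \theta'}{\theta'\ \gamma}\big)\ge\int_S Q_W^1(w'',\theta') = \int_I Q_W^1(w'',\theta')$ (the last equality since the integrand is $x_2$-independent). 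Summing with the correct weights $\tfrac12$ and $\tfrac1{24}$ yields $\liminf_\eps\phi_\eps(u_\eps,v_\eps)\ge\tfrac12\int_S Q_W^0(\partial_1 y_1 + \tfrac12|w'|^2) + \tfrac1{24}\int_I Q_W^1(w'',\theta') = \phi_0(y,w,\theta)$. Since by hypothesis $\phi_\eps(u_\eps,v_\eps)\to\phi_0(y,w,\theta)$, all these inequalities must be equalities in the limit. In particular $\int_S Q_W^2(A_\eps)\to\int_S Q_W^2(A)$ and $\int_S Q_W^2(\nabla_\eps^2 w_\eps)\to\int_S Q_W^2\big(\binom{w''\ \theta'}{\theta'\ \gamma}\big)$, i.e.\ the $Q_W^2$-norms converge. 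Because $Q_W^2$ is positive definite on $\R^{2\times2}_{\rm sym}$, convergence of $Q_W^2$-norm plus weak convergence in $L^2$ forces strong $L^2$-convergence: $A_\eps\to A$ in $L^2(S;\R^{2\times2}_{\rm sym})$ and $\nabla_\eps^2 w_\eps\to\binom{w''\ \theta'}{\theta'\ \gamma}$ in $L^2(S;\R^{2\times2}_{\rm sym})$. Subtracting the already-established strong convergence of $\tfrac12\nabla_\eps w_\eps\otimes\nabla_\eps w_\eps$ gives $E^\eps y_\eps\to E$ strongly in $L^2$. Together with $\nabla_\eps w_\eps\to(w',\theta)$ in $W^{1,2}(S;\R^2)$ (which follows from strong $L^2$ convergence of the function values and of all first derivatives, the latter being exactly the strong convergence of $\nabla_\eps^2 w_\eps$) and $w_\eps\to w$ in $W^{2,2}(S)$, this is precisely $(u_\eps,v_\eps)\stackrel{\pi\rho}{\to}(y,w,\theta)$ in the sense of Proposition~\ref{thm:compactness} with the strong topology. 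Finally, since every subsequence has a further subsequence converging strongly to the same limit, the whole sequence converges.

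The main obstacle is bookkeeping rather than depth: one must be careful that the reduced-form inequalities $Q_W^2\ge Q_W^1\ge Q_W^0$ (via minimization over the dropped entries) are applied to the correct entries and that the weights $\tfrac12$, $\tfrac1{24}$, $\tfrac1{12}$ match up so that the chain of inequalities collapses to exactly $\phi_0$ with no slack, and that no compatibility assumption \eqref{quadraticforms}/\eqref{eq:quadraticform:minimumnot0} is secretly needed here — it is not, since we only use positive definiteness and lower semicontinuity, not any mutual recovery construction. A minor point is handling the $x_2$-dependence of $E_{12},E_{22},\gamma$: these do not enter the final energy $\phi_0$ and need not be constant in $x_2$, but the strong convergences above still hold entrywise, which is all that Proposition~\ref{thm:compactness} (strong version) requires.
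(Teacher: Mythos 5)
Your proposal is correct and follows essentially the same route as the paper's proof: weak lower semicontinuity of the convex quadratic energy plus the reduction inequalities $Q_W^2\ge Q_W^1\ge Q_W^0$ show $\liminf_\eps\phi_\eps\ge\phi_0$, the assumed energy convergence forces all inequalities to be equalities, and then weak convergence together with convergence of the $Q_W^2$-``norms'' and positive definiteness on $\R^{2\times2}_{\rm sym}$ upgrades the convergences of $E^\eps y_\eps+\tfrac12\nabla_\eps w_\eps\otimes\nabla_\eps w_\eps$ and $\nabla_\eps^2 w_\eps$ to strong ones, with the nonlinear term handled via the compact embedding $W^{1,2}(S)\subset\subset L^4(S)$, exactly as in the paper. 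Your additional remarks (no need for \eqref{quadraticforms}/\eqref{eq:quadraticform:minimumnot0}, handling of the $x_2$-dependent auxiliary limits) are consistent with the paper's argument.
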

\begin{proof} Let $(y_\eps,w_\eps)$ be the scaled version corresponding to $(u_\eps,v_\eps)$ introduced in \eqref{scaledfunctions}. \EEE   By Proposition  \ref{thm:compactness} we have 
	\begin{align}\label{eq: weaki}
\grad_\eps^2 w_\eps \rightharpoonup \begin{pmatrix}
	w'' & \theta' \\
	\theta' & \gamma \\
	\end{pmatrix} \quad \text{ and } \quad E^\eps y_\eps + \tfrac{1}{2} \nabla_\eps w_\eps \otimes \nabla_\eps w_\eps \rightharpoonup \begin{pmatrix}
	\partial_1 y_1 + \tfrac{1}{2}w''^2 & E_{12} + \frac{1}{2} w' \theta \\
	E_{12} + \frac{1}{2} w' \theta & E_{22} + \frac{1}{2} \theta^2 \\
	\end{pmatrix}
	\end{align}
	weakly  in  $L^2(S; \R^{2\times 2}_{{\rm sym}})$, for suitable functions  $\gamma \in \EEE L^2(S)\EEE$, $E_{12} \in L^2(S)$, and $E_{22} \in L^2(S)$.  It suffices to show that these convergences are strong. Then all (strong) convergences indicated in Proposition~\ref{thm:compactness} follow   where  $E^\eps y_\eps$ converges due to the compact embedding  $W^{1,2}(S) \subset \subset L^4(S)$.
We start the proof by observing that   the convexity of $Q_W^2$ and  $(u_\eps, v_\eps) \stackrel{\pi\sigma}{\to} (y,w,\theta)$  yield  
	\begin{align*}
	 \liminf\limits_{\eps \to 0} \phi_\eps (u_\eps,v_\eps) \ge 	\frac{1}{2}\int_S Q_W^2  \bigg( \hspace{-3pt} \begin{pmatrix}
		\partial_1 y_1 + \tfrac{1}{2}w'^2 & E_{12} + \frac{1}{2} w' \theta \\
		E_{12} + \frac{1}{2} w' \theta & E_{22} + \frac{1}{2} \theta^2 \\
		\end{pmatrix}\hspace{-3pt} \bigg) + \frac{1}{24} \int_{\ZZZ S \EEE} Q_W^2 \bigg(\hspace{-3pt} \begin{pmatrix}
		w'' & \theta' \\
		\theta' & \gamma \\
		\end{pmatrix}\hspace{-3pt} \bigg),
	\end{align*}
	where we used the representation of $\phi_\eps$ in  \eqref{def:vK2D-scaled}. By  \eqref{quadraticform1d}--\eqref{quadraticform0d} and \eqref{def:enegeryphi}, the right-hand side is bigger or equal than \EEE $\phi_0(y,w,\theta)$. \EEE This along with $\lim_{\eps \to 0} \phi_\eps (u_\eps,v_\eps) = \phi_0(y,w,\theta)$ shows that all inequalities  are actually equalities.  Using once more \eqref{eq: weaki} and the convexity of $Q_W^2$, we derive by \eqref{def:vK2D-scaled} that
\begin{align*}
& \frac{1}{2} \int_S \ZZZ Q_{W}^2 \EEE \big( E^\eps y_\eps + \frac{1}{2} \nabla_\eps w_\eps \otimes \nabla_\eps w_\eps \big)  \to 	\frac{1}{2}\int_S Q_W^2  \bigg( \hspace{-3pt} \begin{pmatrix}
		\partial_1 y_1 + \tfrac{1}{2}w'^2 & E_{12} + \frac{1}{2} w' \theta \\
		E_{12} + \frac{1}{2} w' \theta & E_{22} + \frac{1}{2} \theta^2 \\
		\end{pmatrix}\hspace{-3pt} \bigg), \\  
		& \frac{1}{24} \int_S \ZZZ Q_{W}^2 \EEE (\nabla_\eps^2 w_\eps) \to \frac{1}{24} \int_{\ZZZ S \EEE} Q_W^2 \bigg(\hspace{-3pt} \begin{pmatrix}
		w'' & \theta' \\
		\theta' & \gamma \\
		\end{pmatrix}\hspace{-3pt} \bigg). 
		\end{align*}
This together with weak convergence \eqref{eq: weaki}, the expansion \eqref{expansion}\BBB, and the positive definiteness on $\R^{2\times 2}_\sym $ of $Q_W^2$ \EEE shows that \eqref{eq: weaki} holds with strong convergence. \EEE
\end{proof}

\section{Proof of the main results}\label{sec results}

In this section, we give the \PPP proofs \EEE of the main results.

\subsection{Passage from 2D to 1D}
 
In this subsection, we prove Theorem \ref{thm:relation2d1d}. Let $M>0$. We fix a null sequence $(\eps_l)_{ l\in \N}$ and a sequence of initial data $\EEE (u^0_{\eps_l},v^0_{\eps_l}) \EEE \in \mathscr{S}^{2D}_{\eps,M}$ such that $(u^0_{\eps_l},v^0_{\eps_l}) \stackrel{\pi\sigma}{\to} (y^0,w^0,\theta^0) \in \mathscr{S}^{1D}$ and $\phi_\eps(u^0_{\eps_l},v^0_{\eps_l}) \to \phi_0(y^0,w^0,\theta^0)$ as $l \to \infty$. Moreover, we assume that \eqref{quadraticforms} or \eqref{eq:quadraticform:minimumnot0} holds. \EEE

%We define $(y_{\eps_k},w_{\eps_k}):= \pi_{\eps_k}(u_{\eps_k}, v_{\eps_k})$ and obtain that $(E^{\eps_k} y_{\eps_k})_{2j} \to 0$ for $j=1,2$ and $(\grad_{\eps_k}^2 w_{\eps_k})_{22} \to 0$ in $L^2(S)$ due to Lemma \ref{lem:strongconvergenceofrecoverysequences}. 

\begin{proof}[Proof of Theorem \ref{thm:relation2d1d}(i)] Let $(u_{\eps_l},v_{\eps_l})_{\eps_l}$ be a sequence of curves of maximal slopes for $\phi_{\eps_l}$ with respect to $\vert \partial \phi_{\eps_l}\vert_{\mathcal{D}_{\eps_l}}$  satisfying $(u_{\eps_l}(0),v_{\eps_l}(0)) = (u^0_{\eps_l}, v^0_{\eps_l})$. We check  \EEE that the assumptions of Theorem \ref{thm: sandierserfaty} are satisfied. The spaces $(\mathscr{S}^{2D}_{\eps_l,M},\mathcal{D}_{\eps_l})$ and $(\mathscr{S}^{1D},\mathcal{D}_{0})$ are complete metric spaces due to Lemma~\ref{th: metric space-lin}(i) and
Lemma~\ref{lem:complete1d}(i). In the notation of Subsection \ref{sec: auxi-proofs}, \BBB we have $\mathscr{S} = \pi_\eps(\mathscr{S}^{2D}_{\eps_l})$ and $\mathscr{S}_0 := \mathscr{S}^{1D}$. \EEE Clearly, we have $\mathscr{S}_0 \subset \mathscr{S}$.   \EEE   Moreover, Proposition \ref{thm:compactness} yields \eqref{basic assumptions2} \EEE  and Theorems \ref{th: Gamma}, \ref{th: lscD}, and \ref{theorem: lsc-slope} give \eqref{compatibility} and \eqref{eq: implication}. \EEE Furthermore, the slopes are strong upper gradients due to Lemma \ref{thm:stronguppergradient2d}(iii) and Lemma~\ref{Thm:representationlocalslope1}(ii). \EEE As the energies of the curves of maximal slopes are uniformly bounded depending only on the initial data, \ZZZ see~\eqref{maximalslope}, \EEE we can apply Theorem \ref{thm: sandierserfaty}. This \EEE yields the existence of a curve of maximal slope $(y,w,\theta)$ for $\phi_0$ with respect to $\vert \partial \phi_0 \vert_{\mathcal{D}_0}$ and the convergence $(u_{\eps_l}(t),v_{\eps_l}(t)) \stackrel{\pi \sigma}{\to}  (y(t),w(t),\theta(t))$ for $t\geq 0$\BBB, up \EEE to a subsequence. It remains to observe that the convergence is actually strong. This follows from the fact that $\lim_{l \to \infty} \phi_{\eps_l}(u_{\eps_l}(t), \BBB v_{\eps_l}(t)) \EEE = \phi_0(y(t),w(t),\theta(t))$ for $t \geq 0$ (see Theorem \ref{thm: sandierserfaty}) and Lemma \ref{lem:strongconvergenceofrecoverysequences}.
\end{proof}

\begin{proof}[Proof of Theorem \ref{thm:relation2d1d}(iii)]
Let $(\tau_l)_l$ be a null sequence and let $\bar Y_{\eps_l,\tau_l}$ be a discrete solution to the two-dimensional problem.  As in the previous proof, we check that all assumptions \EEE of \EEE Theorem~\ref{th:abstract convergence 2} are satisfied. \EEE Thus, there exists a subsequence such that we have
\begin{align*}
\bar Y_{\eps_l,\tau_l} (t) \stackrel{\pi\sigma} \to (y(t),w(t),\theta(t))
\end{align*}
for all $t\geq 0$ as $l\to \infty$ and $(y,w,\theta)$ is a curve of maximal slope for $\phi_0$ with respect to $\vert \partial \phi_0 \vert_{\mathcal{D}_0}$.
Moreover, the convergence holds in a strong sense for all $t\ge 0$ due to Lemma \ref{lem:strongconvergenceofrecoverysequences}.
\end{proof} 
 
% 
%The space $(\mathscr{S}^{1D},\mathcal{D}_{0})$ is a complete metric space due to Lemma \ref{lem:complete1d} (i). Moreover, Proposition \ref{thm:compactness} yields the compactness and Theorems \ref{th: Gamma}, \ref{th: lscD} and \ref{theorem: lsc-slope} provide the lower semicontinuity. Furthermore, the local slope is a strong upper gradient due to  Theorem \ref{Thm:representationlocalslope1} and Theorem \ref{th:abstract convergence 2} is applicable. 
% 

\begin{proof}[Proof of Theorem \ref{thm:relation2d1d}(ii)]
 Let $\tau>0$  and let $\bar Y_{\eps_l,\tau}$   be \EEE  a discrete solution to the two-dimensional problem \EEE with $\bar Y_{\eps_l,\tau}(0) = (u_{\eps_l}^0, v_{\eps_l}^0)$. \EEE \EEE By \EEE construction, the energies of the discrete solution $\bar Y_{\eps_l,\tau}$ are uniformly bounded \ZZZ depending \EEE only on the initial data. \EEE Thus, by a diagonal argument and Proposition \ref{thm:compactness}, we obtain \EEE a subsequence and $(U_\tau^n)_{n \in \N}$ such that $\bar Y_{\eps_l,\tau}(n\tau)  \stackrel{\pi\sigma}{\to} U_\tau^n$ for all $n \in \N$. Now, we need to show that 
\begin{align*}
{\rm(i)} \quad U_{\tau}^n = \argmin\limits_{v \in \mathscr{S}^{1D}} \mathbf{\Phi_0}(\tau, U_{\tau}^{n-1}; v), \qquad {\rm(ii)}\quad \bar Y_{\eps_l,\tau}(n\tau)  \stackrel{\pi\rho}{\to} U_{\tau}^n, \qquad {\rm (iii)} \quad  \phi_{\eps_l}(\bar Y_{\eps_l,\tau}(n\tau) ) \to \phi_0(U_{\tau}^n) \EEE
\end{align*}
for all $n \in \N$.  We show properties (i)--(iii) \EEE by induction. Suppose that  $\bar Y_{\eps_l,\tau}( (n-1)\EEE \tau)  \stackrel{\pi\rho}{\to} U_\tau^{n-1}$ and $\phi_{\eps_l}(\bar Y_{\eps_l,\tau}((n-1)\tau) ) \to \phi_0(U^{n-1}_\tau)$ \EEE  for a fixed $n \in \N$. (Clearly, this holds for $n=1$ \RRR by Lemma~\ref{lem:strongconvergenceofrecoverysequences} as $\bar Y_{\eps_l,\tau}(0)  \stackrel{\pi\sigma}{\to} U^{0}_\tau$  and  $\phi_{\eps_l}(\bar Y_{\eps_l,\tau}(0) ) \to \phi_0(U^{0}_\tau)$ by assumption.) \EEE Due to Theorem~\ref{thm:gammaofscheme}, the element $U_\tau^{n}$ is a minimizer of $\mathbf{\Phi_0}(\tau, U_\tau^{n-1}; \cdot )$ which \EEE shows (i). Moreover,   Theorem~\ref{thm:gammaofscheme} also yields $\phi_{\eps_l}(\bar Y_{\eps_l,\tau}(n\tau)) \to \phi_0(U_\tau^{n})$ \EEE which gives \EEE (iii). Finally,  \EEE   (ii) follows by Lemma~\ref{lem:strongconvergenceofrecoverysequences}. \BBB By defining $\bar U_\tau$ as in \eqref{eq: ds-new2} we can conclude. \EEE
\end{proof}

\subsection{Solutions in 1D} In this subsection we \EEE prove Theorem \ref{thm:curveofmaximalslope:energyidentity}.

\begin{proof}[Proof of Theorem \ref{thm:curveofmaximalslope:energyidentity}(i)]
Our goal is to apply \EEE Theorem \ref{th:abstract convergence 2}: instead of a sequence of metric spaces, we only consider the single \EEE metric space $(\mathscr{S}^{1D},\mathcal{D}_0)$ which is complete due to Lemma~\ref{lem:complete1d}(i). We let $\sigma$ be the weak convergence in $(\mathscr{S}^{1D}, \Vert \cdot \Vert_{can})$, \EEE for which Lemma \ref{lem:complete1d}(iii) provides the compactness property \eqref{basic assumptions2}. By  Lemma~\ref{lem:complete1d}(iv) and Lemma~\ref{thm:stronguppergradient1d}(iii) we  get \eqref{compatibility} and \eqref{eq: implication}. \EEE As $\vert \partial \phi_0 \vert_{\mathcal{D}_0}$ is a strong upper gradient, see Lemma~\ref{Thm:representationlocalslope1}(ii), Theorem \ref{th:abstract convergence 2} yields the convergence of time-discrete solution to a curve of maximal slope. \EEE Strong convergence with respect to $\Vert \cdot \Vert_{can}$ can be obtained by repeating the arguments in the proof of Lemma \ref{lem:strongconvergenceofrecoverysequences}. We omit the details. Eventually, convergence with respect to $\mathcal{D}_0$ is induced by Lemma \ref{sec:prop1d}(ii). \EEE  
\end{proof}
\begin{rem}[Alternative existence proof]
	\normalfont{
	If one is only interested in existence and not in \EEE time-discrete \EEE approximation, one could directly use Theorem \ref{thm:relation2d1d}(i): by Theorem \ref{th: Gamma} we can construct a  recovery sequence $(u^0_\eps,v^0_\eps)_\eps$ such that $(u^0_\eps,v^0_\eps) \stackrel{\pi\rho}{\to} (y^0,w^0,\theta^0)$. Then, curves of maximal slope $(u_\eps,v_\eps)$ for $\phi_\eps$ with respect to $\vert \partial \phi_\eps \vert_{\mathcal{D}_\eps}$ in the two-dimensional setting exist by  \cite[Theorem 2.2]{MFMKDimension}\BBB, up to minor adjustments, \EEE and by \EEE Theorem~\ref{thm:relation2d1d}(i) we conclude the proof. Note, however, that in this way we need to require  \eqref{quadraticforms} or \eqref{eq:quadraticform:minimumnot0}. \eop \EEE}
\end{rem}

After having shown existence of  curves of maximal slope, our goal is to establish a relation  \EEE   to the effective one-dimensional equations, \EEE see Theorem \ref{thm:curveofmaximalslope:energyidentity}(ii). \EEE The natural idea is to make use of the energy identity \eqref{maximalslope}. For this purpose, we use a finer representation of the local slope $\vert \partial \phi_0 \vert_{\mathcal{D}_0}$. Afterwards, we give sharp estimates for the metric derivative and the derivative of $\phi_0 \circ (y,w,\theta)$. This will provide enough information for the relation to the equations\ZZZ. For \EEE the following arguments, \EEE it is convenient to introduce   the abbreviations
\begin{align}
&H(y,w,\theta \vert \hat w):= ( \partial_1 y_1 + w^\prime \hat w^\prime, w^{\prime\prime}, \theta^\prime) \in L^2(S; \R^3),\nonumber \\
&G(y,w,\theta):= (\partial_1 y_1 + \frac{\vert \wpr \vert^2}{2},\wprpr, \theta^\prime) \in L^2(S;\R^3)\label{def:HandG}
\end{align}
for $(y,w,\theta) \in \mathscr{S}^{1D}$ and $\hat w \in W^{2,2}(I)$. By an elementary computation we get \EEE 
\begin{align}
G(y,w, \theta)\hspace{-0.03cm} - \hspace{-0.03cm} G( \tilde y, \tilde w, \tilde \theta) & = (\partial_1 y_1 - \partial_1 \tilde y_1 + \vert\wpr\vert^2 - \twpr \wpr,\wprpr- \twprpr, \theta^\prime - \tilde \theta^\prime) 
\hspace{-0.03cm} - \hspace{-0.03cm}\big( \tfrac{\vert \wpr \vert^2}{2} - \wpr \twpr + \tfrac{\vert \twpr \vert^2}{2},0,0\big) \notag \\
& = H(y-\tilde y, w- \tilde w, \theta - \tilde \theta \vert w) - \tfrac{1}{2}((w'-\tilde w')^2,0,0).\label{eq:HandG}
\end{align}
We further introduce extended quadratic forms
\begin{align}\label{eq:extendedquadraticform}
\bar Q_S(x_1,x_2,x_3) := Q_S^0(x_1) + \frac{1}{12} Q_S^1(x_2,x_3)
\end{align} for $(x_1,x_2,x_3)^T \in \R^3$ and $S = W,R$.

\begin{lemma}[Representation of the energy and the metric]\label{lemma:representation} For $(y,w,\theta), (\tilde y, \tilde w, \tilde \theta) \in \mathscr{S}^{1D}$, \RRR it holds \EEE 
\begin{align*}
\phi_0(y,w,\theta)&= \frac{1}{2} \int_S \bar Q_W \big(G(y,w,\theta)\big) \ZZZ, \EEE \\
\mathcal{D}_0^2((y,w,\theta),(\tilde y, \tilde w, \tilde \theta)) &= \int_S \bar Q_R\big(G(y,w,\theta) - G(\tilde y, \tilde w, \tilde \theta)\big). 
\end{align*}

\end{lemma}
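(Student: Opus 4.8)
The plan is to verify both identities by straightforward algebraic manipulation, recalling only the definitions of the objects involved: the energy $\phi_0$ in \eqref{def:enegeryphi} (with $f^{1D}=g^{1D}=0$ by the standing assumption of Section~\ref{sec:energy-dissipation}), the metric $\mathcal{D}_0$ in \eqref{def:metric}, the abbreviations $H$ and $G$ in \eqref{def:HandG}, and the extended quadratic form $\bar Q_S$ in \eqref{eq:extendedquadraticform}.

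\medskip

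\noindent\textbf{Energy identity.} First I would recall that $G(y,w,\theta) = (\partial_1 y_1 + \tfrac{|w'|^2}{2}, w'', \theta') \in L^2(S;\R^3)$ and that, by definition \eqref{eq:extendedquadraticform},
$$\bar Q_W\big(G(y,w,\theta)\big) = Q_W^0\Big(\partial_1 y_1 + \tfrac{|w'|^2}{2}\Big) + \tfrac{1}{12}\, Q_W^1(w'',\theta').$$
Integrating over $S$ and using that $w$ and $\theta$ do not depend on $x_2$, so that $\int_S Q_W^1(w'',\theta') = \int_I Q_W^1(w'',\theta')$ (the $x_2$-slab has unit length), gives
$$\tfrac12 \int_S \bar Q_W\big(G(y,w,\theta)\big) = \tfrac12 \int_S Q_W^0\Big(\partial_1 y_1 + \tfrac{|w'|^2}{2}\Big) + \tfrac{1}{24}\int_I Q_W^1(w'',\theta'),$$
which is exactly $\phi_0(y,w,\theta)$ as written in \eqref{def:enegeryphi} (with vanishing forces). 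This is immediate once the bookkeeping of the $x_2$-variable is noted.

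\medskip

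\noindent\textbf{Metric identity.} For the second identity I would first compute the difference $G(y,w,\theta) - G(\tilde y,\tilde w,\tilde\theta)$ componentwise:
$$G(y,w,\theta) - G(\tilde y,\tilde w,\tilde\theta) = \Big(\partial_1 y_1 - \partial_1 \tilde y_1 + \tfrac{|w'|^2}{2} - \tfrac{|\tilde w'|^2}{2},\ w'' - \tilde w'',\ \theta' - \tilde\theta'\Big).$$
Applying $\bar Q_R$ and integrating over $S$, again using the $x_2$-independence of the second and third components to replace $\int_S$ by $\int_I$ in the bending-twisting block, yields
$$\int_S \bar Q_R\big(G(y,w,\theta) - G(\tilde y,\tilde w,\tilde\theta)\big) = \int_S Q_R^0\Big(\partial_1 y_1 - \partial_1 \tilde y_1 + \tfrac{|w'|^2}{2} - \tfrac{|\tilde w'|^2}{2}\Big) + \tfrac{1}{12}\int_I Q_R^1(w''-\tilde w'',\theta'-\tilde\theta'),$$
and the right-hand side is precisely $\mathcal{D}_0^2((y,w,\theta),(\tilde y,\tilde w,\tilde\theta))$ by \eqref{def:metric}. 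I would remark in passing that the relation \eqref{eq:HandG} between $G(y,w,\theta) - G(\tilde y,\tilde w,\tilde\theta)$ and $H(y-\tilde y, w-\tilde w,\theta-\tilde\theta\,|\,w)$ is not needed for this lemma itself, but is recorded here because it will be used in the subsequent slope estimates.

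\medskip

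\noindent There is no substantial obstacle: the statement is essentially a reformulation of the definitions, and the only point requiring care is the consistent treatment of the trivial $x_2$-integration (using $\int_{-1/2}^{1/2}1\,{\rm d}x_2 = 1$, together with $\int_{-1/2}^{1/2}x_2\,{\rm d}x_2 = 0$ and $\int_{-1/2}^{1/2}x_2^2\,{\rm d}x_2 = 1/12$ already invoked in identifying \eqref{def:enegeryphi} with \eqref{def:enegeryphi-intro}), which accounts for the factor $\tfrac{1}{12}$ inside $\bar Q_S$ versus the $\tfrac{1}{24}$ in $\phi_0$. Hence the proof is a short direct computation.
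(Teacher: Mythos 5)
Your proof is correct and follows essentially the same route as the paper, whose proof is a one-line appeal to the definitions \eqref{def:metric}--\eqref{def:enegeryphi} and \eqref{def:HandG}; you simply spell out the trivial $x_2$-integration that converts the $\tfrac{1}{12}$ in $\bar Q_S$ into the $\tfrac{1}{24}$ in $\phi_0$. Nothing is missing.
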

\begin{proof}
The statement follows directly from \eqref{def:metric}--\eqref{def:enegeryphi} and  \eqref{def:HandG}. \EEE
\end{proof}

By \EEE \eqref{quadraticform1d} and \eqref{quadraticform0d} we find that $\bar Q_S$ is positive definite on $\R^3$ for $S = W,R$. We denote by $\bar \C_S$ its associated bilinear form which induces a bijective mapping $(x_1,x_2,x_3) \mapsto \bar \C_S (x_1,x_2,x_3)$ from $\R^3$ to $\R^3$. By $\sqrt{\bar \C_S}$ we denote its unique root and by $\sqrt{\bar \C_S}^{-1}$ the inverse of $\sqrt{\bar \C_S}$.

%
% \BBB In the following, we drop the symbol for transposition as its clear from context. \EEE
%

\begin{lemma}[Fine representation of the one-dimensional slope]\label{thm:sloperepresentation}
There exists a differential operator ${\mathcal{L}}\colon\mathscr{S}^{1D} \to L^2(S; \R^3)$ satisfying
\begin{align*}
\int_S {\mathcal{L}}(y,w,\theta) \cdot H(\phi_y,\phi_w,\phi_\theta\vert w) = 0
\end{align*}
for all $(y,w,\theta) \in \mathscr{S}^{1D}$ and $(\phi_y,\phi_w,\phi_\theta) \in BN_{(0,0)}(S;\R^2)\times W_0^{2,2}(I)\times W_0^{1,2}(I)$ such that the local slope at $(y,w,\theta) \in \mathscr{S}^{1D}$ can be represented by 
\begin{align*}
\vert \partial \phi_0 \vert_{\mathcal{D}_0} (y,w,\theta) = \Big \Vert \sqrt{\bar \C_R}^{-1} \big(\bar \C_W G(y,w,\theta) + {\mathcal{L}}(y,w,\theta) \big) \Big \Vert_{L^2(S;\R^3)}.
\end{align*}
\end{lemma}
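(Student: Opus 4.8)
The plan is to compute the local slope by working with the representation from Lemma~\ref{Thm:representationlocalslope1}(i), or equivalently directly from the definition, and reducing everything to the Hilbert-space structure induced by $\bar Q_R$ on $L^2(S;\R^3)$. First I would use Lemma~\ref{lemma:representation} to write both $\phi_0$ and $\mathcal{D}_0^2$ in terms of $G(y,w,\theta)$ and the positive definite forms $\bar Q_W, \bar Q_R$. The key algebraic identity is \eqref{eq:HandG}: the difference $G(y,w,\theta)-G(\tilde y,\tilde w,\tilde\theta)$ equals a \emph{linear} term $H(y-\tilde y,w-\tilde w,\theta-\tilde\theta\mid w)$ plus the quadratic remainder $-\tfrac12((w'-\tilde w')^2,0,0)$, which is of higher order as $(\tilde y,\tilde w,\tilde\theta)\to(y,w,\theta)$. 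Expanding $\phi_0(y,w,\theta)-\phi_0(\tilde y,\tilde w,\tilde\theta)$ via \eqref{expansion} with $Q=\bar Q_W$ gives a leading term $\int_S \bar\C_W G(y,w,\theta)\cdot\big(G(y,w,\theta)-G(\tilde y,\tilde w,\tilde\theta)\big)$ plus $O(\mathcal{D}_0^2)$, and similarly $\mathcal{D}_0(z,\tilde z)=\|G(z)-G(\tilde z)\|_{\bar Q_R}$. Hence the difference quotient in the definition of $|\partial\phi_0|_{\mathcal{D}_0}$ behaves, up to terms vanishing in the limit, like
\[
\frac{\int_S \bar\C_W G(y,w,\theta)\cdot v}{\|v\|_{\bar Q_R}}, \qquad v:= G(y,w,\theta)-G(\tilde y,\tilde w,\tilde\theta),
\]
so $|\partial\phi_0|_{\mathcal{D}_0}(y,w,\theta)$ is the supremum of this quantity over admissible directions $v$.

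The second step is to identify which directions $v$ are attainable. Writing $v=H(\phi_y,\phi_w,\phi_\theta\mid w)+(\text{h.o.t.})$ with $(\phi_y,\phi_w,\phi_\theta)\in BN_{(0,0)}(S;\R^2)\times W_0^{2,2}(I)\times W_0^{1,2}(I)$, the set of admissible first-order directions is the closed subspace $\mathcal{V}:=\overline{\{H(\phi_y,\phi_w,\phi_\theta\mid w):(\phi_y,\phi_w,\phi_\theta)\in BN_{(0,0)}\times W_0^{2,2}\times W_0^{1,2}\}}\subset L^2(S;\R^3)$ (one has to check that restricting to this linear subspace does not change the supremum, since the quadratic correction is lower order — this uses that $w'-\tilde w'$ can be made to vanish in $L^2$ while the linear part is kept fixed, exactly as in the proof of Lemma~\ref{Thm:representationlocalslope1}). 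Over a linear subspace the supremum of a linear functional divided by a Hilbert norm is computed by orthogonal projection: with inner product $\langle a,b\rangle_{\bar Q_R}=\int_S \bar\C_R a\cdot b$, we get
\[
\sup_{v\in\mathcal{V}\setminus\{0\}}\frac{\int_S \bar\C_W G\cdot v}{\|v\|_{\bar Q_R}} = \big\|P_{\mathcal{V}}\big(\sqrt{\bar\C_R}^{-1}\bar\C_W G\big)\big\|_{L^2}
= \big\|\sqrt{\bar\C_R}^{-1}\bar\C_W G + \sqrt{\bar\C_R}^{-1}\mathcal{L}(y,w,\theta)\big\|_{L^2},
\]
where $\mathcal{L}(y,w,\theta):=-\bar\C_R\sqrt{\bar\C_R}^{-1}P_{\mathcal{V}^{\perp}}\big(\sqrt{\bar\C_R}^{-1}\bar\C_W G\big)$ is, by construction, the element of $L^2(S;\R^3)$ characterised by $\int_S \mathcal{L}(y,w,\theta)\cdot H(\phi_y,\phi_w,\phi_\theta\mid w)=0$ for all admissible test functions (i.e.\ $\sqrt{\bar\C_R}^{-1}\mathcal{L}$ lies in the $\bar Q_R$-orthogonal complement of $\mathcal{V}$, which by the change of variables $a\mapsto\sqrt{\bar\C_R}\,a$ is exactly the $L^2$-orthogonality stated in the lemma). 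Rearranging $\sqrt{\bar\C_R}^{-1}\bar\C_W G+\sqrt{\bar\C_R}^{-1}\mathcal{L}=\sqrt{\bar\C_R}^{-1}(\bar\C_W G+\mathcal{L})$ gives precisely the asserted formula.

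The main obstacle I expect is the careful justification that the higher-order (quadratic) corrections in $G(z)-G(\tilde z)$ genuinely do not affect the supremum — that is, that the $\limsup$ in the definition of the local slope is realised, in the limit, by directions lying in the linear space $\mathcal{V}$, and that one may simultaneously (i) keep the linear part $H(\phi_y,\phi_w,\phi_\theta\mid w)$ prescribed, (ii) send $\mathcal{D}_0(z,\tilde z)\to 0$, and (iii) send the correction $\|w'-\tilde w'\|_{L^4}^2\to 0$ faster than $\mathcal{D}_0(z,\tilde z)$. This is the same mechanism that underlies the representation in Lemma~\ref{Thm:representationlocalslope1}(i) with the functions $\Phi^1,\Phi^2_M$, so I would borrow that structure: absorb the quadratic error terms into a $\Phi^2_M(\mathcal{D}_0)$-type correction in the numerator and a $\Phi^1(\mathcal{D}_0)$-type factor in the denominator, verify these are negligible at first order, and then pass to the Hilbert-space computation above. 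The remaining points — positive definiteness of $\bar Q_W,\bar Q_R$ on $\R^3$ (already noted after Lemma~\ref{lemma:representation}), well-definedness and boundedness of $\sqrt{\bar\C_R}^{-1}$, closedness of $\mathcal{V}$, and measurability/$L^2$-membership of $\mathcal{L}(y,w,\theta)$ — are routine.
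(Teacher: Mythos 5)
Your proposal is correct, and its skeleton matches the paper's: linearize the difference quotient via \eqref{expansion}, Lemma \ref{lemma:representation} and \eqref{eq:HandG}, discard the quadratic remainder $-\tfrac12((w'-\tilde w')^2,0,0)$ as negligible, and reduce the local slope to a supremum of a linear functional over the directions $H(\phi_y,\phi_w,\phi_\theta\mid w)$ with test functions in $BN_{(0,0)}(S;\R^2)\times W_0^{2,2}(I)\times W^{1,2}_0(I)$. Where you genuinely diverge is the final identification step: the paper sets up the quadratic minimization problem $\mathcal{F}(\bar y,\bar w,\bar\theta)=\tfrac12\int_S|\sqrt{\bar\C_R}H(\bar y,\bar w,\bar\theta\mid w)|^2-\int_S\bar\C_W[G,H(\bar y,\bar w,\bar\theta\mid w)]$ over the test space itself, proves attainment by the direct method (coercivity via Korn--Poincar\'e), and reads off $\mathcal{L}=\bar\C_R H(y_*,w_*,\theta_*\mid w)-\bar\C_W G$ from the Euler--Lagrange equation, finishing with a two-sided Cauchy--Schwarz estimate; you instead compute the supremum abstractly as an orthogonal projection of $\sqrt{\bar\C_R}^{-1}\bar\C_W G$ onto the $L^2$-closure of $\sqrt{\bar\C_R}H(\mathcal{P}\mid w)$ and define $\mathcal{L}$ through the orthogonal complement. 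The two are equivalent Hilbert-space facts; your route is shorter and avoids the coercivity/attainment argument, at the price of not producing an optimal test triple in $\mathcal{P}$ (which the lemma does not require and which is not used later), while the paper's route yields the concrete minimizer $(y_*,w_*,\theta_*)$. Two small points to tighten: the phrase about keeping the linear part fixed while sending $w'-\tilde w'$ to zero is not quite the right mechanism --- what one actually uses is the scaling $(\tilde y,\tilde w,\tilde\theta)=(y,w,\theta)-s(\hat y,\hat w,\hat\theta)$, $s\to0$, under which the linear part is of order $s$, the correction of order $s^2$, and the quotient is invariant by degree-one positive homogeneity; and invoking the $\Phi^1,\Phi^2_M$ machinery of Lemma \ref{Thm:representationlocalslope1}(i) is unnecessary here, since $\|w'-\tilde w'\|_{L^4}^2\le C\mathcal{D}_0^2$ already makes the corrections negligible directly in the definition of the slope, exactly as in the paper's argument.
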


The proof follows along the lines of the corresponding representation in dimension two, see \EEE \cite[Lemma 6.1]{MFMKDimension}. For the convenience of the reader, we give a self-contained proof in Appendix~\ref{Proof of Lemma thm:sloperepresentation}.

\begin{proof}[Proof of Theorem \ref{thm:curveofmaximalslope:energyidentity}(ii)]
We now use a standard technique to relate curves of maximal slope to PDEs in Hilbert spaces, see \cite[Section 1.4]{AGS}. More precisely,  \EEE  the proof follows the lines of \cite[Theorem 2.2(ii)]{MFMKDimension}. We divide the proof into \EEE  three steps. First, we construct \EEE the curve $(\xi_1,\xi_2,w,\theta)$ and prove the regularity stated in the theorem. Step 2 consists in deriving \EEE sharp estimates for $\vert v' \vert_{\mathcal{D}_0}$ and $\frac{d}{dt}\phi_0 \circ v$ for $v = (y,w,\theta)$. This allows \EEE to relate the curve to the one-dimensional system of equations in Step 3.

\noindent \textit{Step 1:} Let $(y,w,\theta)\colon [0,\infty) \to \mathscr{S}^{1D}$ be a curve of maximal slope for $\phi_0$ with respect to $\vert \partial \phi_0 \vert_{\mathcal{D}_0}$. Using \eqref{def:BN} there exist functions $\xi_1\colon[0,\infty) \to  W_{\hat u_1}^{1,2}(I)$ and $\xi_2\colon[0,\infty) \to W_{\hat u_2}^{2,2}(I)$ such that $y_1(t) (x) = \xi_1(t) (x_1) - x_2 \xi_2(t)' (x_1)$ and $y_2(t) (x) = \xi_2(t)(x_1)$ for all $t \geq 0$ almost everywhere in $S$.
%We apply Fubini's Theorem and obtain for all $0 \leq s\leq t \leq +\infty$
%\begin{align*}
%&\Vert y_1(t)-y_1(s) \Vert_{L^{2}(S,\R^2)}^2\\
% \geq &\int_S \big((\xi_1(t)-\xi_1(s)) - x_2 (\xi_2'(t) -\xi_2'(s))\big)^2 \\
% = &\int_S (\xi_1(t)-\xi_1(s))^2 -2x_2 (\xi_1(t)-\xi_1(s)) (\xi_2'(t) -\xi_2'(s)) + (\xi_2'(t) -\xi_2'(s))^2 \\
%\geq & \Vert \xi_1(t)-\xi_1(s) \Vert_{L^2(I)}^2 + \Vert \xi_2'(t) -\xi_2'(s) \Vert_{L^2(I)}^2.
%\end{align*}
%This estimation can be adapted to $\partial_1 y_1(t)- \partial_1y_1(s)$ analogously. Thus, using \linebreak $\Vert \xi_2(t) -\xi_2(s) \Vert_{L^2(I)} = \Vert y_2(t) -y_2(s) \Vert_{L^2(I)}$ yields
%\begin{align}
%C \Vert y(t)-y(s) \Vert_{W^{1,2}(S,\R^2)} \geq \Vert \xi_1(t)-\xi_1(s) \Vert_{W^{1,2}(I)} + \Vert \xi_2(t)-\xi_2(s) \Vert_{W^{2,2}(I)} \label{regularitymetric}
%\end{align}
%for all $0 \leq s\leq t \leq +\infty$. Similarly, we have for every $t \geq 0$
%\begin{align}
%C \Vert y(t) \Vert_{W^{1,2}(S,\R^2)} \geq \Vert \xi_1(t)\Vert_{W^{1,2}(I)} + \Vert \xi_2(t)\Vert_{W^{2,2}(I)}. \label{regularity}
%\end{align}
By definition, we have for every $t \geq 0$
\begin{align}
 \Vert \xi_1(t)\Vert_{W^{1,2}(I)} + \Vert \xi_2(t)\Vert_{W^{2,2}(I)} \leq C \Vert y(t) \Vert_{W^{1,2}(S;\R^2)} . \label{regularity}
\end{align}
As $(y,w,\theta)$ is a curve of maximal slope we get that $\phi_0(y(t),w(t),\theta(t))$ is decreasing in time, see \eqref{maximalslope}. This together with Lemma \ref{lem:complete1d}(iii) and \eqref{regularity} yields 
\begin{align}
(\xi_1,\xi_2,w,\theta) \in L^\infty\big([0,\infty);\mathcal{K} \big), \label{boundedlimiting}
\end{align}
where $\mathcal{K}$ is defined in \eqref{eq: mathcalK}. \EEE 
%Next, we want to show that $(\xi_1,\xi_2,w,\theta)$ is an absolutely continuous curve with respect to the Hilbert space $\mathcal{K}$.
As $(y(t),w(t),\theta(t))_{t \geq 0}$ is absolutely continuous with respect to $(\mathscr{S}^{1D},\mathcal{D}_0)$ we have that $\vert (y,w,\theta)'\vert_{\mathcal{D}_0} \in L^2([0,\infty))$. Then, by Lemma \ref{lem:complete1d}(ii), \eqref{regularity}, \EEE and  \eqref{boundedlimiting} we observe that $(\xi_1,\xi_2,w,\theta)$ is an absolutely continuous curve with respect to $\mathcal{K}$. 
Thus, by \cite[Remark~1.1.3]{AGS} we observe that $\xi_1$, $\xi_2$, $w$, and $\theta$ are differentiable for   a.e.\ $t$ and we have (\ref{regularitylimiting}). More precisely, for all $0 \leq s <t$, and almost everywhere in $S$ (respectively $I$) it holds that 
\begin{align}\label{lemma:fundamental}
f(t)-f(s) = \int_s^t \partial_t f(r) \, \EEE {\rm d}r \EEE \quad \text{for } f \in \{\partial_1 y_1, w', w'', \theta'\} .
\end{align}
% Thus, it holds for all $0 \le s \leq t \leq \infty$
%\begin{align*}
%&\Vert (\xi_1(t),\xi_2(t),w(t),\theta(t))-(\xi_1(s),\xi_2(s),w(s),\theta(s)) \Vert\\
%\leq  &\Vert \xi_1(t)-\xi_1(s) \Vert_{W^{1,2}(I)}+ \Vert \xi_2(t)-\xi_2(s) \Vert_{W^{2,2}(I)} + \Vert w(t)-w(s) \Vert_{W^{2,2}(I)} + \Vert \theta(t)-\theta(s) \Vert_{W^{1,2}(I)}\\
% = &C \Vert y(t)-y(s) \Vert_{W^{1,2}(S,\R^2)} + \Vert w(t)-w(s) \Vert_{W^{2,2}(I)} + \Vert \theta(t)-\theta(s) \Vert_{W^{1,2}(I)}\\
% \leq &C D((y(t),w(t), \theta (t)),(y(s), w (s), \theta(s))) + C\Vert \vert w^\prime (t)\vert^2 - \vert w'(s) \vert^2\Vert_{L^2(I)}  \\
% \leq &C D((y(t),w(t), \theta (t)),(y(s), w (s), \theta(s))) + C\Vert (w'(t)- w'(s)) \Vert_{L^4(I)} \Vert (w'(t) + w'(s)) \Vert_{L^4(I)}\\
% \leq &C D((y(t),w(t), \theta (t)),(y(s), w (s), \theta(s))) + C\Vert (w'(t)- w'(s)) \Vert_{W^{1,2}(I)}\\
% \leq &C D((y(t),w(t), \theta (t)),(y(s), w (s), \theta(s)))\\
% \leq &\int_s^t C \vert (y,w,\theta)'\vert_D(r) dr,
%\end{align*}
%where we have used that the energy identity from Theorem \ref{curveofmaximalslope:energyidentity} yields $\vert (y,w,\theta)' \vert_D \in L^2([0,\infty))$.
\textit{Step 2:}
As preparation for the representation of the metric derivative, we now consider the difference $G(y,w,\theta)(t) - G(y,w,\theta)(s)$. The identity (\ref{eq:HandG}) and the linearity of $H(\cdot,\cdot,\cdot \vert w(t))$ yield  \EEE for a.e.\ $t$ and a.e.\ $x \in S$
\begin{align}
\lim\limits_{s \to t} \frac{G(y,w,\theta)(t) - G(y,w,\theta)(s)}{t-s} 
=& H(\partial_t y(t), \partial_t w(t), \partial_t \theta(t)\vert w(t)) - \partial_t w(t) \EEE \lim\limits_{s \to t} \frac{1}{2}(w(t)-w(s))  \nonumber \\
=& H(\partial_t y(t), \partial_t w(t), \partial_t \theta(t)\vert w(t)). \label{derivativeG}
\end{align}
 Using \eqref{def:HandG}, \EEE (\ref{lemma:fundamental}), Poincar\'e's and \EEE Jensen's inequality, and Fubini's Theorem, we obtain for all $0\leq s \le t$
\begin{align}
\Big\Vert G(y(t),w(t),\theta(t))&-G(y(s),w(s),\theta(s)) - \int_s^t H(\partial_t y(r), \partial_t w(r), \partial_t \theta(r)\vert w(t)) \,\EEE {\rm d}r \EEE\Big\Vert^2_{L^2(S;\R^3)}\nonumber  \\ 
& = \int_S \frac{1}{2}(w'(t)-w'(s))^4
\leq  C \Big(\int_S (w''(t)-  w''(s))^2 \Big)^2  \nonumber \\
&=  C\Big( \int_S \vert t-s \vert^2 \Big( \frac{1}{\vert t-s \vert} \int_s^t \partial_t w^{\prime\prime}(r)\,\EEE {\rm d}r \EEE \Big)^2 \,\EEE {\rm d}x \EEE\Big)^{ 2}\nonumber \\
& \leq C \Big( \int_S \vert t-s \vert \int_s^t \partial_t w^{\prime\prime}(r)^2 \,\EEE {\rm d}r \EEE \,\EEE {\rm d}x \EEE \Big)^2
= C \vert t-s\vert^2 \Big( \int_s^t \Vert \partial_t w^{\prime\prime}(r)\Vert^2_{L^2(S)} \Big)^2. \label{estimation1identification}
\end{align}
We now estimate the metric derivative $\vert (y,w,\theta)'\vert_{\mathcal{D}_0}$. By Lemma \ref{lemma:representation}, (\ref{derivativeG}), and Fatou's Lemma we get for a.e.\ $t\geq 0$
\begin{align}
\vert (y,w,\theta)'\vert_{\mathcal{D}_0}(t) &= \lim\limits_{s \to t} \Bigg(\frac{{\mathcal{D}^2_0}\big((y(t),w(t),\theta(t)),(y(s),w(s),\theta(s))\big)}{\vert t-s\vert^{ 2}}\Bigg)^{1/2}\nonumber\\
&\geq \Bigg(\int_S \liminf\limits_{s \to t} \bar Q_R\bigg(\frac{G(y(t),w(t),\theta(t))-G(y(s),w(s),\theta(s))}{\vert t-s\vert}\bigg) \Bigg)^{ 1/2}\nonumber\\
&= \Big\Vert \sqrt{\bar \C_R}H(\partial_t y(t), \partial_t w(t), \partial_t \theta(t)\vert w(t))\Big\Vert_{L^2(S;\R^3)}. \label{inequality:metricderivative}
\end{align}
We now analyze the derivative $\frac{\rm d}{{\rm d}t} (\phi_0 \circ (y,w,\theta))(t)$ of the absolutely continuous curve $\phi_0 \circ (y,w,\theta)$. Note that for a.e.\ $t \geq 0$ we have $\lim_{s \to t} \int_s^t \Vert \partial_t w^{\prime\prime}(r) \Vert^2_{L^2(S)}\, \EEE {\rm d}r \EEE = 0$ by (\ref{regularitylimiting}) and, in a similar fashion, $\lim_{s \to t} {\vert s-t\vert^{-1} } {\Vert \int_s^t H(\partial_t y(r), \partial_t w(r), \partial_t \theta(r)\vert w(t)) \, \EEE {\rm d}r \EEE \Vert^2_{L^2(S;\R^3)}}= 0 $ by (\ref{regularitylimiting}) and Hölder's inequality. Thus, using Lemma \ref{lemma:representation}, \eqref{expansion}, \EEE and \eqref{derivativeG}--\EEE (\ref{estimation1identification}), we obtain for a.e.\ $t \geq 0$
\begin{align*}
\frac{\rm d}{{\rm d}t} ( \EEE \phi_0 \circ (y,w,\theta))(t) = &\lim\limits_{s \to t} \frac{\phi_0(y(t),w(t),\theta(t))-\phi_0(y(s),w(s),\theta(s))}{t-s}\\
\geq &\liminf\limits_{s \to t} \frac{1}{t-s}\int_S \bar \C_W \big[G(y(t),w(t),\theta(t)), G(y(t),w(t),\theta(t)- G(y(s),w(s),\theta(s))\big] \\
&- \limsup\limits_{s \to t} \frac{1}{2(t-s)}\int_S \bar Q_W\big(G(y(t),w(t),\theta(t) \EEE ) \EEE - G(y(s),w(s),\theta(s))\big)\\
\geq &\int_S \bar \C_W \big[G(y(t),w(t),\theta(t) \EEE ) \EEE , H(\partial_ty(t), \partial_tw(t), \partial_t \theta(t)\vert w(t))\big].
\end{align*}
By the property of $\mathcal{L}$ stated in Lemma \ref{thm:sloperepresentation}, and the fact that $\partial_ty(t), \partial_tw(t), \partial_t \theta(t)$ vanish on  $\partial I$ and $\partial S$, respectively, we get \EEE 
\begin{align*}
\frac{\rm d}{{\rm d}t}& ( \EEE \phi_0 \circ (y,w,\theta))(t)   \ge \hspace{-0.06cm} \int_S \big(\bar \C_W G(y(t),w(t),\theta(t))+{\mathcal{L}}(y(t),w(t),\theta(t))\big) \cdot H(\partial_ty(t), \partial_tw(t), \partial_t \theta(t)\vert w(t)) \\
= &\int_S \sqrt{\bar \C_R}^{-1}\big(\bar \C_WG(y(t),w(t),\theta(t))+{\mathcal{L}}(y(t),w(t),\theta(t))\big) \cdot \sqrt{\bar \C_R} H(\partial_ty(t), \partial_tw(t), \partial_t \theta(t)\vert w(t)).
\end{align*}
We find by
Lemma \ref{thm:sloperepresentation}, (\ref{inequality:metricderivative}), and Young's inequality
\begin{align*}
\frac{\rm d}{{\rm d}t}\phi_0((y,w,\theta)(t))\
 \geq - \frac{\vert (y,w,\theta)'\vert_{\mathcal{D}_0}^2(t)}{2} - \frac{\vert \partial \phi_0 \vert^2_{\mathcal{D}_0}(y(t),w(t),\theta(t))}{2}
\geq \frac{\rm d}{{\rm d}t}\phi_0((y,w,\theta)(t))
\end{align*}
for a.e.\ $t\geq 0$, where the last step is a consequence of the fact that $(y(t),w(t),\theta(t))$ is a curve of maximal slope with respect to $\phi_0$. Consequently, all inequalities employed in the proof are in fact equalities, and we get
\begin{align*}
\sqrt{\bar \C_R}^{-1}\big(\bar \C_WG(y(t),w(t),\theta(t))+{\mathcal{L}}(y(t),w(t),\theta(t))\big) +\sqrt{\bar \C_R} H(\partial_t y(t), \partial_t w(t), \partial_t \theta(t)\vert w(t))= 0
\end{align*}
pointwise a.e.\ in $S$ for a.e.\ $t \geq 0$. Multiplying the equation with $\sqrt{\bar \C_R}$ from the left and testing with $H(\phi_y,\phi_w,\phi_\theta\vert w(t))$ from the right for $(\phi_y,\phi_w,\phi_\theta) \in BN_{(0,0)}(S,\R^2) \times W_0^{2,2}(I) \times W_0^{1,2}(I)$ yields with the property of Lemma \ref{thm:sloperepresentation} for a.e.\ $t \geq 0$ 
\begin{align}
\int_S \Big( \bar \C_WG(y(t),w(t),\theta(t))+\bar \C_R H(\partial_ty(t), \partial_tw(t), \partial_t \theta(t)\vert w(t)) \Big) \cdot H(\phi_y,\phi_w,\phi_\theta\vert w(t)) = 0. \label{Fundamentalequation}
\end{align}
\textit{Step 3:}
Eventually, we verify that $(\xi_1,\xi_2,w,\theta)$ solve the one-dimensional equations. To this end, we use the identity (\ref{Fundamentalequation}) by choosing functions such that $\phi_i=\phi_j=0$ for $i,j \in \{y,w,\theta\}$ with $i\neq j$ in (\ref{Fundamentalequation}).
The simplest case is the derivation of (\ref{equation:4}) by setting $\phi_y = 0$ and $\phi_w = 0$. 
To this end, we recall \eqref{def:HandG} and \eqref{eq:extendedquadraticform} and remark that $\partial_i \bar Q_S(\cdot) = 2 ( \bar \C_S (\cdot))_i$ as $\bar \C_S$ is symmetric. Thus, an integration leads to \EEE (omitting the variable $t$ from now on) \EEE
\begin{align*}
0=& \int_S \Big( \bar \C_WG(y,w,\theta)+\bar \C_R H(\partial_ty, \partial_tw, \partial_t \theta\vert w) \Big) \cdot (0,0, \phi_\theta') \\
=& \frac{1}{24} \int_I \big(\partial_2 Q_W^1(w'',\theta') + \partial_2  Q_R^1(\partial_t w'', \partial_t \theta')\big)  \phi_\theta',
\end{align*}
which is exactly equation (\ref{equation:4}). Setting \EEE $\phi_y = 0$ and $\phi_\theta= 0$ leads to
%\begin{align*}
%\textstyle
%0 =& \int_S \bar \C_W\Big((\partial_1y_1 + \frac{\vert w'\vert^2}{2},0)+x_3(w'',\theta')] + \bar \C_R[(\partial_t \partial_1y_1 + w' \partial_tw',0) + x_3(\partial_t w'', \partial_t \theta')\Big)\\
%& \quad \quad \quad \cdot \Big(\phi_w' w,0)+x_3((\phi_w'',0)\Big).
%\end{align*}
%We now proceed similar to the formal derivation of the equations. We use the definition of $\C_R^0$ and obtain
%\begin{align*}
%0 =& \int_I \Big(\C^0_W \xi_1' + \frac{\vert w' \vert^2}{2} + \C_R^0\partial_t \xi_1' + w' \partial_t w'\Big)\cdot w\phi_w'\\
%& \frac{1}{12} \int_I \Big( \bar \C_W(w'',\theta')+ \bar \C_R(\partial_t w'', \partial_t \theta')\Big) \cdot (\phi_w'',0),
%\end{align*}
\begin{align*}
0 =& \hspace{-0.07cm} \int_I \Big(C^0_W (\xi_1' + \frac{\vert w' \vert^2}{2}) + C_R^0(\partial_t \xi_1' + w' \partial_t w')\Big) w'\phi_w'  + \hspace{-0.07cm}   \frac{1}{24} \EEE \int_I \Big(  \partial_1 Q_W^1(w'',\theta')+ \partial_1 Q_R^1(\partial_t w'', \partial_t \theta')\Big)  \phi_w'', \EEE
\end{align*}
\EEE where we used (\ref{def:BN}) and $\int_{-1/2}^{1/2}x_2  \, {\rm d}x_2 = 0$. \EEE This gives  (\ref{equation:3}).
The missing equations follow by setting $\phi_\theta = 0$ and $\phi_w =0$:  \BBB characterization \EEE (\ref{def:BN}) yields the existence of functions $\phi_{\xi_1} \in W^{1,2}_0 (I)$ and  $\phi_{\xi_2} \in W_0^{2,2}(I)$ such that $\partial_1 (\phi_y)_1 = \phi_{\xi_1}' - x_2 \phi_{\xi_2}''$. Inserting this and using $\int_{-1/2}^{1/2} x_2^2 \, {\rm d}x_2 = 1/12$ \EEE yields
\begin{align*}
0 = \int_I \Big( C_W^0(\xi_1' + \frac{\vert w' \vert}{2}) + C_R^0(\partial_t \xi_1' + w' \partial_t w')\Big) \phi_{\xi_1}' + \frac{1}{12} \Big( C_W^0 \xi_2''  + C_R^0 \partial_t \xi_2'' \Big) \phi_{\xi_2}''.
\end{align*}
As we can choose $\phi_{\xi_1}=0$ and $\phi_{\xi_2}=0$ independently, we obtain (\ref{equation:1}) and (\ref{equation:2}).
\end{proof}

\section*{Acknowledgements} \BBB This work was funded by  the DFG project FR 4083/5-1 and  by the Deutsche Forschungsgemeinschaft (DFG, German Research Foundation) under Germany's Excellence Strategy EXC 2044 -390685587, Mathematics M\"unster: Dynamics--Geometry--Structure. \EEE

\appendix
\section{One-dimensional properties}\label{sec:Appendix}

In this section we give the proofs of the properties in the one-dimensional setting.  The arguments are similar to the two-dimensional case and one can follow closely the lines of \cite{MFMK, MFMKDimension}. Yet, we include complete proofs here for \EEE  the reader's convenience. We start by proving Lemma~\ref{lem:complete1d}  which is rather elementary.   \EEE The proof of Lemma~\ref{Thm:representationlocalslope1} is \EEE more technical and is divided into the following steps: by constructing suitable ``generalized geodesics'', \EEE see Lemma \ref{PhiMPhi}, we establish the \EEE representation of the local slope $\vert \partial \phi_0 \vert_{\mathcal{D}_0}$ stated in \EEE Lemma~\ref{Thm:representationlocalslope1}(i).  This is at the core of proving Lemma~\ref{Thm:representationlocalslope1}(ii),(iii), where we additionally construct suitable \emph{mutual recovery sequences} for the weak lower semicontinuity of the slope similar to Lemma \ref{lem:mutual} (see Lemma \ref{lem:mutualrecovery}). \EEE Finally, at the end of the section, \EEE we give the  proof of \EEE Lemma \ref{thm:sloperepresentation}. In the following, \EEE $C>0$ denotes \EEE a universal constant that may change from line to line.  Moreover, the symbol $\rightharpoonup$ will stand for weak convergence in the space $(\mathscr{S}^{1D}, \Vert \cdot \Vert_{can})$. \EEE

\begin{proof}[Proof of Lemma \ref{lem:complete1d}]
	We first derive the lower bounds \eqref{ineq:lowerbound} and \eqref{ineq:lowerboundy}. By \EEE Poincaré's inequality and the fact that $Q_R^1$ \EEE is \EEE positive definite, we get \EEE for all $(y,w,\theta),(\tilde y,\tilde w,\tilde\theta) \in \mathscr{S}^{1D}$ that \EEE
	\begin{align*}
	  \Vert{w- \tilde w}\Vert_{W^{2,2}(I)}^2 + \Vert{\theta- \tilde \theta}\Vert_{W^{1,2}(I)}^2 & \le  C \Vert{w^{\prime\prime}- \tilde w^{\prime\prime}}\Vert_{L^2(I)}^2 + C\Vert{\theta^{\prime}- \tilde \theta^{\prime}}\Vert_{L^2(I)}^2  \nonumber\\
	 &  \le  C\int_I Q_R^1\big(w^{\prime\prime}- \tilde w^{\prime\prime}, \theta^\prime - \tilde \theta^{\prime}\big) 
	 \le C\mathcal{D}_0^2((y,w,\theta),(\tilde y,\tilde w,\tilde\theta)),
	\end{align*} \EEE
	 i.e.,  \eqref{ineq:lowerbound}  holds. \EEE By (\ref{def:BN}) and Korn-Poincaré's inequality we get
	\begin{align*}
	\Vert y - \tilde y\Vert_{W^{1,2}(S;\R^2)}^2 \leq C \Vert e(y-\tilde y) \Vert_{L^2(S; \R^{2 \times 2}_{\rm sym})}^2 = \EEE C \Vert \partial_1 y_1 - \partial_1 \tilde y_1\Vert_{L^2(S)}^2. 
	\end{align*}
 This along with the triangle inequality,  the positivity of $Q_R^0$, \EEE and H\"older's inequality \EEE yields 
	\begin{align*}
	\Vert y-\tilde y \Vert^2_{W^{1,2}(S;\R^2)} &\leq  C\int_S Q_R^0\Big(\partial_1 y_1 - \partial_1 \tilde y_1 + \tfrac{\vert w^\prime\vert^2}{2} - \tfrac{\vert \tilde w^{\prime}\vert^2}{2}\Big) + C\Vert \vert w^\prime \vert^2 - \vert \tilde w^\prime \vert^2\Vert^2_{L^2(I)} \\
	& \le  C {\mathcal{D}^2_0}((y,w, \theta),(\tilde y, \tilde w, \tilde \theta)) + C\Vert w^\prime +  \tilde w^\prime \Vert^2_{L^4(I)} \Vert w^\prime  -  \tilde w^\prime \Vert^2_{L^4(I)}.
	\end{align*}
	This shows \eqref{ineq:lowerboundy}. \EEE Thus, the embedding $W^{1,2}(I) \subset \subset L^4(I)$ and Hölder's inequality \EEE  yield that every converging sequence with respect to the topology induced by $\mathcal{D}_0$ converges with respect to $\Vert \cdot \Vert_{can}$ and vice versa. This shows (ii). \EEE  Using analogous computations to (ii), with $\phi_0$ in place of $\mathcal{D}_0^2$, \EEE  we find that (iii) holds. To see  (iv),  we consider \EEE  $(y_k,w_k,\theta_k) \rightharpoonup (y,w,\theta)$ and $( \tilde y_k,\tilde w_k,\tilde \theta_k) \rightharpoonup (\tilde y,\tilde w,\tilde \theta)$. As \EEE  $W^{1,2}(I) \subset\subset L^4(I)$, we obtain
	\begin{align*}
	\partial_1 ({y_k})_1 + \frac{\vert w_k^\prime \vert^2}{2} \rightharpoonup \partial_1 {y}_1  + \frac{\vert w^\prime \vert^2}{2}, \quad \partial_1 ({\tilde y_k})_1 + \frac{\vert \tilde w_k^\prime \vert^2}{2} \rightharpoonup \partial_1 \tilde{y}_1  + \frac{\vert \tilde w^\prime \vert^2}{2} \EEE \quad \text{in } L^2(S).
	\end{align*}
	As the quadratic forms $Q_W^0$, $Q_W^1$, $Q_R^0$, and $Q_R^1$ \EEE are positive definite, \EEE weak lower semicontinuity follows. \EEE We finally  prove (i). The positivity and the completeness follow from (ii). Eventually, the triangle inequality  follows from the fact that $\mathcal{D}_0^2$ is the sum of two    quadratic forms. \EEE
\end{proof}

We now aim at proving that $\vert \partial \phi_0 \vert_{\mathcal{D}_0}$ is \EEE weakly lower semicontinuous and a strong upper gradient. To verify this, we follow the approach of \cite{MFMKJV} which is based on a generalized convexity condition as the metric $\mathcal{D}_0$ and the energy $\phi_0$ are non-convex, due to the nonlinearity $|w'|^2$. We refer to \EEE \cite[Remark 1]{MFMKJV} for a detailed discussion. \EEE In the sequel, we frequently replace $(y,w,\theta)$ by a single variable $u$ for notational convenience. \EEE

\begin{lemma}[Convexity and generalized geodesics in the one-dimensional setting]\label{PhiMPhi}
	Let $M >0$.  Let $\Phi^1(t) := \sqrt{t^2 + C  t^3 + C  t^4}$ and $\Phi_{M}^2(t):= C\sqrt{ M} t^2 + C t^3 +  Ct^4$ for any $C>0$ large enough.  Then, \EEE for all $u_0 := (y_0,w_0, \theta_0) \EEE \in \mathscr{S}^{1D}$ satisfying $\phi_0(u_0) \leq M$ and all $u_1 := (y_1, w_1, \theta_1) \EEE \in \mathscr{S}^{1D}$ we have
	\begin{itemize}
		\item[(i)] $\mathcal{D}_0(u_0,u_s) \le s \Phi^1(\mathcal{D}_0(u_0,u_1))$
		\item[(ii)] $\phi_0(u_s) \leq (1-s) \phi_0(u_0) + s \phi_0(u_1) + s \Phi_M^2(\mathcal{D}_0(u_0,u_1))$
	\end{itemize}
	for $u_s:= (1-s) u_0 + s u_1$ and $s \in [0,1]$.
\end{lemma}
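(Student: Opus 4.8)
The plan is to verify both inequalities by direct computation using the explicit representation of $\phi_0$ and $\mathcal{D}_0$ in terms of the map $G$, namely $\phi_0(u) = \frac12\int_S \bar Q_W(G(u))$ and $\mathcal{D}_0^2(u_0,u_1) = \int_S \bar Q_R(G(u_0)-G(u_1))$ from Lemma~\ref{lemma:representation}, together with the algebraic identity \eqref{eq:HandG} relating $G$ along the linear interpolation to the linear operator $H$ and the quadratic correction term coming from $|w'|^2$. The key point is that although neither $\phi_0$ nor $\mathcal{D}_0$ is convex along linear interpolation, the nonconvexity is entirely due to the term $\frac12|w'|^2$, which produces errors that are controlled by powers of the $W^{2,2}$-distance of $w$, hence (via Poincar\'e and the positivity of $Q_R^1$) by powers of $\mathcal{D}_0(u_0,u_1)$; this is exactly the source of the cubic and quartic terms in $\Phi^1$ and $\Phi_M^2$.

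First I would fix $u_0,u_1$ and write $u_s = (1-s)u_0+su_1$, noting that $w_s' = (1-s)w_0'+sw_1'$ and similarly for all linear quantities. Using \eqref{eq:HandG} I would compute $G(u_s)-G(u_0) = H(s(u_1-u_0)\mid w_0) - \tfrac12((w_s'-w_0')^2,0,0) = s\,H(u_1-u_0\mid w_0) - \tfrac{s^2}{2}((w_1'-w_0')^2,0,0)$, since $w_s'-w_0' = s(w_1'-w_0')$. For part (i), I would plug this into $\mathcal{D}_0^2(u_0,u_s) = \int_S\bar Q_R(G(u_s)-G(u_0))$, expand the quadratic form $\bar Q_R$ via $\bar Q_R(a+b)\le 2\bar Q_R(a)+2\bar Q_R(b)$ (or more carefully via the bilinear form), bound $\int_S\bar Q_R(H(u_1-u_0\mid w_0))$ in terms of $\mathcal{D}_0^2(u_0,u_1)$ plus a correction controlled by $\|w_0'\|_{L^4}^2\|w_1'-w_0'\|_{L^4}^2$ and hence by $\mathcal{D}_0^2 + \mathcal{D}_0^3$ (using $\|w_0'\|_{L^4}$ bounded, which follows from $\phi_0(u_0)\le M$ via Lemma~\ref{lem:complete1d}(iii)), and bound the quartic term $\int_S ((w_1'-w_0')^2)^2 = \|w_1'-w_0'\|_{L^4}^4 \le C\mathcal{D}_0^4(u_0,u_1)$ using \eqref{ineq:lowerbound} and the embedding $W^{1,2}(I)\subset\subset L^4(I)$. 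Collecting terms gives $\mathcal{D}_0^2(u_0,u_s)\le s^2(\mathcal{D}_0^2 + C\mathcal{D}_0^3 + C\mathcal{D}_0^4)(u_0,u_1) = (s\,\Phi^1(\mathcal{D}_0(u_0,u_1)))^2$, which is (i).

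For part (ii), I would use the bilinear expansion \eqref{expansion}: $\bar Q_W(G(u_s)) = \bar Q_W((1-s)G(u_0)+sG(u_1)) + \big(\bar Q_W(G(u_s)) - \bar Q_W((1-s)G(u_0)+sG(u_1))\big)$. The first term is convex in the argument, so $\int_S\bar Q_W((1-s)G(u_0)+sG(u_1)) \le (1-s)\int_S\bar Q_W(G(u_0)) + s\int_S\bar Q_W(G(u_1)) = 2(1-s)\phi_0(u_0)+2s\phi_0(u_1)$. For the remainder, one uses $G(u_s) - ((1-s)G(u_0)+sG(u_1)) = -\tfrac12 s(1-s)((w_1'-w_0')^2,0,0)$ (a standard computation: the quadratic term in the interpolation of $|w'|^2$), so the remainder is, after the bilinear expansion, bounded by $C s(1-s)\big(\|G(u_s)\|_{L^2}\,\|w_1'-w_0'\|_{L^4}^2 + \|w_1'-w_0'\|_{L^4}^4\big)$; using $\|G(u_s)\|_{L^2}\le C(\sqrt M + \mathcal{D}_0 + \mathcal{D}_0^2)$ (bound $\phi_0(u_s)$ crudely, or just bound $\|w_s'\|_{L^4}$) and \eqref{ineq:lowerbound} this is $\le C s(\sqrt M\,\mathcal{D}_0^2 + \mathcal{D}_0^3 + \mathcal{D}_0^4)(u_0,u_1) = 2s\,\Phi_M^2(\mathcal{D}_0(u_0,u_1))$ after dividing by $2$. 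This gives (ii). The main obstacle I anticipate is bookkeeping the $L^4$-versus-$L^2$ norms cleanly so that all error terms genuinely reduce to powers of $\mathcal{D}_0(u_0,u_1)$ with constants uniform over the sublevel set $\{\phi_0\le M\}$; in particular one must be careful that $\|G(u_s)\|_{L^2}$ — which controls the ``$\sqrt M$'' coefficient — is bounded using only $\phi_0(u_0)\le M$ and not $\phi_0(u_1)$, which is why the generalized convexity is asymmetric in $u_0$ and $u_1$.
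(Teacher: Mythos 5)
Your overall route is the same as the paper's: both arguments use the representation $\phi_0(u)=\tfrac12\int_S\bar Q_W(G(u))$, $\mathcal D_0^2(u_0,u_1)=\int_S\bar Q_R(G(u_0)-G(u_1))$ from Lemma~\ref{lemma:representation}, isolate the quadratic correction $\tfrac{s(1-s)}{2}(w_1'-w_0')^2$ in the first component along the linear interpolation, and control it by $\|(w_1'-w_0')^2\|_{L^2}\le C\,\mathcal D_0^2(u_0,u_1)$ via \eqref{ineq:lowerbound} and $W^{1,2}(I)\subset\subset L^4(I)$; your part (ii), including the observation that only $\phi_0(u_0)\le M$ enters (through $\|G(u_0)\|_{L^2}\le C\sqrt M$), is essentially the paper's proof. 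Part (i), however, does not work as written. The statement requires $\mathcal D_0^2(u_0,u_s)\le s^2\big(\mathcal D^2+C\mathcal D^3+C\mathcal D^4\big)$ with $\mathcal D=\mathcal D_0(u_0,u_1)$, i.e.\ the coefficient of $\mathcal D^2$ must be exactly $s^2$ and the constants independent of $M$ (this is what later gives $\Phi^1(t)/t\to1$, used in the proof of Lemma~\ref{Thm:representationlocalslope1}(i)). Your intermediate step bounds $\int_S\bar Q_R(H(u_1-u_0\,\vert\,w_0))$ by $\mathcal D^2$ plus a correction ``controlled by $\|w_0'\|_{L^4}^2\|w_1'-w_0'\|_{L^4}^2$'', with $\|w_0'\|_{L^4}$ bounded through $\phi_0(u_0)\le M$. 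That correction is of size $C(M)\,\mathcal D^2$, so collecting terms yields $(1+C(M))\,s^2\mathcal D^2+\dots$, not $s^2\mathcal D^2+\dots$; your final display in (i) does not follow from the estimate you actually state, and the $\Phi^1$ you would obtain is $M$-dependent with slope $>1$ at the origin. (Likewise, the crude inequality $\bar Q_R(a+b)\le 2\bar Q_R(a)+2\bar Q_R(b)$ doubles the leading term; the bilinear expansion you mention parenthetically is mandatory. There is also a harmless sign slip: with weight $w_0$ one has $G(u_s)-G(u_0)=s\,H(u_1-u_0\,\vert\,w_0)+\tfrac{s^2}{2}((w_1'-w_0')^2,0,0)$.)

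The correct bookkeeping — which is the paper's and needs neither $w_0'$ nor $\phi_0(u_0)\le M$ in part (i) — is to set $G_0^s:=\partial_1(y_s)_1+\tfrac12|w_s'|^2$ and $B:=\tfrac12(w_0'-w_1')^2$, use the exact identity $G_0^s-G_0^0=s\big(G_0^1-G_0^0-(1-s)B\big)$, and expand the quadratic form: $\int_S Q_R^0(G_0^s-G_0^0)=s^2\int_S Q_R^0(G_0^1-G_0^0)+s^2(1-s)^2\int_S Q_R^0(B)-2s^2(1-s)\int_S C_R^0\,(G_0^1-G_0^0)\,B$. The cross term is estimated by Cauchy--Schwarz with $\|G_0^1-G_0^0\|_{L^2}\le C\mathcal D$ (positivity of $Q_R^0$) and $\|B\|_{L^2}\le C\|w_0'-w_1'\|_{L^4}^2\le C\mathcal D^2$, giving $C\mathcal D^3$, while the last term gives $C\mathcal D^4$; the bending/twisting part of $\mathcal D_0^2$ is exactly quadratic along the interpolation and contributes $s^2$ times itself. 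With this replacement your part (i) closes, and together with your part (ii) the lemma follows.
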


\begin{proof}
	Let $M >0$, $u_0   \in \mathscr{S}^{1D}$ with $\phi_0(u_0) \leq M$, and $u_1 \in \mathscr{S}^{1D}$. For convenience, we first introduce some abbreviations and provide some preliminary estimates. We set $\mathcal{D}= \mathcal{D}_0 (u_0,u_1)$, $B_\diff= \tfrac{1}{2}(w_0'-w_1')^2$, and $G_0^s = \partial_1  (y_s)_1 \EEE + \tfrac{1}{2} |w_s'|^2$ for $s \in [0,1]$. Then, by  Sobolev embedding and \eqref{ineq:lowerbound} we find \EEE
	\begin{align}\label{convexityproofbdiff}
	\Vert B_\diff \Vert_{L^2(I)} \leq C \Vert w'_0-  w'_1 \Vert_{L^4(I)}^2 \leq C\Vert w_0- w_1 \Vert_{W^{2,2}(I)}^2 \leq C \mathcal{D}^2.
	\end{align} \EEE
	Then,  by  \EEE the positivity of $Q_R^0$ \EEE and the definition of $\mathcal{D}_0$ we derive
	\begin{align}\label{convexityproofGdiff}
	\Vert G_0^1 - G_0^0 \Vert_{L^2({\EEE S})}^2 \leq C \int_{\EEE S}  Q_R^0 \EEE (G_0^1 - G_0^0) \leq C \mathcal{D}^2.
	\end{align}
	Similarly, we observe by the definition of $\phi_0$, $Q_W^0>0$, \EEE and the fact that $\phi_0(u_0) \leq M$ that
	\begin{align} \label{convexityproofG_0}
	\Vert G_0^0 \Vert_{L^2({\EEE S})}^2 \leq C \int_{\EEE S}  Q_W^0 \EEE (G_0^0)\leq CM.
	\end{align}
	We now start with the proof of (i). First, we observe
	\begin{align*}
	\frac{1}{12} \int_I Q_R^1 \big((w_s'',\theta_s')-( w_0'', \theta_0')\big) = s^2 \frac{1}{12} \int_I Q_R^1 \big((w_1'',\theta_1')-( w_0'', \theta_0')\big).
	\end{align*}
	We will show that there exists $C>0$ independently \EEE of $s$ such that
	\begin{align}\label{convexityprooftoshowphi}
	\int_S Q_R^0 (G_0^s - G_0^0) \leq s^2 \int_S Q_R^0(G_0^1 - G_0^0) + C s^2 \mathcal{D}^3 + C s^2 \mathcal{D}^4
	\end{align}
	for $s \in [0,1]$. Then, recalling the definition of $\mathcal{D}_0$, (i) follows for the function $\Phi^1(t) = \sqrt{t^2 + Ct^3 + Ct^4}$. To show \eqref{convexityprooftoshowphi}, we obtain by an elementary expansion
	%\begin{align*}
	%\frac{1}{2} (w_s'^2 - w_0'^2 )= s \frac{1}{2} (w_1'^2 - w_0'^2) - s(1-s) B_\diff 
	%\end{align*}
	%and, thus,
	\begin{align}\label{convexityproofGsG0difference}
	G_0^s - G_0^0 = s (G_0^1 - G_0^0 - (1-s) B_\diff).
	\end{align}
	Then \EEE \eqref{quadraticform0d} \EEE yields
	\begin{align*}
	\int_S Q_R^0(G_0^s - G_0^0) =& s^2 \int_S  Q_R^0 (G_0^1 - G_0^0) + s^2(1-s) \int_S Q_R^0(B_\diff)- 2s^2(1-s) \int_{\EEE S} \BBB C_R^0 \EEE (G_0^1-G_0^0)B_\diff,
	\end{align*}
	from which we deduce \eqref{convexityprooftoshowphi}, using \eqref{convexityproofbdiff}, \eqref{convexityproofGdiff}, and the Cauchy-Schwarz \EEE inequality.
	
	We now show (ii) for the function $\Phi_M^2(t) = C \sqrt{M} t^2 + C t^3 + C t^4$ for some $C>0$. Due to convexity of $s \mapsto \int_{\EEE I} Q_W^1(w_s'',\theta_s')$, it suffices to show
	\begin{align}\label{convexityprooftoshowD}
	\int_S Q_W^0 (G_0^s) \leq (1-s) \int_S Q_W^0 (G_0^0) + s \int_S Q_W^0 (G_0^1) + C \sqrt{M} s \mathcal{D}^2 + C s \mathcal{D}^3 + C s^2 \mathcal{D}^4.
	\end{align}
	By \eqref{convexityproofGsG0difference} we have $G_0^s = (1-s) G_0^0 + s G_0^1 - s(1-s) B_\diff$, and then \EEE  an elementary expansion yields
	\begin{align*}
	(G_0^s)^2 =&	(1-s) (G_0^0)^2 + s (G_0^1)^2 \\&- (1-s)s (G_0^0-G_0^1)^2 - 2s(1-s)^2 G_0^0B_\diff - 2 s^2 (1-s) G_0^1 B_\diff + s^2 (1-s)^2 B_\diff^2.
	\end{align*}
	Thus, by taking the integral and using the Cauchy-Schwarz inequality we get \EEE 
	\begin{align*}
	\int_S Q_W^0 (G_0^s) \leq &(1-s) \int_S Q_W^0 (G_0^0)  + s \int_S Q_W^0 (G_0^1) \\
	&+ C s ( \Vert G_0^0 \Vert_{L^2(S)} + \Vert G_0^1 \Vert_{L^2(S)} ) \Vert B_\diff \Vert_{L^2(S)} + C s^2 \Vert B_\diff \Vert_{L^2(S)}^2.
	\end{align*}
	By using $\Vert G_0^1 \Vert_{L^2(S)} \leq \Vert G_0^0 \Vert_{L^2(S)} + \Vert G_0^1 - G_0^0 \Vert_{L^2(S)}$, \eqref{convexityproofbdiff}, \eqref{convexityproofGdiff}, and \eqref{convexityproofG_0} we get \eqref{convexityprooftoshowD}. This concludes the proof of (ii).
\end{proof}

We are now ready to prove  the representation of the local slope stated in Lemma \ref{Thm:representationlocalslope1}(i), \EEE which will help us to show that the local slope is weakly lower semicontinuous and a strong upper gradient.

\begin{proof}[Proof of Lemma \ref{Thm:representationlocalslope1}(i)]
	Let $\Phi^1$ and $\Phi^2_M$ be defined as in Lemma \ref{PhiMPhi} and note \EEE that $\lim_{t \to 0} \Phi^1(t)/t = 1$ and $\lim_{t \to 0} \Phi^2_M(t)/ t=0$. This and the property that $(a+b)^+ \leq a^+ + b^+$ for every $a,b \in \R$ yields
	\begin{align*}
	\vert \partial \phi_0 \vert_{\mathcal{D}_0}(y,w,\theta) =& \limsup\limits_{(\tilde y, \tilde w, \tilde \theta) \to (y,w,\theta)} \frac{\big(\phi_0(y,w,\theta) - \phi_0(\tilde y, \tilde w, \tilde \theta)\big)^+}{{\mathcal{D}_0}((y,w,\theta),(\tilde y, \tilde w, \tilde \theta))}\\
	 \leq & \limsup\limits_{(\tilde y, \tilde w, \tilde \theta) \to (y,w,\theta)} \EEE \frac{\big(\phi_0(y,w,\theta) - \phi_0(\tilde y, \tilde w, \tilde \theta) -\Phi_M^2({\mathcal{D}_0}((y,w,\theta),(\tilde y, \tilde w, \tilde \theta)))\big)^+}{ \Phi^1({\mathcal{D}_0}((y,w,\theta),(\tilde y, \tilde w, \tilde \theta)))}\\
	\leq & \sup\limits_{(y,w,\theta)\neq (\tilde y, \tilde w, \tilde \theta) \in \mathscr{S}^{1D}} \frac{\big(\phi_0(y,w,\theta) - \phi_0(\tilde y, \tilde w, \tilde \theta) -\Phi_M^2({\mathcal{D}_0}((y,w,\theta),(\tilde y, \tilde w, \tilde \theta)))\big)^+}{\Phi^1({\mathcal{D}_0}((y,w,\theta),(\tilde y, \tilde w, \tilde \theta)))}.
	\end{align*}
	To see the other inequality, \EEE we write  $u_0 = (y,w,\theta)$ and \EEE fix $u_1:=(\tilde y, \tilde w, \tilde \theta)$   \EEE   with $u_0 \neq u_1$. \EEE  Define \linebreak $u_s:= (1-s) u_0 + su_1$. By Lemma \ref{PhiMPhi} \EEE we obtain the inequality
	\begin{align*}
	\frac{(\phi_0(u_0)- \phi_0(u_s))^{ +}}{{\mathcal{D}_0}(u_0,u_s)}\geq \frac{(s\phi_0(u_0)- s \phi_0(u_1) -s\Phi_M^2({\mathcal{D}_0}(u_0,u_1)))^{ +}}{s\Phi^1({\mathcal{D}_0}(u_0,u_1))}. 
	\end{align*}
	In view of  Theorem \ref{lem:complete1d}(ii), \EEE   $u_s \to u_0$ as $s \to 0$ with respect to the topology induced by  $\mathcal{D}_0$,  \EEE i.e.,   by letting $s \to 0$ the left-hand side is smaller or equal to $\vert \partial \phi_0 \vert_{\mathcal{D}_0}(u_0)$.  Taking the supremum on the right-hand  side \EEE over all $u_1 \in \mathscr{S}^{1D}$, $u_1\neq u_0$, \EEE yields the lower bound for the local slope.
\end{proof}
%In contrast to Lemma \ref{lem:representationphi}, it is more difficult to derive the weak lower semicontinuity of the local slope. Since the metric occurs in the denominator of (\ref{def:slopes}), it seems that we need the reverse inequality in Lemma \ref{lem:representationphi} (i), which is false in general. However, the representation in Theorem \ref{Thm:representationlocalslope1} will solve this problem: Given a sequence $(z_k)_k \subset \mathscr{S}^{1D}$ and $z$, $u \in \mathscr{S}^{1D}$ such that $z_k \rightharpoonup z$, the additional dependence on the supremum allows us to define a recovery sequence $(u_n)_n$ such that the "reverse inequality" is true, more precisely, we have

Similarly to Lemma \ref{lem:mutual},  we construct a mutual recovery sequence as a key ingredient to show weak \EEE lower semicontinuity of slopes. \EEE
\begin{lemma}[Mutual recovery sequence in the one-dimensional setting]\label{lem:mutualrecovery}
	Let \ZZZ $(z_k)_k \subset \mathscr{S}^{1D}$ be a sequence such that $z_k \rightharpoonup z$ and $u \in \mathscr{S}^{1D}$\EEE. Then there \ZZZ exists \EEE a sequence $(u_k)_k \subset \mathscr{S}^{1D}$ \EEE such that
	\begin{itemize}
		\item[(i)] $\lim\limits_{k \to \infty} {\mathcal{D}_0}(z_k, u_k) = {\mathcal{D}_0}(z,u)$,
		\item[(ii)] $\phi_0(z) - \phi_0(u) = \lim\limits_{k \to \infty} (\phi_0(z_k) - \phi_0(u_k))$.
	\end{itemize}
\end{lemma}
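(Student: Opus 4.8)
The plan is to use the most naive ``shift by the difference'' construction. Writing $z_k = (y_k,w_k,\theta_k)$, $z=(y,w,\theta)$ and $u=(\tilde y,\tilde w,\tilde\theta)$, I would simply set $u_k := (y_k + \tilde y - y,\ w_k + \tilde w - w,\ \theta_k + \tilde\theta - \theta)$. The first point to verify is that $u_k \in \mathscr{S}^{1D}$: since the constraints defining $BN_{(\hat u_1,\hat u_2)}(S;\R^2)$ (the relations $e(\cdot)_{12}=e(\cdot)_{22}=0$ together with the prescribed traces) as well as the spaces $W^{2,2}_{\hat v}(I)$ and $W^{1,2}_0(I)$ are affine, and $y_k,y,\tilde y$ share the same boundary data, one has $\tilde y - y \in BN_{(0,0)}(S;\R^2)$ and hence $y_k + \tilde y - y \in BN_{(\hat u_1,\hat u_2)}(S;\R^2)$; the analogous statement holds for the two remaining slots.

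Next I would reduce both claims to the behaviour of the map $G$ from \eqref{def:HandG} by invoking the representations $\phi_0(\cdot) = \tfrac12\int_S \bar Q_W(G(\cdot))$ and $\mathcal{D}_0^2(\cdot,\cdot) = \int_S \bar Q_R\big(G(\cdot)-G(\cdot)\big)$ of Lemma~\ref{lemma:representation}. The decisive observation is that, by the choice of $u_k$, the last two components of $G(z_k)-G(u_k)$, namely $w_k'' - (w_k+\tilde w - w)'' = w''-\tilde w''$ and $\theta_k' - (\theta_k+\tilde\theta-\theta)' = \theta'-\tilde\theta'$, are independent of $k$, while the first component is $\partial_1 y_1 - \partial_1\tilde y_1 + \tfrac12\big(|w_k'|^2 - |(w_k+\tilde w - w)'|^2\big)$. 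Using that in one space dimension $W^{2,2}(I)\subset\subset C^1(\bar I)$, the weak convergence $z_k\rightharpoonup z$ in $(\mathscr{S}^{1D},\Vert\cdot\Vert_{can})$ forces $w_k'\to w'$, hence $(w_k+\tilde w - w)'\to\tilde w'$, uniformly on $\bar I$, so the nonlinear term converges uniformly to $\tfrac12(|w'|^2-|\tilde w'|^2)$. Thus $G(z_k)-G(u_k)\to G(z)-G(u)$ strongly in $L^2(S;\R^3)$; and since $G(z_k)\rightharpoonup G(z)$ weakly in $L^2(S;\R^3)$ (the first component weakly, because $\partial_1(y_k)_1\rightharpoonup\partial_1 y_1$ and $|w_k'|^2\to|w'|^2$; the other two weakly), it follows that $G(u_k) = G(z_k) - \big(G(z_k)-G(u_k)\big) \rightharpoonup G(u)$ weakly as well.

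With these two convergences at hand, (i) is immediate from continuity of the quadratic form $\bar Q_R$. For (ii) I would expand $\bar Q_W(G(z_k)) - \bar Q_W(G(u_k))$ via the identity \eqref{expansion} with bilinear form $\bar\C_W$, getting $\phi_0(z_k)-\phi_0(u_k) = \tfrac12\int_S\bar Q_W\big(G(z_k)-G(u_k)\big) + \int_S\bar\C_W\big[G(z_k)-G(u_k),\,G(u_k)\big]$. The ``square'' term passes to the limit by strong $L^2$-convergence of $G(z_k)-G(u_k)$, and the ``mixed'' term passes to the limit because it pairs a strongly convergent sequence against a weakly convergent one; reassembling via \eqref{expansion} applied to $G(z)$ and $G(u)$ yields exactly $\phi_0(z)-\phi_0(u)$, which is the claimed equality.

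I do not expect a serious obstacle. Unlike Lemma~\ref{lem:mutual}, which passes from the two- to the one-dimensional model and therefore requires the compatibility hypotheses \eqref{quadraticforms}/\eqref{eq:quadraticform:minimumnot0} to handle the minimization over the suppressed directions — and only produces an \emph{inequality} in the energy — here source and target live in the same space $\mathscr{S}^{1D}$, the shift ansatz works verbatim, and one even obtains an \emph{equality} of limits in (ii). The only mildly delicate point is the strong convergence of the quadratic term $|w_k'|^2$, which rests on the compact embedding $W^{2,2}(I)\subset\subset C^1(\bar I)$ available in dimension one (the two-dimensional analogue instead uses $W^{1,2}(S)\subset\subset L^4(S)$); everything else is routine.
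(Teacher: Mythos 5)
Your proposal is correct and follows essentially the same route as the paper: the identical shift ansatz $u_k=(y_k+\tilde y-y,\,w_k+\tilde w-w,\,\theta_k+\tilde\theta-\theta)$, the observation that the bending/twist differences and $\partial_1 y_1-\partial_1\tilde y_1$ are $k$-independent, strong convergence of the quadratic term $|w_k'|^2$ via a compact one-dimensional Sobolev embedding, and the expansion \eqref{expansion} combined with the strong--weak pairing for the energy difference. The only cosmetic differences are that you use $W^{2,2}(I)\subset\subset C^1(\bar I)$ where the paper uses $W^{1,2}(I)\subset\subset L^4(I)$, and that you phrase everything through $G$ and $\bar Q_W,\bar Q_R$ from Lemma~\ref{lemma:representation}; neither changes the substance.
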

\begin{proof}
	Let $z_k \EEE = \EEE (y_k,w_k, \theta_k)$ such that $z_k \rightharpoonup z = (y,w, \theta) \in \mathscr{S}^{1D}$ and \EEE consider \EEE $ u = (\tilde y, \tilde w, \tilde \theta) \in \mathscr{S}^{1D}$. We define the mutual recovery sequence by 
	$$u_k:=( \tilde y_k, \tilde w_k, \tilde \theta_k)= (y_k + \tilde y - y, w_k + \tilde w - w, \theta_k + \tilde \theta - \theta).$$Then, by the compact embedding $W^{1,2}(I) \subset \subset L^4(I)$, we have
	\begin{align*}
	\vert w_k^\prime \vert^2 -\vert \tilde w_k^\prime \vert^2  \to \vert w^\prime \vert^2 -\vert \tilde w^\prime \vert^2 \text{ in } L^2(I).
	\end{align*}
	Moreover, by construction it holds that $\partial_1 (y_k)_1 \EEE - \partial_1  (\tilde{y}_k)_1 = \partial_1 y_1 - \partial_1 \tilde y_1$, $w_k'' - \tilde w_k'' = w''- \tilde w''$, and $\theta_k' - \tilde \theta_k' = \theta' - \tilde \theta'$. This implies (i) since
	\begin{align*}
	\textstyle
	\lim\limits_{k \to \infty} {\mathcal{D}^2_0}((y_k,w_k, \theta_k),(\tilde y_k , \tilde w_k, \tilde \theta_k)) = \int_S Q_R^0(\partial_1 y_1 - \partial_1\tilde y_1 + \frac{\vert w^\prime \vert^2}{2} -\frac{\vert \tilde w^\prime \vert^2}{2} ) + \EEE \tfrac{1}{12} \EEE \int_I Q_R^1(\wprpr - \twprpr, \theta^\prime - \tilde \theta^\prime).
	\end{align*}
	We now address (ii). \EEE We \ZZZ only have weak convergence of $(\partial_1 \EEE (y_k)_1 \EEE + \frac{1}{2}\vert w_k^\prime \vert^2)_k$ in $L^2(S)$, but strong convergence of $(\partial_1 (\tilde{y}_k)_1 \EEE + \frac{1}{2}\vert \tilde w_k^\prime \vert^2 - \partial_1 (y_k)_1 + \frac{1}{2}\vert w_k^\prime \vert^2)_k$ in $L^2(S)$. Thus, by adapting the arguments \EEE in Step 3 of the proof \EEE of Lemma \ref{lem:mutual}, in particular using \eqref{expansion}, \EEE we find as $k \to \infty$ \EEE
	\begin{align*}
	&\int_S   \Big( Q_W^0\big(\partial_1 (y_k)_1 + \tfrac{\vert w_k^\prime \vert^2}{2}\big) - Q_W^0\big(\partial_1 (\tilde {y}_k)_1 + \tfrac{\vert \tilde w_k^\prime \vert^2}{2}\big)      \Big)\to \int_S \Big( Q_W^0\big(\partial_1 {y}_1 + \tfrac{\vert w^\prime \vert^2}{2}\big) - Q_W^0\big(\partial_1 \tilde {y}_1 + \tfrac{\vert \tilde w^\prime \vert^2}{2}\big)      \Big). 
	\end{align*}
	The observation that 
	\begin{align*}
	\int_I Q_W^1(w_k'' - \tilde w_k'', \theta'_k - \tilde \theta'_k) = \int_I Q_W^1(w'' - \tilde w'', \theta' - \tilde \theta')
	\end{align*}
	for every $k \in \N$ concludes the proof of (ii).
\end{proof}

 We now proceed with the proof of Lemma \ref{Thm:representationlocalslope1}(ii),(iii). \EEE

\begin{proof}[Proof of Lemma \ref{thm:stronguppergradient1d}(ii),(iii)]
	\BBB We first show (ii). \EEE As \EEE the local slope is a weak upper gradient in the sense of Definition \cite[Definition 1.2.2]{AGS} by \cite[Theorem  1.2.5]{AGS}, we only need to show that for an absolutely continuous curve $z\colon(a,b) \to \mathscr{S}^{1D}$ satisfying $\vert \partial \phi_0\vert_{\mathcal{D}_0}(z) \vert z '\vert_{\mathcal{D}_0} \in L^1(a,b)$ the curve $\phi_0 \circ z$ is absolutely continuous. It is  not  \EEE   restrictive to assume that $(a,b)$ is a bounded interval and  that \EEE the curve $z$ is extended by continuity to $[a,b]$. Thus, $\mathscr{S}^{1D}_z:= z([a,b])$ is compact and we define $\text{diam}(\mathscr{S}^{1D}_z):= \sup_{s,t \in [a,b]}  {\mathcal{D}_0}(z(s),z(t)) < +\infty$. Thus, by Lemma \ref{lem:complete1d}(ii) \EEE we find an $M>0$ such that $\phi_0(z(s)) \leq M$ for every $s \in [a,b]$. Since $\phi_0$ is \BBB ${\mathcal{D}_0}$-lower semicontinuous by \BBB Lemma \ref{lem:complete1d}(ii) and (iv)\EEE, the global slope
	\begin{align*}
	I_{\phi_0}(v):= \sup\limits_{v \neq w \in \mathscr{S}^{1D}_z} \frac{(\phi_0(v)-\phi_0(w))^+}{{\mathcal{D}_0}(v,w)}
	\end{align*}
	is a strong upper gradient with respect to $\mathscr{S}^{1D}_z$ by \cite[Theorem 1.2.5]{AGS}. Thus, it holds for all $a < s \leq t < b$ that
	\begin{align*}
	\vert \phi_0 (z(t)) -\phi_0(z(s)) \EEE \vert \leq \int_s^t I_{\phi_0}(z(r))\vert  z' \BBB \vert_{\mathcal{D}_0}(r) \, {\rm d}r,
	\end{align*} 
	\EEE see Definition \ref{main def2}(i). \EEE 	The claim follows once \EEE we bound \EEE  $I_{\phi_0}(z)\vert z'\vert_{\mathcal{D}_0}$ \EEE with an integrable function. To this end, we define the constants
	\begin{align*}
	C_1:= \sup\limits_{t \in [0,\text{diam}(\mathscr{S}^{1D}_z)]} \frac{\Phi^1(t)}{t} < +\infty \quad \text{ and } \quad
	C_2:= \sup\limits_{t \in [0,\text{diam}(\mathscr{S}^{1D}_z)]} \frac{\Phi^2_M(t)}{t} < +\infty,
	\end{align*}
	where $\Phi^1$ and $\Phi^2_M$ are  given in Lemma~\ref{thm:stronguppergradient1d}(i).
	Hence, for every $\EEE v \EEE \in \mathscr{S}^{1D}_z$ it holds that
	\begin{align*}
	I_{\phi_0}(v)
	\leq  &\sup\limits_{v \neq w \in \mathscr{S}^{1D}_z} \frac{\big(\phi_0(v)-\phi_0(w)- \Phi_M^2({\mathcal{D}_0}(v,w))\big)^+}{\Phi^1({\mathcal{D}_0}(v,w))}\frac{\Phi^1({\mathcal{D}_0}(v,w))}{{\mathcal{D}_0}(v,w)}  
	+\sup\limits_{v \neq w \in \mathscr{S}^{1D}_z} \frac{\big(\Phi_M^2({\mathcal{D}_0}(v,w))\big)^+ }{{\mathcal{D}_0}(v,w)} \\
	\leq & C_1 \vert \partial \phi_0 \vert_{\mathcal{D}_0}(v) + C_2.
	\end{align*}
	By using the assumption that $\vert \partial \phi_0 \vert_{\mathcal{D}_0} (z)\vert z' \vert_{\mathcal{D}_0} \in L^1(a,b)$ we get the absolute continuity.

	\EEE We now show (iii). \EEE Let $(y_k,w_k,\theta_k)_k \EEE \subset \EEE \mathscr{S}^{1D}$ be such that $(y_k,w_k,\theta_k) \rightharpoonup (y,w,\theta) \EEE$ for some $(y,w,\theta) \in \mathscr{S}^{1D}$. We let $\eps >0$ and define $z_k:= (y_k,w_k,\theta_k)$, $z := (y,w,\theta)$, and $M:= \phi_0(z)$. \EEE By Lemma~\ref{Thm:representationlocalslope1}(i) there exists  $u \in \mathscr{S}^{1D}$ such that
	\begin{align*}
	\vert \partial \phi_0\vert_{\mathcal{D}_0}(z) \leq \frac{\big(\phi_0(z)-\phi_0(u) - \Phi_M^2\big({\mathcal{D}_0}(z,u)\big) \big)^+}{\Phi^1\big({\mathcal{D}_0}(z,u)\big)} + \eps.
	\end{align*}
	Let $(u_k)_k \EEE \subset \EEE \mathscr{S}^{1D}$ be the sequence given by Lemma \ref{lem:mutualrecovery}. Since $\Phi^1$ and $\Phi^2_M$ are continuous, we obtain
	\begin{align*}
	\vert \partial \phi_0\vert_{\mathcal{D}_0}(z) \leq &\liminf\limits_{k \to \infty} \frac{\big(\phi_0(z_k)-\phi_0(u_k) - \Phi_M^2\big({\mathcal{D}_0}(z_k,u_k) \big)\big)^+}{\Phi^1\big({\mathcal{D}_0}(z_k,u_k)\big)} + \eps \leq  \liminf\limits_{k \to \infty} \vert \partial \phi_0\vert_{\mathcal{D}_0}(z_k)+\eps,
	\end{align*}
	 where the last step follows again by Lemma \ref{Thm:representationlocalslope1}(i). The statement \EEE follows by sending $\eps \to 0$.
\end{proof}

We close this section by  proving \EEE  the fine representation for the local slope given in  Lemma \ref{thm:sloperepresentation}. \EEE

\begin{proof}[Proof of Lemma \ref{thm:sloperepresentation}]\label{Proof of Lemma thm:sloperepresentation}
	To simplify the notation, we will write $(\tilde y, \tilde w, \tilde \theta) \to (y, w, \theta)$ instead of \linebreak $\mathcal{D}_0((\tilde y, \tilde w, \tilde \theta),(y,w, \theta)) \to 0$. \BBB Recall the definitions in \eqref{def:HandG} and \eqref{eq:extendedquadraticform}. \EEE
	The embedding $W^{1,2}(I) \subset \subset L^4(I)$ yields $\Vert G(y,w,\theta) - G(\tilde y, \tilde w, \tilde \theta)\Vert_{L^2(S; \R^3)} \to 0$ as $(\tilde y, \tilde w, \tilde \theta) \to (y, w, \theta)$. Then, \BBB by the positivity of $\bar{Q}_R$ \EEE we get
	\begin{align*}
	\frac{\int_S \bar Q_W(G(y,w,\theta)-G(\tilde{y}, \tilde w, \tilde \theta))}{\big(\int_S \bar Q_R(G(y,w,\theta)-G(\tilde{y}, \tilde w, \tilde \theta))\big)^{1/2}} \leq C \frac{\Vert G(y,w,\theta)-G(\tilde y, \tilde w, \tilde \theta) \Vert_{L^2(S;\R^3)}^2}{\Vert G(y,w,\theta)-G(\tilde y, \tilde w, \tilde \theta) \Vert_{L^2(S;\R^3)}} \to 0.
	\end{align*}
	%	In addition, the identity 
	%	\begin{align*} 
	%	&\frac{1}{2} \bar Q_W\big(G(y,w,\theta)\big)-\bar Q_W\big(G(\tilde y,\tilde w, \tilde \theta)\big) \\
	%	= &\bar \C_W[G(y,w,\theta),G(y,w,\theta)-G(\tilde y,\tilde w, \tilde \theta)] - \frac{1}{2} \bar Q_W\big(G(y,w,\theta)-G(\tilde y,\tilde w, \tilde \theta)\big)
	%	\end{align*}
	 This along with \EEE the expansion \eqref{expansion} and  Lemma \ref{lemma:representation} yields \EEE
	\begin{align*}
	\vert \partial \phi_0\vert_{\mathcal{D}_0}(y,w, \theta)&= \limsup\limits_{(\tilde y, \tilde w, \tilde \theta) \to (y, w, \theta)} \frac{\big( \int_S \frac{1}{2} \bar Q_W(G(y,w,\theta))-  \frac{1}{2}\bar Q_W(G(\tilde y,\tilde w, \tilde \theta)) \big)^+ }{\big(\int_S \bar Q_R(G(y,w,\theta)-G(\tilde{y}, \tilde w, \tilde \theta))\big)^{1/2}} \nonumber \\
	&= \limsup\limits_{(\tilde y, \tilde w, \tilde \theta) \to (y, w, \theta)} \frac{\big( \int_S \bar \C_W[G(y,w,\theta),G(y,w,\theta)-G(\tilde y,\tilde w, \tilde \theta)] \big)^+ }{\big(\int_S \bar Q_R(G(y,w,\theta)-G(\tilde{y}, \tilde w, \tilde \theta))\big)^{1/2}} \nonumber \\
	&= \limsup\limits_{(\tilde y, \tilde w, \tilde \theta) \to (y, w, \theta)} \frac{\big( \int_S \bar \C_W[G(y,w,\theta),H(y-\tilde y, w- \tilde w, \theta - \tilde \theta \vert w) - \frac{1}{2}((w'-\tilde w')^2,0,0)] \big)^+ }{\big(\int_S \bar Q_R(G(y,w,\theta)-G(\tilde{y}, \tilde w, \tilde \theta))\big)^{1/2}} ,
	\end{align*}
	where the last equality follows from (\ref{eq:HandG}). Due to \eqref{ineq:lowerbound}, \eqref{eq:HandG}, \EEE  and Lemma \ref{lemma:representation},  we find that  
	\begin{align*} 
	\frac{\int_S \bar Q_R(G(y,w,\theta)-G(\tilde{y}, \tilde w, \tilde \theta))}{\int_S \bar Q_R(H(y-\tilde y, w- \tilde w, \theta - \tilde \theta \vert w))} \to 1, \quad \quad  \frac{\int_S (w'-\tilde w')^2}{\big(\int_S \bar Q_R(G(y,w,\theta)-G(\tilde{y}, \tilde w, \tilde \theta))\big)^{1/2}} \to 0 \EEE
	\end{align*}
	as $(\tilde y, \tilde w, \tilde \theta) \to (y, w, \theta)$. \EEE
	Thus, we obtain
	\begin{align*}
	\vert \partial \phi_0\vert_{\mathcal{D}_0}(y,w, \theta)&=\limsup\limits_{(\tilde y, \tilde w, \tilde \theta) \to (y, w, \theta)} \frac{\big( \int_S \bar \C_W[G(y,w,\theta),H(y-\tilde y, w- \tilde w, \theta - \tilde \theta \vert w)] \big)^+ }{\big(\int_S \bar Q_R(H(y-\tilde y, w- \tilde w, \theta - \tilde \theta \vert w))\big)^{1/2}}.
	\end{align*}
	We introduce the space of test functions $\mathcal{P}:= BN_{(0,0)}(S,\R^2)\times W_0^{2,2}(I) \times W_0^{1,2}(I)$. Since the operator $H$ is linear, we can simplify this expression by substitution with sequences that converge to $0$.  Moreover, as the enumerator and denominator are \BBB positively homogeneous of degree one, \EEE  we derive the representation 
	\begin{align}
	\vert \partial \phi_0\vert_{\mathcal{D}_0}(y,w, \theta)&=\sup\limits_{ 0\neq (\hat y, \hat w, \hat \theta) \in \mathcal{P}} \frac{\big( \int_S \bar \C_W[G(y,w,\theta),H(\hat y, \hat w, \hat \theta \vert w)] \big)^+ }{\big\Vert \sqrt{\bar \C_R}H(\hat y, \hat w, \hat \theta \vert w)\big\Vert_{L^2(S;\R^3)}}, \label{slopeproof}
	\end{align}
	where we have used \EEE the definition \EEE of $\sqrt{\bar \C_R} $. We now want to show that the supremum is attained by considering the minimization problem
	\begin{align*}
	\min\limits_{(\bar y, \bar w,\bar \theta) \in \mathcal{P}}\mathcal{F}(\bar y,\bar w,\bar \theta),
	\end{align*}
	where
	\begin{align*}
	\mathcal{F}(\bar y, \bar w,\bar \theta):= \frac{1}{2} \int_S \Big\vert \sqrt{\bar \C_R} H(\bar y, \bar w,\bar \theta \vert w)\Big\vert^2 - \int_S \bar \C_W[G(y,w,\theta),H(\bar y, \bar w,\bar \theta \vert w)].
	\end{align*}
	The existence of a solution can be guaranteed by the direct method of the calculus of variations. The functional $\mathcal{F}$ is weakly lower semicontinuous as $\sqrt{\bar \C_R}$ and $H$ are linear operators and $\vert \cdot \vert^2$ is convex. To show coercivity, we consider a constant $C>0$ such that $\mathcal{F}(\bar y, \bar w,\bar \theta) \leq C$. Since $\bar \C_R[H(\bar y, \bar w,\bar \theta \vert w), H(\bar y, \bar w,\bar \theta \vert w)] \ge \EEE c \vert H(\bar y, \bar w,\bar \theta \vert w) \vert^2$ by the positivity of $\bar{Q}_R$, \EEE we obtain by Cauchy-Schwarz $$\Vert H(\bar y, \bar w,\bar \theta \vert w) \Vert_{L^2(S;\R^3)}\leq C,$$ where $C$ depends on $y,w$, and $\theta$. Arguing similarly to the proof of Lemma \ref{lem:complete1d}(ii), we find that
	$	\Vert \bar w \Vert_{W^{2,2}(I)} \leq C$, $\Vert \bar \theta \Vert_{W^{1,2}(I)} \leq C$,  and 
	$\Vert \bar y \Vert_{W^{1,2}(S; \R^2)} \leq C$.
	%\begin{align*}
	%\Vert \bar w \Vert_{W^{2,2}(I)} \leq C_{G(y,w,\theta)},\quad \Vert \bar \theta \Vert_{W^{1,2}(I)} \leq C_{G(y,w,\theta)}
	%\end{align*}
	%and
	%\begin{align*}
	%\Vert \bar y \Vert_{W^{1,2}(S, \R^2)} &\leq C \Bigg( \int_S \vert \partial_1 \bar y_1 + \bar w' w' \vert \Bigg)^\frac{1}{2} + C \Vert \bar w' \Vert_{L^4(I)} \Vert w' \Vert_{L^4(I)}\\
	%&\leq C_{G(y,w,\theta),w}.
	%\end{align*}
	Thus, there exists a unique minimizer $( y_*, w_*, \theta_*) \in \mathcal{P}$. By computing the Euler-Lagrange equations, we observe that the minimum satisfies
	\begin{align*}
	\int_S \sqrt{\bar \C_R} H( y_*, w_*, \theta_* \vert w) \cdot \sqrt{\bar \C_R} H( \phi_y, \phi_w, \phi_\theta \vert w) - \int_S \bar \C_W[G(y,w,\theta),H(\phi_y,\phi_w,\phi_\theta \vert w)]
	\end{align*} 
	for all $(\phi_y, \phi_w, \phi_\theta) \in \mathcal{P}$. This equation can also be formulated as	\begin{align}
	\int_S {\mathcal{L}}(y,w,\theta)\cdot H(\phi_y,\phi_w,\phi_\theta\vert w) = 0\label{property:L}
	\end{align}
	for all $(\phi_y, \phi_w, \phi_\theta) \in \mathcal{P}$, where we define the operator ${\mathcal{L}}$ by
	\begin{align*}
	{\mathcal{L}}(y,w,\theta):= \bar \C_RH(y_*,w_*,\theta_*\vert w) - \bar \C_WG(y,w,\theta).
	\end{align*}
	By \eqref{def:HandG} and the regularity of the functions, we find
	${\mathcal{L}}(y,w,\theta) \in L^2(S; \R^3)$.	By \eqref{slopeproof}, \eqref{property:L}, and the definition of ${\mathcal{L}}$ we then get
	\begin{align*}
	\vert \partial \phi_0\vert_{\mathcal{D}_0}(y,w,\theta) &\geq \frac{\int_S (\bar \C_WG(y,w,\theta) + {\mathcal{L}}(y,w,\theta))\cdot H(y_*, w_*, \theta_* \vert w)}{\Big\Vert \sqrt{\bar \C_R}H( y_* , w_*, \theta_* \vert w)\Big\Vert_{L^2(S;\R^3)}}\\
	&= \frac{\int_S \sqrt{\bar \C_R}^{-1}(\bar \C_WG(y,w,\theta) + {\mathcal{L}}(y,w,\theta))\cdot\sqrt{\bar \C_R}H(y_*, w_*, \theta_* \vert w)}{\Big\Vert \sqrt{\bar \C_R}H( y_* , w_*, \theta_* \vert w)\Big\Vert_{L^2(S;\R^3)}}\\
	&= \Big\Vert \sqrt{\bar \C_R}H( y_* , w_*, \theta_* \vert w)\Big\Vert_{L^2(S; \EEE \R^3)} = \Big\Vert \sqrt{\bar \C_R}^{-1}(\bar \C_W G(y,w,\theta) + {\mathcal{L}}(y,w, \theta))\Big\Vert_{L^2(S;\R^3)}.
	\end{align*}
	On the other hand, by a similar argument, in view of \eqref{slopeproof} and \eqref{property:L}, we find
	\begin{align}
	\vert \partial \phi_0\vert_{\mathcal{D}_0}(y,w,\theta) &= \sup\limits_{ 0\neq (\hat y,\hat w, \hat \theta)\in\mathcal{P}} \frac{\int_S (\bar \C_WG(y,w,\theta) + {\mathcal{L}}(y,w,\theta)) \cdot H(\hat y, \hat w, \hat \theta \vert w)}{\Big\Vert \sqrt{\bar \C_R}H(\hat y , \hat w, \hat \theta \vert w)\Big\Vert_{L^2(S;\R^3)}}\nonumber \\
	&\leq \Big\Vert \sqrt{\bar \C_R}^{-1}(\bar \C_W G(y,w,\theta) + {\mathcal{L}}(y,w, \theta))\Big\Vert_{L^2(S;\R^3)}, \nonumber
	\end{align}
	where we again distributed $\sqrt{\bar \C_R}$ suitably to the two terms and used the Cauchy-Schwarz inequality. This concludes the proof.
\end{proof}

%--------------------------------------------------------------------------

%--------------------------------------------------------------------------
%--------------------------------------------------------------------------
 \typeout{References}

\end{document}